\documentclass[a4paper,reqno,12pt]{amsart} 
   % AMS-classes:   amsart, amsbook, amsproc - disse classes
   %                loader amsmath, amsgen, amstext, amsbsy, 
   %                amsopn, amsfonts og amsthm automatisk og tager derfor
   %                options til amsmath
   % LaTeX-classes: article, book, report, letter, slides, proc
   %
   % Alle disse classes (og mange andre) er beskrevet paa WWW-siden:
   % <a href="file://localhost/usr/unic/share/texmf/1.0/doc/helpindex.html">The TeX system</a>

%-------------Hvilke pakker vil vi/jeg bruge------------------------------------b

%\usepackage{vmargin}
%\setpapersize{A4}
        %Man kan saa selv saette marginener. Format:
        %\setmarginsrb{leftmargin}{topmargin}{rightmargin}{bottommargin}{headheight}{headsep}{footheight}{footskip}
%        \setmarginsrb{2.5cm}{1.5cm}{2.5cm}{2cm}{1cm}{.5cm}{2cm}{2cm}

%\usepackage{times} % avant|bookman|chancery|courier|helvet|mathptm|newcent|palatino|pifont|times|utopia       
   %   Hvis du vil have andre fonte end standard TeX fonte. 
   %   Jeg kan isaer anbefale times og palatino fontene.

\usepackage{amsmath} 
   %   (skal altid loades hvis man vil lave
   %   noget som helst matematik)

    %\usepackage{amsbsy}      
   %   Skaffer adgang til "fede" symboler

%\usepackage{amsfonts}   
%   %   Skaffer adgang til de specielle fonts \mathbb \frak m.fl.
% dlf: loades automatisk af amssymb, 

\usepackage{amssymb}      
   %   flere symboler

%\usepackage{eufrak}      
   %   Giver adgang til gotiske bogstaver: {\frak a} = gotisk a
  % dlf: amssymb giver ogs? \mathfrak

%\usepackage{eucal}   
   %   \mathcal bliver anderledes   
\usepackage{yhmath}% http://ctan.org/pkg/yhmath
\usepackage{mathdots}% http://ctan.org/pkg/mathdots
\usepackage{MnSymbol}
%\usepackage{amsopn}      
   %   Pakke til at deklarere nye operatorer

%\usepackage{upref}      
   %   Referencer kommer altid til at staa oprejst
   %   selvom det omgivende tekst evt. er i kursiv
   %   inkluderes helst ikke; giver fejlmelding som
   %   man dog kan ignorere.

%\usepackage{amsxtra}    
   %   Skaffer ekstra sjaeldent brugte tegn
   %   inkluderes kun i noedstilfaelde!!

%\usepackage{amscd}      
   %   Kommutative diagrammer med \begin{CD}..\end{CD}
   %   Meget smart!!
   %   Eksempel: 
   %\begin{equation}
   %\begin{CD}
   %   A @>i>>   B \\   
   %   @VjVV      @VVkV \\
   %   C   @>>m>   D
   %\end{CD}
   %\end{equation}

% dlf: denne pakker er der ikke nogen der bruger mere

%\usepackage{graphicx}
\usepackage{amsthm}      
   %   Skaffer proof-environment + \newtheorem
   %   Dvs. nye theorem strukturer defineres med
   %   \newtheorem{lem}{Lemma} og kan saa kaldes
   %   med 
   %   \begin{lem}....
   %   \begin{proof} ...
   %   \end{proof}
   %   \end{lem}

\usepackage{tikz,amsmath}
\usetikzlibrary{arrows}

\usepackage{epsfig}      
   %   Goer det muligt at inkludere eps-filer
   %   lavet i tegne/male-programmer

%\usepackage{amstext}      
   %   Skaffer \text{} saadan at man inde midt i noget matematik kan skrive
   %   noget decideret text: \text{og der med ogsaa }

% dlf: loades automatisk af amsmath, som er er autoloaded af amsart

\usepackage{graphicx}
\usepackage[normalem]{ulem}
\usepackage[all,line,
%dvips % dlf, ikke noedvendig mere, kraever nyere LaTeX installation
]{xy} 
\CompileMatrices %goer at xy-matricerne koerer hurtigere.
%\xymatrixcolsep{.5pc} % variation af sojle mellemrum     
   %   Diagram tegneprogram
   %   eksempel:
   %   $$
   %   \begin{xymatrix}{
   %   X \times_k Y \ar[r] \ar[d] &   Y \ar[d] \\
   %   X \ar[r]   &   k}
   %   \end{xymatrix}
   %   $$
   %   Se dokumentationen paa WWW siden
   %   <a href="http://www.diku.dk/research-groups/topps/personal/kris/Xy-pic.html">XY-Pic</a>

%\usepackage[danish]{babel} 
   %   Til naar man skal skrive dansk -
   %   datoer o.s.v. skrives saa i dansk format
   %   Hvis du har faaet anbragt de danske bogstaver paa
   %   tastaturet (f.eks. ved at indfoeje linierne: (uden % tegnet)
   %   xmodmap -e " keysym F5 = ae AE"
   %   xmodmap -e " keysym F6 = oslash Ooblique"
   %   xmodmap -e " keysym F7 = aring Aring"
   %   i din <code><a href="file://localhost/users/shave/.login">.login</a></code> fil), 

%-------------Hvilke options----------------------------------------------------

%options til amsmath, eksempel: \usepackage[reqno]{amsmath}
   %   reqno   ligningsnumre til hoejre
   %   leqno   ligningsnumre til venstre
      
   %   Der er flere packages/options - se AMS-dokumentation -
   %   <a href="file://localhost/usr/unic/share/texmf/1.0/doc/ams/amslatex/amsldoc.dvi">AMS-LaTeX Version 1.2 User's Guide (AMS-1995)</a>
   %   God introduktion - laes den! og forgaengeren
   %   <a href="http://www.mi.aau.dk/LocalDoc/TeX/amslatex.ps">AMS-LaTeX Version 1.1 User's Guide (AMS-1991)</a>

%---------------Hvilke makroer vil vi bruge; eksempler -------------------------

%  Groups, rings, etc.

   \newcommand{\Aff}{{\operatorname{Aff}}}
 
%  Frequently used functions, functors, etc.

   \newcommand{\Hom}{\operatorname{Hom}}

\newcommand{\Ad}{\operatorname{Ad}}

\newcommand{\id}{\operatorname{id}} 
\newcommand{\Aut}{\operatorname{Aut}}

\newcommand{\Span}{\operatorname{Span}}

 \newcommand{\supp}{\operatorname{supp}}

 \newcommand{\tr}{\operatorname{tr}}

\newcommand{\Br}{\operatorname{Br}}

% \newcommand{\Span}{\operatorname{Span}}

%  Shortcuts for finite sums and products

%\DeclareMathOperator{\supp}{supp}

%  Definer evt. selv flere paa samme maade,
%  eller slet de som du ikke skal bruge

%----------Saetnings strukturer og nummerering af disse-------------------------

%  Hvilke saetninger, lemmaer etc. og hvordan
%  Paa denne maade faar de alle en faelles nummerering:
   \theoremstyle{plain}%default
   \newtheorem{thm}{Theorem}[section]
   
   \newtheorem{lemma}[thm]{Lemma}  
   \newtheorem{cor}[thm]{Corollary}
   \theoremstyle{definition}

   \theoremstyle{remark}
   \newtheorem{obs}[thm]{Observation}
   \newtheorem{remark}[thm]{Remark}

\newtheorem{poem}[thm]{Additional properties}

%   Paa denne maade faar de hver sin taeller:
%   \theoremstyle{plain}%default
%   \newtheorem{thm}{Theorem}
%   \newtheorem{prop}{Proposition}
%   \newtheorem{lemma}{Lemma}
%   \newtheorem{cor}{Corollary}
%   \theoremstyle{definition}
%   \newtheorem{defn}{Definition}
%   \newtheorem{example}{Example}
%   \newtheorem{exerc}{Exercise}
%   \theoremstyle{remark}
%   \newtheorem{obs}{Observation}
%   \newtheorem{remark}{Remark}
%   \numberwithin{thm}{section}
%   \numberwithin{lemma}{section}
%   \numberwithin{cor}{section}
%   \numberwithin{defn}{section}
%   \numberwithin{example}{section}
%   \numberwithin{prop}{section}
%   \numberwithin{exerc}{section}
%   \numberwithin{obs}{section}
%   \numberwithin{remark}{section}

%   Et par makroer til at referere til disse saetninger etc.
%   Saa kan du skrive: jvf. \refthm{yndlings_label} har vi....

\usepackage{mdframed,xcolor}

\definecolor{mybgcolor}{gray}{0.8}
\definecolor{myframecolor}{rgb}{.647,.129,.149}

\mdfdefinestyle{mystyle}{
  usetwoside=false,
  skipabove=0.6em plus 0.8em minus 0.2em,
  skipbelow=0.6em plus 0.8em minus 0.2em,
  innerleftmargin=.25em,
  innerrightmargin=0.25em,
  innertopmargin=0.25em,
  innerbottommargin=0.25em,
  leftmargin=-.75em,
  rightmargin=-0em,
  topline=false,
  rightline=false,
  bottomline=false,
  leftline=false,
  backgroundcolor=mybgcolor,
  splittopskip=0.75em,
  splitbottomskip=0.25em,
  innerleftmargin=0.5em,
  leftline=true,
  linecolor=myframecolor,
  linewidth=0.25em,
}

\newmdenv[style=mystyle]{important}

% fyldtekst
\usepackage{lipsum}

%   Og tilsvarende hvis det skal vaere paa dansk:
%   (Faelles taeller)
%   \newenvironment{DKproof}{\proof[\DKproofname]}{\endproof}
%   \newcommand{\DKproofname}{Bevis}
%   \theoremstyle{plain}%default
%   \newtheorem{thm}{S\ae tning}[section]
%   \newtheorem{prop}[thm]{Proposition}
%   \newtheorem{lemma}[thm]{Lemma}
%   \newtheorem{cor}[thm]{Korollar}
%   \theoremstyle{definition}
%   \newtheorem{exerc}[thm]{\O velse}s
%   \newtheorem{defn}[thm]{Definition}
%   \newtheorem{example}[thm]{Eksempel}
%   \theoremstyle{remark}
%   \newtheorem{obs}[thm]{Observation}
%   \newtheorem{remark}[thm]{Bem\ae rkning}

%   \newcommand{\refthm}[1]{S\ae tning~\ref{#1}}
%   \newcommand{\refprop}[1]{Proposition~\ref{#1}}
%   \newcommand{\reflemma}[1]{Lemma~\ref{#1}}
%   \newcommand{\refcor}[1]{Korollar~\ref{#1}}
%   \newcommand{\refdefn}[1]{Definition~\ref{#1}}
%   \newcommand{\refremark}[1]{Bem\ae rkning~\ref{#1}}
%   \newcommand{\refexample}[1]{Eksempel~\ref{#1}}

%   Kompabilitet med amslatex pf-environment
%   \newenvironment{pf}{\proof[\proofname]}{\endproof}
%   \newenvironment{pf*}[1]{\proof[#1]}{\endproof}

%   Ligningsnumre som (section-nr.eq-nr)
   \numberwithin{equation}{section}

%   Hvis man vi have "2.3 Theorem" i stedet for "Theorem 2.3"
%   \swapnumbers

%------------------Diverse hints og tricks--------------------------------------

%   Hvis man har noget LaTeX2e skrevet af MapleV4 (bemaerk: version 4) kan det
%   inkluderes paa foelgende maade:
%   \usepackage{/vol/packages/MapleV4/etc/inputs2e/maple2e}
%   \DefineParaStyle{Maple Output}
%   \DefineParaStyle{Warning}
%   \DefineCharStyle{2D Math}
%   \DefineCharStyle{2D Output}

%   \input{maple-filen.tex}
%   Husk at fjerne toppen (til og med begin{document}) og bunden (\end{document} 
%   af filen som du inkluderer.

%   \makeindex
%   Hvis der skal laves et index, inkluder da "makeidx".
%   Bemaerk: Der kan IKKE laves index i "article" og "amsart" styles.
%   Dette kan dog omgaas:
%      For at faa indexet printet ud, skal man enten vaelge optionen
%   amsbook eller ogsaa skal idx-filen som genereres af \makeindex
%   compileres som "makeindex fil.idx -p X", hvor X angiver det sidenr
%   hvor indexet skal begynde. Derefter TeX'es indexgeneratorfilen
%   Indexgenerator.tex (se forklaring i denne). Derved faas det oenskede
%   output.

%   \showidx
%   Hvis der laves index er det rart i skrive-processen
%   at have indexene staaende i marginen - dette klarer "showidx".

%\usepackage{showkeys}
%   Denne kommando viser alle label's skrevet
%   ud i teksten (i box) - rart til kladder.

        %Lav evt. dato-teksten om, eller undlad helt dato.
        \date{\today}
        %\thanks{My warmest thanks to my mom}
        %\email{matkt@imf.au.dk}
        %\address{Institut for matematiske fag, Ny Munkegade, 8000 Aarhus C, Denmark}
%---------Nu til forsiden-------------------------------------------------------

\title[Possible temperatures]{The possible temperatures for flows on a simple AF algebra}
\author{Klaus Thomsen}
%Nu kommer nogle linier som kun er aktuelle hvis du bruger
%'amsart' style

        %\renewcommand{\datename}{Version: }
        %Lav evt. dato-teksten om, eller undlad helt dato.
        %\date{\today}
        %\thanks{}
        %\email{matkt@imf.au.dk}
        %\address{Institut for matematiske fag, Ny Munkegade, 8000 Aarhus C, Denmark}
        %\dedicatory{Dedicated to S\o ren Have Hansen on the occasion of his birth}
        %\keywords{infinite matrix, positive eigenvalue, positive eigenvector}
        %\subjclass[2010]{15B48, 05C63}
        %\commby{Jens Peter S\ae r}

%Slut paa  'amsart' style specifikke ting.

\DeclareMathOperator\coker{coker}

\date{\today}

\email{matkt@math.au.dk}
\address{Department of Mathematics, Aarhus University, Ny Munkegade, 8000 Aarhus C, Denmark}

\begin{document}

\maketitle

\section{Introduction}

A flow on a $C^*$-algebra $A$ is a continuous one-parameter group $\alpha = \left(\alpha_t\right)_{t \in \mathbb R}$ of automorphisms of $A$. In quantum statistical mechanics such a flow is sometimes interpreted as the time evolution of the observables of a physical system and the equilibrium states of the system are then given by states $\phi$ of $A$ that satisfy the trace-like condition 
$$
\phi(ab) = \phi(b\alpha_{i\beta}(a))
$$ 
for all $a,b \in A$ with $a$ analytic for $\alpha$, \cite{BR}. The number $\beta$ for which this holds is interpreted as the inverse temperature of the system in the state given by $\phi$, and $\phi$ is said to be a $\beta$-KMS state for $\alpha$. The $C^*$-algebra $A$ of observables in such a model is often a UHF algebra and it is therefore of interest to determine the KMS states and the possible inverse temperatures which can occur for flows on a UHF algebra. It follows from work by Powers and Sakai in the 70's, \cite{PS}, that when the flow is approximately inner the set of possible inverse temperatures is the whole real line $\mathbb R$, and Powers and Sakai conjectured that all flows on a UHF algebra are approximately inner. That this is not the case was proved by Matui and Sato in \cite{MS}, following work by Kishimoto on the AF algebra case, \cite{Ki2}. It is therefore an open question which sets of real numbers can occur as the set of inverse temperatures for a flow on a UHF algebra. The set is closed for any flow and since a UHF algebra has a unique trace state which is automatically a $0$-KMS state, it must contain zero. These are the only known restrictions. It is the purpose with this paper to show that it is also the only restrictions, at least when one only considers non-negative $\beta$, and not only for flows on UHF algebras, but in fact on all simple infinite dimensional unital AF algebras.

In the examples of Kishimoto, and also in the examples of Matui and Sato, the set of possible inverse temperatures is as small as it can be, consisting only of $0$, cf. Remark \ref{11-11-20} below. Here we will show that a variant of the method developed in \cite{Th5} can be used to construct a flow on any infinite dimensional simple unital AF algebra such that the set of inverse temperatures is any given lower bounded closed set of real numbers containing zero. For this we use a method which is a natural extension of ideas and methods from work by Bratteli, Elliott, Herman and Kishimoto in \cite{BEH}, \cite{BEK},\cite{Ki2}, and by Matui and Sato in \cite{MS}. As in these papers we depend on results from the classification of simple $C^*$-algebras. Besides the classification of AF algebras, which was fundamental already in \cite{BEH}, we apply the recent work by Castillejos, Evington, Tikuisis, White and Winter in \cite{CETWW}, where the classification results we use in turn are based on work by Elliott, Gong, Lin and Niu, \cite{GLN}, \cite{EGLN}, among others. However, when the algebra in question is UHF there is an alternative route to the desired result which we describe Remark \ref{14-12-20gx}.
 
The paper contains some material which appeared in \cite{Th5}; this is because I now consider \cite{Th5} a forerunner to the present paper and it will not be submitted for publication. The results of the present paper answer Question 2.6 in \cite{Th5} positively, but Question 6.2 in \cite{Th4} only partially. What remains is to determine if it is possible to remove the condition that the set is bounded below. This restriction is annoying and probably not necessary. However, in applications and also to many mathematicians, the (inverse) temperatures are non-negative and then the restriction to the lower bounded case is of no significance.

 In relation to this, one should observe the recent work by Christensen and Vaes, \cite{CV}, in which they connect the theory of KMS states on $C^*$-algebras to the theory of group actions on spaces. They also provide two examples of simple unital $C^*$-algebras on which there are flows with arbitrary KMS spectra. While it is certainly premature to suggest that this applies to all infinite dimensional unital simple $C^*$-algebras, it does seem appropriate to point out that we don't really know.

\smallskip

\emph{Acknowledgement} I am grateful to Y. Sato for comments on the first version of this paper which helped me navigate in the litterature on the classification of simple $C^*$-algebras, and I thank the referee of \cite{Th5} for his remarks. The work was supported by the DFF-Research Project 2 `Automorphisms and Invariants of Operator Algebras', no. 7014-00145B.

\section{Statement of results}

\begin{thm}\label{20-12-20c} Let $A$ be a unital infinite dimensional simple AF $C^*$-algebra. For each non-empty closed face $F$ in the tracial state space $T(A)$ of $A$ and for each closed and lower bounded set $K \subseteq \mathbb R$ of real numbers containing $0$ there is a $2\pi$-periodic flow $\alpha$ on $A$ such that
\begin{itemize}
\item there is a $\beta$-KMS state for $\alpha$ if and only if $\beta \in K$,
\item for $\beta \in K \backslash \{0\}$ the simplex of $\beta$-KMS states for $\alpha$ is affinely homeomorphic to $F$,
\item the simplex of $0$-KMS states for $\alpha$ is affinely homeomorphic to $T(A)$, and
\item the fixed point algebra $A^{\alpha}$ of $\alpha$ is an $AF$ algebra.
\end{itemize}
When $K$ is also upper bounded, and hence compact, $\alpha$ can be chosen such that the fixed point algebra $A^{\alpha}$ of $\alpha$ is a simple $AF$ algebra.
\end{thm}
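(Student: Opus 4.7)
The plan is to construct $\alpha$ as the pointwise limit of inner $2\pi$-periodic flows on a nested sequence of finite-dimensional subalgebras. Specifically, I would build a Bratteli diagram $A_1 \subseteq A_2 \subseteq \cdots$ of finite-dimensional subalgebras with $\overline{\bigcup_n A_n} = A$, together with self-adjoint local Hamiltonians $h_n \in A_n$ having integer spectrum and satisfying the compatibility $h_{n+1} - h_n \in A_n' \cap A_{n+1}$, so that the inner flows $\alpha^{(n)}_t = \operatorname{Ad}(e^{ith_n})$ agree on $A_n$ for all $n$ and converge to a well-defined $2\pi$-periodic flow $\alpha$ on $A$. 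Because each $A_n$ is $\alpha$-invariant and each finite-dimensional fixed point subalgebra equals $\{h_n\}' \cap A_n$, the global fixed point algebra $A^\alpha = \overline{\bigcup_n (\{h_n\}' \cap A_n)}$ is automatically AF.

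Next I would analyse the KMS states through this projective system. On each finite-dimensional level, the $\beta$-KMS states for $\operatorname{Ad}(e^{ith_n})$ are the explicit Gibbs states $x \mapsto \tau(e^{-\beta h_n} x)/\tau(e^{-\beta h_n})$ parameterised by tracial states $\tau$ of $\{h_n\}' \cap A_n$. A $\beta$-KMS state for $\alpha$ then corresponds to a coherent sequence $(\phi_n)$ of such Gibbs states compatible with the embeddings. The construction must arrange that for $\beta \in K \setminus \{0\}$ the coherent sequences form an affine simplex isomorphic to $F$; that at $\beta = 0$ they give $T(A)$ (automatic, since $0$-KMS states are traces of $A$); and that for $\beta \notin K$ the normalisations $\tau(e^{-\beta h_n})$ fail to match across levels, so no coherent sequence exists.

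The face $F$ is encoded by arranging that for large $n$ the nonzero eigenspaces of $h_n$ sit inside specific matrix summands of $A_n$ whose trace space projects onto $F \subseteq T(A)$. The set $K$ is encoded in the spectra of the $h_n$: the eigenvalues must densely approximate $K$, while the eigenspace dimensions $d^{(n)}_\lambda$ are tuned so that the partition function $\sum_\lambda d^{(n)}_\lambda e^{-\beta \lambda}$ produces incompatible normalisations for every $\beta$ outside the closed set $K$. The identification of the constructed inductive limit with the prescribed $A$ is then effected by matching the Elliott invariant, invoking the classification of simple unital AF algebras together with the more refined results of \cite{CETWW}, \cite{GLN} and \cite{EGLN}, which grant flexible choices of building blocks. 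For the last clause, when $K$ is bounded one can keep $\operatorname{spec}(h_n)$ in a uniform window and still ensure that all partial multiplicities in the Bratteli diagram of $A^\alpha$ grow to infinity, rendering $A^\alpha$ simple.

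The main obstacle will be the simultaneous realisation of the three prescriptions: the face $F$ at every $\beta \in K \setminus \{0\}$, the absence of KMS states for $\beta \notin K$, and fidelity of the inductive limit to the given $A$. The first demands that for each $\beta \in K \setminus \{0\}$ the Gibbs-weighted trace pairing restricts through the embeddings to a surjection onto $F$; the second requires a robust, uniform-in-$\tau$ incompatibility whenever $\beta \notin K$; and these two must be engineered without distorting the Elliott invariant. This balancing is the technical heart of the proof, and it is where a careful elaboration of the method of \cite{Th5}---itself extending the Bratteli--Elliott--Herman, Kishimoto, and Matui--Sato constructions---is expected to carry through, with the classification machinery providing the bridge from the abstractly built algebra to the prescribed $A$.
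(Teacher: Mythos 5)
Your proposal is fundamentally different from the paper's construction, and unfortunately it runs into a structural obstruction that cannot be repaired.

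You propose building $\alpha$ as the pointwise limit of inner flows $\operatorname{Ad}(e^{ith_n})$ attached to a coherent system of local Hamiltonians $h_n \in A_n$ with $h_{n+1}-h_n \in A_n' \cap A_{n+1}$. Such a flow is \emph{approximately inner} by construction: for every $a \in \bigcup_n A_n$ one has $\alpha_t(a) = \operatorname{Ad}(e^{ith_m})(a)$ for all $m$ large, and by density $\lim_n \|\alpha_t(a) - e^{ith_n}ae^{-ith_n}\| = 0$ for every $a \in A$, uniformly for $t$ in compacts. But the theorem of Powers and Sakai (\cite{PS}, cited in the paper's introduction) says precisely that an approximately inner flow on a UHF algebra admits $\beta$-KMS states for \emph{every} $\beta \in \mathbb R$. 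Hence whenever $K \neq \mathbb R$ — and every closed, lower bounded $K$ qualifies — your construction produces KMS states at inverse temperatures that must be excluded. The observation you make in passing, that $0$-KMS states automatically give $T(A)$ because traces are invariant under your $\alpha$, is an instance of the same fact and should have flagged the issue: this invariance for free is a symptom of approximate innerness, which is exactly the property the flow cannot have.

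This is also the reason the paper takes the route it does. The whole point of the developments following Kishimoto \cite{Ki2} and Matui--Sato \cite{MS} is that to obtain a restricted KMS spectrum one must leave the approximately inner world, and the dual action on a crossed product $B \rtimes_\gamma \mathbb Z$ (cut down by a projection) is the mechanism that achieves this. The $K$- and $F$-dependence are then not encoded in spectra of local Hamiltonians, but in a choice of positive cone on the dimension group $G = \bigoplus_{\mathbb Z} H$ (with $H = K_0(A)$) such that the positive, $\gamma_*$-eigenvector homomorphisms on $(G,G^+)$ with eigenvalue $e^{-\beta}$ exist precisely when $\beta \in K$ and are parameterised by $F$ when $\beta \neq 0$ (Lemmas~\ref{18-12-20b}, \ref{23-04-21}); the passage to $A$ is then via the Elliott invariant and the classification results, and the invariance of all traces under the dual action is engineered by tensoring with a Rohlin-property automorphism of $\mathcal Z$ (Lemma~\ref{28-10-20}), not obtained for free. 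Your partition-function incompatibility heuristic, even were the approximate-innerness obstruction absent, has no analogue of this spectral control and would not by itself exclude $\beta \notin K$ in a uniform way.

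Where something like your picture does survive in the paper is in Section~\ref{xxx}: the generalized gauge action $\alpha^1$ on $U_1$ used in the proof of Corollary~\ref{15-12-20d} is exactly of the local-Hamiltonian form you describe. But there it is only one tensor factor; the full flow is $\alpha^1 \otimes \alpha^2$ with $\alpha^2$ coming from Corollary~\ref{20-12-20d}, and it is the non-approximately-inner factor $\alpha^2$ that enforces the restriction to $K$. Your proposal needs such a non-approximately-inner ingredient; without one the Powers--Sakai theorem blocks the construction at the very first step.
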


When $K$ is not compact it is not possible to arrange that $A^{\alpha}$ is simple because when the set of inverse temperatures contains arbitrarily large numbers the flow must have ground states, and this is not possible for periodic flows whose fixed point algebra is simple, cf. Remark 3.5 in \cite{BEH}.

By taking $F$ to consist of a single extremal point in $T(A)$ we get the following

\begin{cor}\label{20-12-20d} Let $A$ be a unital infinite dimensional simple AF $C^*$-algebra. For each closed and lower bounded set $K \subseteq \mathbb R$ of real numbers containing $0$ there is a $2\pi$-periodic flow $\alpha$ on $A$ such that
\begin{itemize}
\item there is a $\beta$-KMS state for $\alpha$ if and only if $\beta \in K$,
\item for $\beta \in K \backslash \{0\}$ the $\beta$-KMS state is unique,
\item the simplex of $0$-KMS states for $\alpha$ is a affinely homeomorphic to $T(A)$, and
\item the fixed point algebra $A^{\alpha}$ of $\alpha$ is an $AF$ algebra.
\end{itemize}
When $K$ is also upper bounded, and hence compact, $\alpha$ can be chosen such that the fixed point algebra $A^{\alpha}$ of $\alpha$ is a simple $AF$ algebra.
\end{cor}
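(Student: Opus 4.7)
The plan is to derive the corollary as an immediate specialization of \refthm{20-12-20c}, choosing the face $F$ in $T(A)$ to be a one-point face. First I would verify that such a face exists: since $A$ is a unital simple AF algebra, it admits a tracial state, and the tracial state space $T(A)$ is a (nonempty, metrizable) Choquet simplex in the weak-$*$ topology. By the Krein–Milman theorem $T(A)$ has at least one extreme point $\tau$. A singleton $\{\tau\}$ in a convex set is a face precisely when $\tau$ is extreme, so $\{\tau\}$ is a closed face of $T(A)$.

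Next I would invoke \refthm{20-12-20c} with this choice $F = \{\tau\}$ and the prescribed closed, lower bounded set $K \subseteq \mathbb R$ containing $0$. The theorem produces a $2\pi$-periodic flow $\alpha$ on $A$ with the four bulleted properties and with $A^\alpha$ an AF algebra; in the compact case it further produces one for which $A^\alpha$ is simple AF. Transporting these bullets across the identification $F = \{\tau\}$ yields everything claimed: the set of $\beta$ for which a $\beta$-KMS state exists is exactly $K$; for every $\beta \in K \setminus \{0\}$ the simplex of $\beta$-KMS states is affinely homeomorphic to the one-point simplex $\{\tau\}$, hence consists of a single state; the simplex of $0$-KMS states is affinely homeomorphic to $T(A)$; and $A^\alpha$ is AF (simple AF if $K$ is compact).

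There is no substantive obstacle beyond the theorem itself; the only check worth emphasizing is the face condition, namely that any singleton chosen inside $T(A)$ must consist of an extreme trace in order to qualify as input to \refthm{20-12-20c}. This is automatic by Krein–Milman together with the fact that a singleton is a face exactly when its single point is extreme, so the reduction goes through without any additional construction.
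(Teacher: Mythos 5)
Your proof is correct and matches the paper's own derivation: the paper likewise obtains Corollary~\ref{20-12-20d} from Theorem~\ref{20-12-20c} by taking $F$ to be a single extremal point of $T(A)$. Your added check that such a singleton face exists (via Krein--Milman and the characterization of singleton faces as extreme points) is a reasonable, if routine, elaboration.
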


%By combining with Theorem 2.4 in \cite{Th2} and Proposition 4.7 in \cite{CP} we get the following non-unital version.

%\begin{cor}\label{20-12-20d} Let $A$ be an infinite dimensional simple AF $C^*$-algebra. For each compact set $K \subseteq \mathbb R$ of real numbers containing $0$ there is a flow $\alpha$ on $A$ such that
%\begin{itemize}
%\item there is a $\beta$-KMS weight for $\alpha$ if and only if $\beta \in K$,
%\item for $\beta \in K \backslash \{0\}$ the $\beta$-KMS state is unique, up to multiplication by scalars, and
%\item all traces of $A$ are $\alpha$-invariant and the cones of $0$-KMS weights for $\alpha$ is a affinely homeomorphic to the cone of traces on $A$.
%\end{itemize}
%\end{cor}

% We say that a UHF algebra $U$ is of infinite type when there is a natural number $m \geq 2$ such that $M_{m^k}(\mathbb C)$ embeds unitally into $U$ for all $k \in \mathbb N$. 
 
 %When this holds there is a UHF algebra $U_1$ such that $U \simeq U \otimes U_1$ and we can therefore proceed exactly as in Section 5 of \cite{Th5} to obtain the following additional corollaries. In the formulation we denote by $S^\alpha_\beta$ the simplex of $\beta$-KMS states for a flow $\alpha$.

In the last section of the paper we combine Corollary \ref{20-12-20d} with methods and results from \cite{Th3} and \cite{Th4} to obtain the following additional corollaries. Recall that two compact Choquet simplexes are strongly affinely isomorphic when there is an affine bijection between them which restricts to a homeomorphism between the sets of extremal points. For Bauer simplexes this is the same as affine homeomorphism, but in general it is a weaker notion. See \cite{Th4}. 

\begin{cor}\label{15-12-20d} Let $U$ be a UHF algebra and let $K$ be a closed and lower bounded set of real numbers containing $0$. Let $\mathbb I$ be a finite or countably infinite collection of intervals in $\mathbb R$ such that $I = \mathbb R$ for at least one $I \in \mathbb I$. For each $I \in \mathbb I$ choose a compact metrizable Choquet simplex $S_I$ and for $\beta \in K$ set $\mathbb I_\beta = \left\{ I \in \mathbb I: \ \beta \in I\right\}$. There is a $2\pi$-periodic flow $\alpha$ on $U$ such that
\begin{itemize}
\item there is $\beta$-KMS state for $\alpha$ if and only if $\beta \in K$,
\item for each $I \in \mathbb I$ and $\beta \in (I\cap K) \backslash \{0\}$ there is a closed face $F_I$ in $S^{\alpha}_\beta$ strongly affinely isomorphic to $S_I$, and
\item for each $\beta \in K \backslash \{0\}$ and each $\omega \in S^\alpha_\beta$ there is a unique norm-convergent decomposition
$$
\omega = \sum_{I\in \mathbb I_\beta} \omega_I \ ,
$$
where $\omega_I \in \left\{ t \mu : \ t \in ]0,1], \ \mu \in F_I\right\}$. 
\end{itemize} 
\end{cor}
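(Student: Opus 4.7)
The plan is to reduce the statement to Corollary~\ref{20-12-20d}, which controls the inverse-temperature set $K$, and combine it with the direct-sum-of-flows machinery from \cite{Th3} and \cite{Th4}, which produces the face structure $F_I \cong S_I$. Enumerate $\mathbb I = \{I_n\}_{n \geq 1}$ with $I_1 = \mathbb R$.

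First, for each $n$, I would choose a unital simple AF algebra $A_n$ whose tracial state space $T(A_n)$ is strongly affinely isomorphic to $S_{I_n}$; such $A_n$ exists because every metrizable Choquet simplex is realised as the trace simplex of some unital simple AF algebra. I then apply Theorem~\ref{20-12-20c} to $A_n$ with face $F = T(A_n)$ and inverse-temperature set $K_n = (K \cap I_n) \cup \{0\}$, which is closed, lower bounded and contains $0$. This yields a $2\pi$-periodic flow $\alpha^n$ on $A_n$ whose $\beta$-KMS simplex is strongly affinely isomorphic to $S_{I_n}$ for every $\beta \in (I_n \cap K) \setminus \{0\}$ and which admits no $\beta$-KMS states for $\beta \notin K_n$. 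The case $n = 1$ provides the baseline ensuring that some KMS state exists at every $\beta \in K$.

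Second, I would amalgamate the systems $(A_n, \alpha^n)$ into a single system $(B, \gamma)$ whose fixed-point algebra $B^\gamma$ contains mutually orthogonal central projections $p_n$ summing to $1$ and such that the restriction of $\gamma$ to each corner $p_n B p_n$ encodes the KMS data of the corresponding $\alpha^n$. The construction from \cite{Th4} then yields a flow $\gamma$ on $B$ whose $\beta$-KMS simplex $S^\gamma_\beta$ splits as the norm-convergent direct sum of closed faces $F_n \cong S_{I_n}$ indexed by $\{n : \beta \in I_n\}$, giving the required unique decomposition $\omega = \sum_{I \in \mathbb I_\beta} \omega_I$ of every element $\omega \in S^\gamma_\beta$.

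The hardest step is to realise $(B, \gamma)$ on the prescribed UHF algebra $U$, since a general amalgamated simple AF algebra is not UHF. My plan is to carry out the amalgamation inside a chosen UHF Bratteli diagram for $U$: at each level of the inductive limit behind the construction of Theorem~\ref{20-12-20c}, one chooses the unitary cocycles implementing $\alpha^n$ to sit in nested matrix subalgebras compatible with the supernatural number of $U$, and uses strong self-absorption $U \otimes U \cong U$ to force the ambient algebra to coincide with $U$ while keeping the central cut-downs in $U^\gamma$ intact. Verifying that the delicate direct-sum KMS decomposition from Step~2 survives this UHF-compatibility modification — in particular that no additional KMS states are created outside $K$ and that the faces $F_n$ remain mutually disjoint in each $S^\gamma_\beta$ — is the core difficulty, and this is precisely what the explicit cocycle techniques of \cite{Th3} are designed to achieve.
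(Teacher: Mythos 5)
Your proposal departs from the paper's proof, and the route you describe has a genuine gap.

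The core issue is Step 3, the ``amalgamation.'' You want a single system $(B,\gamma)$ built from the family $(A_n,\alpha^n)$ so that $B^\gamma$ contains mutually orthogonal central projections $p_n$ summing to $1$, with $\gamma$ on each corner $p_nBp_n$ reproducing the KMS data of $\alpha^n$. But this is a direct-sum type structure: if $B$ is to be simple (and eventually UHF) it cannot contain a family of nontrivial central projections in a hereditary subalgebra that are seen by every trace, and if $\mathbb I$ is countably infinite you would need infinitely many such projections summing to $1$ in a unital algebra, which already forces almost all of them to vanish in norm. The phrase ``norm-convergent direct sum of closed faces'' that appears in Corollary~5.7 of \cite{Th4} refers to a decomposition of the KMS simplex, not to a decomposition of the algebra, and the construction there does not proceed by gluing together separate C*-dynamical systems. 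You are importing a conclusion of \cite{Th4} into a construction that \cite{Th4} does not actually carry out, and the gap is filled only by the vague Step 4, which as you yourself note is ``the core difficulty.''

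What you are missing is that the restriction to the temperature set $K$ and the control of the face structure can be handled on two \emph{independent tensor factors}, rather than inside a single amalgamated construction. The paper writes $U\cong U_1\otimes U_2$ with both $U_i$ infinite-dimensional UHF, takes $\alpha^1$ on $U_1$ to be a $2\pi$-periodic generalized gauge action supplied directly by Corollary~5.7 of \cite{Th4} (with integer-valued potential; this already produces the faces $F_I\cong S_I$ and the unique norm-convergent decomposition for \emph{every} real $\beta\neq 0$, with no constraint on which $\beta$ occur), and takes $\alpha^2$ on $U_2$ from Corollary~\ref{20-12-20d}, so that $\alpha^2$ has a $\beta$-KMS state exactly for $\beta\in K$ and that state is unique when $\beta\neq 0$. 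The flow is then $\alpha=\alpha^1\otimes\alpha^2$. The constraint $\beta\in K$ is immediate because any $\beta$-KMS state for $\alpha$ restricts to a $\beta$-KMS state for $\alpha^2$ on the second factor; and the decisive point, which your proposal has no analogue of, is that \emph{uniqueness} of the $\beta$-KMS state $\omega_\beta$ for $\alpha^2$ forces every $\psi\in S^\alpha_\beta$ to factor as $\psi=\lambda\otimes\omega_\beta$ with $\lambda\in S^{\alpha^1}_\beta$. Establishing this factorization uses the generalized-gauge structure of $\alpha^1$ (conditional expectation onto the fixed-point algebra and the KMS condition against the matrix units $E^n_{\mu,\mu'}$), not any cocycle or Bratteli-diagram manipulation of the type you sketch. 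Without the tensor-split and the uniqueness of $\alpha^2$'s KMS states you cannot see why the face structure of $\alpha^1$ survives the passage to $\alpha$, and your proposal never supplies a mechanism for that.
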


 \begin{cor}\label{27-10-20a}   Let $U$ be a UHF algebra and let $K$ be a closed and lower bounded set of real numbers containing $0$. There is a $2\pi$-periodic flow $\alpha$ on $U$ such that $S^\alpha_\beta = \emptyset$ if and only if $\beta \notin K$, and for $\beta, \beta' \in K$ the simplexes $S^{\alpha}_\beta$ and $S^\alpha_{\beta'}$ are not strongly affinely isomorphic unless $\beta = \beta'$. 
 \end{cor}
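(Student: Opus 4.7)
The plan is to invoke Corollary \ref{15-12-20d} with a countable family $\mathbb I$ of intervals separating points of $\mathbb R$ and with simplexes $S_I$ chosen so that $\mathbb I_\beta$ is recoverable from $S^\alpha_\beta$.

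Enumerate
$$
\mathbb I = \{\mathbb R\} \cup \bigl\{(-\infty,q): q \in \mathbb Q\bigr\} \cup \bigl\{(q,\infty): q \in \mathbb Q\bigr\} = \{I_n\}_{n \geq 1},
$$
with $I_1 = \mathbb R$. Density of $\mathbb Q$ ensures $\mathbb I_\beta \neq \mathbb I_{\beta'}$ whenever $\beta \neq \beta'$ in $\mathbb R$. For each $n \geq 1$ take $S_{I_n}$ to be the Bauer simplex of regular Borel probability measures on the compact ordinal space $\omega^n+1$ (with its order topology), so that the extreme boundaries $\partial_e S_{I_n} \cong \omega^n+1$ are pairwise non-homeomorphic compact metric spaces, $\omega^n+1$ having Cantor--Bendixson rank $n$ with a unique maximum-rank point $\omega^n$. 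Corollary \ref{15-12-20d} applied to this data yields a $2\pi$-periodic flow $\alpha$ on $U$ with $S^\alpha_\beta \neq \emptyset$ iff $\beta \in K$.

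Fix $\beta \neq \beta'$ in $K$; the goal is to show $S^\alpha_\beta \not\cong S^\alpha_{\beta'}$ strongly affinely. If $\beta = 0$ then $S^\alpha_0$ is a singleton (as $U$ has a unique trace), while for $\beta' \neq 0$ the simplex $S^\alpha_{\beta'}$ contains the closed face $F_{\mathbb R} \cong S_{I_1}$ and hence has infinitely many extreme points. Otherwise $\beta, \beta' \in K \setminus \{0\}$ and some $n \in \mathbb I_\beta \triangle \mathbb I_{\beta'}$. The unique norm-convergent decomposition of Corollary \ref{15-12-20d} presents $S^\alpha_\beta$ as an $\ell^1$-type direct sum of the closed faces $F_{I_m} \cong S_{I_m}$ over $I_m \in \mathbb I_\beta$; in particular $\partial_e S^\alpha_\beta$ is the topological disjoint union $\bigsqcup_{I_m \in \mathbb I_\beta} \partial_e F_{I_m}$ with each piece clopen, so the set of integers $m$ for which $\partial_e S^\alpha_\beta$ contains a point of Cantor--Bendixson rank exactly $m$ is precisely $\{m : I_m \in \mathbb I_\beta\}$. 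A strong affine isomorphism $S^\alpha_\beta \to S^\alpha_{\beta'}$ restricts by definition to a homeomorphism of extreme boundaries; since homeomorphisms preserve Cantor--Bendixson rank of points, this set is an invariant, forcing $\mathbb I_\beta = \mathbb I_{\beta'}$, contrary to the choice of $n$.

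The critical technical point, and the main obstacle, is to verify that the weak-$*$ topology on $\partial_e S^\alpha_\beta$ really coincides with the topological disjoint union structure implied by the decomposition; equivalently, that each $\partial_e F_{I_m}$ is clopen in $\partial_e S^\alpha_\beta$. This amounts to establishing weak-$*$ continuity of the component-projections $\omega \mapsto \omega_{I_m}$ induced by the unique decomposition, for then these projections take only the values $\omega$ and $0$ on $\partial_e S^\alpha_\beta$ and the claim follows. Verifying this continuity requires inspecting the construction behind Corollary \ref{15-12-20d} (which builds on the methods of \cite{Th3} and \cite{Th4}); once it is in hand, the Cantor--Bendixson argument above completes the proof.
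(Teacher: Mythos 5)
Your argument hinges on the claim that ``the set of integers $m$ for which $\partial_e S^\alpha_\beta$ contains a point of Cantor--Bendixson rank exactly $m$ is precisely $\{m : I_m \in \mathbb I_\beta\}$.'' This is false, for two compounding reasons. First, $\omega^n+1$ has points of every Cantor--Bendixson rank $0,1,\ldots,n$, not only rank $n$; so even granted the clopen decomposition $\partial_e S^\alpha_\beta = \bigsqcup_{I_m\in \mathbb I_\beta}\partial_e F_{I_m}$, the set of ranks attained is the initial segment $\{0,1,\ldots,\sup\{m: I_m\in\mathbb I_\beta\}\}$ (or all of $\omega$ if that supremum is infinite), not the index set $\mathbb I_\beta$. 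Second, and fatally, with your choice $\mathbb I = \{\mathbb R\}\cup\{(-\infty,q):q\in\mathbb Q\}\cup\{(q,\infty):q\in\mathbb Q\}$, \emph{every} $\beta\in\mathbb R$ lies in infinitely many of these intervals, so $\mathbb I_\beta$ is infinite for every $\beta$, and the rank spectrum is all of $\omega$ regardless of $\beta$. The invariant you extract is therefore constant on $K\setminus\{0\}$ and distinguishes nothing.

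Beyond this, you flag explicitly that you have not established that each $\partial_e F_{I_m}$ is clopen in $\partial_e S^\alpha_\beta$; without some such statement about how the extreme boundary is assembled from the pieces, the reduction to a topological invariant of the boundary has no content. The paper's own treatment of Corollary \ref{27-10-20a} is a one-line deferral to Section~6 of \cite{Th4}, where the family $\mathbb I$ is taken to consist of bounded intervals with rational endpoints and the simplexes $S_I$ are chosen so that $\mathbb I_\beta$ can genuinely be read off from the topology of the boundary; a choice that works is to take $\partial_e S_{I_n}$ to be pairwise non-homeomorphic compact \emph{connected} metric spaces (for instance $n$-spheres or $n$-cubes), since then the non-singleton connected components of $\partial_e S^\alpha_\beta$ — which are intrinsic topological data, requiring no separate clopenness argument — recover the set $\{n : I_n\in\mathbb I_\beta\}$ up to homeomorphism type. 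Your countable ordinal spaces cannot serve this role because they are totally disconnected, and rank-by-rank counting fails as explained above. As written, the proposal does not establish the corollary.
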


\section{Preparations}
In this paper all $C^*$-algebras are assumed to be separable and all traces and weights on a $C^*$-algebra are required to be non-zero, densely defined and lower semi-continuous. Concerning weights and in particular KMS weights we shall use notation and results from Sections 1.1 and 1.3 in \cite{KV}. Let $A$ be a $C^*$-algebra and $\theta$ a flow on $A$. Let $\beta \in \mathbb R$. A $\beta$-KMS weight for $\theta$ is a weight $\omega$ on $A$ such that $\omega \circ \theta_t = \omega$ for all $t$, and 
\begin{equation}\label{27-10-20c}
\omega(a^*a) \ = \ \omega\left(\theta_{-\frac{i\beta}{2}}(a) \theta_{-\frac{i\beta}{2}}(a)^*\right) \ \  \ \forall a \in D(\theta_{-\frac{i\beta}{2}}) \ .
\end{equation}
In particular, a $0$-KMS weight for $\theta$ is a $\theta$-invariant trace. It was shown by Kustermans in Theorem 6.36 of \cite{Ku} that this definition agrees with the one introduced by Combes in \cite{C}.
%This condition holds if and only if for all $a,b \in \mathcal N_\omega \cap \mathcal N_\omega^*$ there is a continuous $\mathbb C$-valued function $f$ defined on the strip of complex numbers whose imaginary part is between $0$ and $\beta$ such that
%\begin{itemize}
%\item $f$ is analytic in the interior of the strip, 
%\item $f(t) = \omega(b\theta_t(a))$, and
%\item $f(t+\beta i) = \omega(\theta_t(a)b)$,
%\end{itemize}
%for all $t \in \mathbb R$, cf. Proposition 1.11 in \cite{KV} and Theorem 6.36 in \cite{Ku}. 
It is because of the formulation given by \eqref{27-10-20c}, which was not available when \cite{BEH} was written, that we are able to work with KMS weights throughout the present work. A bounded $\beta$-KMS weight is called a $\beta$-KMS functional and a $\beta$-KMS state when it is of norm $1$. 
 
 The first lemma can be considered as an updated version of a part of the discussion in Remark 3.3 of \cite{BEH}.

 \begin{lemma}\label{26-10-20} Let $B$ be a $C^*$-algebra and $\gamma \in \Aut(B)$ an automorphism of $B$. Let $\widehat{\gamma}$ be the dual action on $B \rtimes_{\gamma} \mathbb Z$ considered as a $2 \pi$-periodic flow. For $\beta \in \mathbb R$ the restriction map $\omega \mapsto \omega|_B$ is a bijection from the $\beta$-KMS weights for $\widehat{\gamma}$ onto the traces $\tau$ on $B$ with the property that $\tau \circ \gamma = e^{-\beta} \tau$. The inverse is the map $\tau \mapsto \tau \circ P$, where $P:  B \rtimes_{\gamma} \mathbb Z \to B$ is the canonical conditional expectation.
 \end{lemma}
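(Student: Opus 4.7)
The plan is to establish that both maps are well-defined and mutually inverse, working on the dense $*$-subalgebra $B_0 \subset B \rtimes_\gamma \Z$ of Laurent polynomials $\sum_n b_n u^n$ in the canonical implementing unitary $u$ (so $ubu^{-1} = \gamma(b)$ and $\widehat{\gamma}_t(bu^n) = e^{int}bu^n$, which makes every element of $B_0$ entire for $\widehat{\gamma}$), and exploiting that $P$ is realised as the Bochner integral $P(x) = \frac{1}{2\pi}\int_0^{2\pi}\widehat{\gamma}_t(x)\,dt$. The real difficulty will lie in showing $\omega = (\omega|_B) \circ P$ for a general KMS weight $\omega$; the remaining ingredients are routine calculations.

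Starting from a trace $\tau$ on $B$ with $\tau \circ \gamma = e^{-\beta}\tau$, I put $\omega := \tau \circ P$. Invariance of $\omega$ is immediate from $P \circ \widehat{\gamma}_t = P$, while lower semi-continuity, density of the ideal of definition, and non-triviality of $\omega$ follow from the analogous properties of $\tau$ together with the fact that $P$ is a faithful positive conditional expectation. For \eqref{27-10-20c} on $a = \sum_n b_n u^n \in B_0$, expansion using $u^{-m}b_m^* = \gamma^{-m}(b_m^*)u^{-m}$ yields $P(a^*a) = \sum_n \gamma^{-n}(b_n^*b_n)$ and $P(\widehat{\gamma}_{-i\beta/2}(a)\widehat{\gamma}_{-i\beta/2}(a)^*) = \sum_n e^{n\beta} b_n b_n^*$; applying $\tau$ and using the twist $\tau \circ \gamma^{-n} = e^{n\beta}\tau$ on the first and the trace property on the second, both collapse to $\sum_n e^{n\beta}\tau(b_n^*b_n)$. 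Approximation extends \eqref{27-10-20c} from $B_0$ to all of $D(\widehat{\gamma}_{-i\beta/2})$.

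In the other direction, let $\omega$ be a $\beta$-KMS weight for $\widehat{\gamma}$ and put $\tau := \omega|_B$. Elements of $B$ are fixed by $\widehat{\gamma}$, hence entire, so \eqref{27-10-20c} with $a \in B$ immediately gives $\tau(a^*a) = \tau(aa^*)$, i.e.\ $\tau$ is a trace. The twist is extracted by substituting the entire element $a = b^{1/2} u$ (for positive $b$ in the domain of $\tau$) into \eqref{27-10-20c}: since $a^*a = u^*bu = \gamma^{-1}(b)$ and $\widehat{\gamma}_{-i\beta/2}(a)\widehat{\gamma}_{-i\beta/2}(a)^* = e^{\beta}b^{1/2}uu^*b^{1/2} = e^{\beta}b$, one obtains $\tau(\gamma^{-1}(b)) = e^{\beta}\tau(b)$, equivalently $\tau \circ \gamma = e^{-\beta}\tau$.

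The relation $(\tau\circ P)|_B = \tau$ is trivial from $P|_B = \id_B$. The remaining identity $\omega = (\omega|_B) \circ P$ is the main obstacle: one must know that an invariant lower semi-continuous KMS weight is recovered from its restriction to the fixed-point subalgebra $B$ by averaging against the dual action. My plan is to combine $\widehat{\gamma}$-invariance of $\omega$ with the integral representation of $P$ to obtain, for $x \in (B\rtimes_\gamma \Z)_+$,
\[
\omega(P(x)) \;=\; \frac{1}{2\pi}\int_0^{2\pi}\omega(\widehat{\gamma}_t(x))\,dt \;=\; \omega(x).
\]
The inequality $\omega(P(x)) \leq \omega(x)$ follows from Riemann-sum approximation and lower semi-continuity of $\omega$; the reverse inequality must be argued more delicately, first on the ideal of definition of $\omega$ (where the Bochner integral and $\omega$ interchange in the standard bounded-functional manner) and then extended to arbitrary positive $x$ by lower semi-continuity. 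This Fubini-style interchange between $\omega$ and $\int\widehat{\gamma}_t(\cdot)\,dt$ is the technical heart of the proof; once it is in place, the two constructions are visibly inverse.
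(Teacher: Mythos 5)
Your formal calculation on the Laurent polynomial algebra $B_0$ is correct, and you have correctly identified the two hard steps — but in both places you describe them rather than resolve them, and that is where the substance of the lemma lies. For the direction $\tau\mapsto\tau\circ P$, the claim that ``approximation extends \eqref{27-10-20c} from $B_0$ to all of $D(\widehat{\gamma}_{-i\beta/2})$'' is a genuine gap: for an unbounded weight both sides of \eqref{27-10-20c} can be infinite and lower semi-continuity only gives one inequality, so density of $B_0$ in $D(\widehat{\gamma}_{-i\beta/2})$ does not by itself transfer the identity. The paper handles this by choosing an approximate unit $(p_k)$ from the Pedersen ideal $K(B)$ (so that $\tau$ behaves like a bounded trace on the corners, via Proposition 5.5.2 in Pedersen's book), verifying the KMS functional identity on $p_k(B\rtimes_\gamma\mathbb{Z})p_k$, and then deriving both inequalities $\tau\circ P(\widehat{\gamma}_{-i\beta/2}(x)\widehat{\gamma}_{-i\beta/2}(x)^*)\leq\tau\circ P(x^*x)$ and the converse by nested $k,l$ limits together with lower semi-continuity. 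Some such cutoff device is unavoidable; you cannot simply take limits along $B_0$.

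The identity $\omega=(\omega|_B)\circ P$ is the other place you flag without finishing. The ``Fubini-style interchange in the standard bounded-functional manner'' does not apply because $\omega$ is not bounded and is only defined on a dense hereditary cone; the interchange of $\omega$ with the Bochner integral $P(x)=\frac{1}{2\pi}\int_0^{2\pi}\widehat{\gamma}_t(x)\,dt$ is exactly what needs proof. The mechanism that actually works in the paper is more elementary than a Fubini argument: $\widehat{\gamma}$-invariance of $\omega$ forces $\omega(p_k b u^n p_k)=0$ for all $n\neq 0$, so $\omega(p_k\cdot p_k)=\omega(p_k P(\cdot)p_k)$ holds \emph{exactly}, without integration. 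Then, using that $\omega|_B$ is a trace, one has $\lim_k\omega(p_k P(a)p_k)=\omega(P(a))$, and since $p_k a p_k\to a$ lower semi-continuity yields $\omega(a)\leq\omega(P(a))$; together with the Riemann-sum inequality $\omega\circ P\leq\omega$ (which you have) this gives $\omega=\omega\circ P$. Without the Fourier-mode-killing observation and the Pedersen-ideal approximate unit, the ``reverse inequality'' you postpone does not close.
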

 \begin{proof} Let $p_1 \leq p_2 \leq \cdots$ be an approximate unit in $B$ from the Pedersen ideal $K(B)$ of $B$ with the additional property that $p_k^2 \leq p^2_{k+1}$ for all $k$, cf. \cite{Pe}. Let $u$ be the canonical unitary multiplier of $B \rtimes_\gamma \mathbb Z$ such that $ubu^* = \gamma(b)$ for $b \in B$, and let $\tau$ be a trace on $B$ with the property that $\tau \circ \gamma = e^{-\beta} \tau$. For $a,b \in B, \ n,m \in \mathbb Z$ we have that
 $$
 P(p_kau^np_k^2bu^mp_k) = P(p_kbu^m p_k^2\widehat{\gamma}_{i\beta}(au^n)p_k)  = 0
 $$
 unless $m = -n$. Set $a' = p_ka\gamma^n(p_k), \ b' = p_kb\gamma^{-n}(p_k)$. Then $a',b' \in K(B)$, and by using Proposition 5.5.2 in \cite{Pe} we find that
 \begin{align*}
 & \tau\circ P(  p_kbu^{-n}p_k\widehat{\gamma}_{i \beta} (p_kau^np_k))  = e^{-n\beta}\tau(p_kbu^{-n}p_k^2au^np_k) \\
&  =e^{-n \beta} \tau( b'u^{-n} a'u^n) =\tau(\gamma^n(b')a') \\
& =  \tau(a'\gamma^n(b')) = \tau \circ P(p_kau^n p_k^2bu^{-n}p_k) \ ,  
  \end{align*} 
showing that $\tau \circ P$ is a $\beta$-KMS functional on $p_k(B \rtimes_\gamma \mathbb Z)p_k$ for the restriction of $\widehat{\gamma}$. We shall use repeatedly that when $b \geq 0$ in $B$ we have that
\begin{equation}\label{10-11-20a}
\lim_{k \to \infty} \tau(p_kbp_k) = \tau(b) \ ,
\end{equation} 
which follows from the lower semi-continuity of $\tau$ since $\tau(p_kbp_k) = \tau(\sqrt{b}p_k^2\sqrt{b})$ and $\sqrt{b}p_k^2\sqrt{b}$ increases to $b$ as $k \to \infty$. Let $x \in D(\widehat{\gamma}_{-\frac{i\beta}{2}})$. Then
\begin{align*}
& \tau\circ P\left(\widehat{\gamma}_{- \frac{i\beta}{2}}(x) \widehat{\gamma}_{- \frac{i\beta}{2}}(x)^*\right) = \lim_{k \to \infty} \tau\circ P\left(\widehat{\gamma}_{- \frac{i\beta}{2}}(x) p_k^2 \widehat{\gamma}_{- \frac{i\beta}{2}}(x)^*\right)\\
& = \lim_{k \to \infty}\lim_{l \to \infty} \tau \left(p_lP\left(\widehat{\gamma}_{- \frac{i\beta}{2}}(x) p_k^2 \widehat{\gamma}_{- \frac{i\beta}{2}}(x)^*\right)p_l\right) \ . 
\end{align*}
We have shown above that $\tau \circ P$ is a $\beta$-KMS functional on $p_l(B \rtimes_\gamma \mathbb Z)p_l$ and when $l \geq k$ this gives 
\begin{align*}
&\tau\left(p_l P\left(\widehat{\gamma}_{- \frac{i\beta}{2}}(x) p_k^2 \widehat{\gamma}_{- \frac{i\beta}{2}}(x)^*\right)p_l\right)  = \tau\circ P\left(\widehat{\gamma}_{- \frac{i\beta}{2}}(p_lxp_k)  \widehat{\gamma}_{- \frac{i\beta}{2}}(p_lxp_k)^*\right) \\
& = \ \tau \circ P((p_lxp_k)^*p_lxp_k) = \tau \circ P(p_kx^*p_l^2xp_k)  \leq  \tau\left(p_kP(x^*x)p_k\right) \ .
\end{align*}
By using \eqref{10-11-20a} we conclude that
$$
\tau \circ P\left(\widehat{\gamma}_{- \frac{i\beta}{2}}(x) \widehat{\gamma}_{- \frac{i\beta}{2}}(x)^*\right) \ \leq \ \tau \circ P(x^*x) \ .
$$
Similarly,
\begin{align*}
& \tau \circ P(x^*x) = \lim_{k \to \infty} \tau \circ P(x^*p_k^2x) = \lim_{k \to \infty} \lim_{l \to \infty} \tau \circ P(p_lx^*p_k^2 x p_l) \ .
\end{align*}
When $l \geq k$,
\begin{align*}
&\tau \circ P(p_lx^*p_k^2 xp_l) = \tau \circ P( \widehat{\gamma}_{-\frac{i\beta}{2}}(p_k xp_l) \widehat{\gamma}_{-\frac{i\beta}{2}}(p_kxp_l)^*) \\
& = \tau \circ P(p_k \widehat{\gamma}_{-\frac{i\beta}{2}}( x)p_l^2 \widehat{\gamma}_{-\frac{i\beta}{2}}(x)^*p_k \  \leq \tau\left(p_kP( \widehat{\gamma}_{-\frac{i\beta}{2}}( x) \widehat{\gamma}_{-\frac{i\beta}{2}}(x)^*)p_k\right) ,
\end{align*}
and we find therefore that
\begin{align*}
&\tau \circ P(x^*x) \leq   \lim_{k \to \infty}  \tau \left(p_kP( \widehat{\gamma}_{-\frac{i\beta}{2}}( x) \widehat{\gamma}_{-\frac{i\beta}{2}}(x)^*)p_k\right) =  \tau \circ P( \widehat{\gamma}_{-\frac{i\beta}{2}}( x) \widehat{\gamma}_{-\frac{i\beta}{2}}(x)^*) \ .
\end{align*}
We conclude therefore first that $\tau \circ P(x^*x) = \tau \circ P( \widehat{\gamma}_{-\frac{i\beta}{2}}( x) \widehat{\gamma}_{-\frac{i\beta}{2}}(x)^*)$, and then that $\tau \circ P$ is a $\beta$-KMS weight.

Let $\omega$ be a $\beta$-KMS weight for $\widehat{\gamma}$. Then $\omega(b^*b) = \omega(bb^*)$ for all $b \in B$ because $\widehat{\gamma}_{-i\frac{\beta}{2}}(b) = b$. Using Riemann sum approximations to the integral
$$
P(a) = (2\pi)^{-1} \int_0^{2\pi} \widehat{\gamma}_t(a) \ \mathrm d t \ ,
$$
it follows from the $\widehat{\gamma}$-invariance and lower semi-continuity of $\omega$ that
\begin{equation}\label{26-10-20b}
\omega \circ P(a) \leq \omega(a)
\end{equation}
 when $ a \geq 0$ in $B\rtimes_\gamma \mathbb Z$. In particular, $\omega|_B$ is densely defined since $\omega$ is, and we conclude that $\tau = \omega|_B$ is a trace on $B$.  Let $a \geq 0$. Then 
$$
\omega \circ \gamma(a) = \omega(uau^*) = e^{-\beta}\omega\left( \widehat{\gamma}_{-\frac{i\beta}{2}}(u\sqrt{a}) \widehat{\gamma}_{-\frac{i\beta}{2}}(u\sqrt{a})^*\right) = e^{-\beta} \omega(a) \ .
$$
%Hence $\tau$ is a trace on $B$ such that $\tau \circ \gamma(a) = e^{-\beta} \tau(a)$ when $\tau(a) < \infty$. If $\tau \circ \gamma(a) < \infty$ we find in the same way that 
%\begin{align*}
%&\tau(a) = \omega(u^*\gamma(a)u) = e^\beta \omega\left( \widehat{\gamma}_{-i\frac{\beta}{2}}(u^*\sqrt{\gamma(a)})\widehat{\gamma}_{-i\frac{\beta}{2}}(u^*\sqrt{\gamma(a)})^*\right) \\
%& = e^\beta \omega(\gamma(a)) = e^\beta\tau(\gamma(a)) \ .
%\end{align*}
It follows that $\tau \circ \gamma = e^{-\beta}\tau$. It remains now only to show that $\omega = \omega \circ P$. Note that because $\omega \circ \widehat{\gamma}_t = \omega$ we find for all $b\in B, \ n \in \mathbb Z$, that
$$
\omega(p_kbu^np_k) = \begin{cases} 0, & \ n \neq 0 \\\omega(p_kbp_k) , & \ n = 0 \end{cases} \ = \ \omega(p_kP(bu^n)p_k) \ ,
$$
 implying that $\omega(p_k \ \cdot \ p_k) = \omega(p_kP( \ \cdot \ )p_k)$. Let $a \in B \rtimes_\gamma \mathbb Z$, $ a \geq 0$. Since $\omega|_B$ is a trace we can use \eqref{10-11-20a} to conclude that
$$
\lim_{k \to \infty} \omega(p_kap_k) = \lim_{k \to \infty} \omega(p_kP(a)p_k) =  \ \omega(P(a)) \ .
$$
Since $\lim_{k \to \infty} p_kap_k = a$ the lower semi-continuity of $\omega$ implies now that $\omega(a) \leq \omega(P(a))$. Combined with \eqref{26-10-20b} this yields the desired conclusion that $\omega \circ P = \omega$.
\end{proof}

\begin{remark}\label{11-11-20} It follows from Lemma \ref{26-10-20} that when $B$ has a unique trace the dual action will have no $\beta$-KMS weights for $\beta \neq 0$, and if in addition $B \rtimes_{\gamma} \mathbb Z$ is simple the restriction of the dual action to any corner $e(B \rtimes_\gamma \mathbb Z)e$ given by a $\widehat{\gamma}$-invariant non-zero projection $e$ will have no $\beta$-KMS states for $\beta \neq 0$ by Remark 3.3 in \cite{LN} or Theorem 2.4 in \cite{Th2}. This observation applies to the flows that were shown not to be approximately inner in \cite{Ki2} and \cite{MS}.
\end{remark}

Taking $\beta =0$ in Lemma \ref{26-10-20} we get a special case which we shall need. It is probably known.
 
  \begin{cor}\label{28-10-20a} Let $B$ be a $C^*$-algebra and $\gamma \in \Aut(B)$ an automorphism of $B$.  The restriction map $\omega \mapsto \omega|_B$ is a bijection from the $\widehat{\gamma}$-invariant traces $\omega$ on $B \rtimes_\gamma \mathbb Z$ onto the $\gamma$-invariant traces on $B$. The inverse is the map $\tau \mapsto \tau \circ P$, where $P:  B \rtimes_{\gamma} \mathbb Z \to B$ is the canonical conditional expectation.
\end{cor}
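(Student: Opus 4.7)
The plan is to obtain this corollary as the $\beta=0$ specialization of Lemma \ref{26-10-20}; essentially no new work is required, only a matching of definitions on the two sides of the bijection.

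First, recall the observation made just before Lemma \ref{26-10-20}: a $0$-KMS weight for a flow $\theta$ is precisely a $\theta$-invariant trace. Indeed, the condition \eqref{27-10-20c} at $\beta=0$ reduces to $\omega(a^*a)=\omega(aa^*)$ for all $a\in A$ (using that $\theta_0=\id$ and hence $D(\theta_0)=A$), which is the trace identity, while $\omega\circ\theta_t=\omega$ gives the invariance. Applied to $\theta=\widehat{\gamma}$ on $B\rtimes_\gamma\mathbb{Z}$, this identifies the $0$-KMS weights for $\widehat{\gamma}$ with the $\widehat{\gamma}$-invariant traces on $B\rtimes_\gamma\mathbb{Z}$.

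Second, the condition appearing in Lemma \ref{26-10-20} on traces of $B$, namely $\tau\circ\gamma=e^{-\beta}\tau$, becomes $\tau\circ\gamma=\tau$ at $\beta=0$; this is $\gamma$-invariance of $\tau$. Combining these two identifications with Lemma \ref{26-10-20} gives directly that $\omega\mapsto\omega|_B$ is a bijection from $\widehat{\gamma}$-invariant traces on $B\rtimes_\gamma\mathbb{Z}$ onto $\gamma$-invariant traces on $B$, with inverse $\tau\mapsto\tau\circ P$. There is no separate analytic obstacle: the delicate approximation arguments involving the Pedersen-ideal approximate unit $(p_k)$ and the equalities $\omega(p_k\,\cdot\,p_k)=\omega(p_kP(\,\cdot\,)p_k)$ were already carried out in the proof of Lemma \ref{26-10-20}, and at $\beta=0$ they simplify because $\widehat{\gamma}_{-i\beta/2}=\id$. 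The entire proof therefore reduces to invoking Lemma \ref{26-10-20} with $\beta=0$.
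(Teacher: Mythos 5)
Your proof is correct and matches the paper's own approach exactly: the paper introduces this corollary with the remark ``Taking $\beta=0$ in Lemma~\ref{26-10-20} we get a special case,'' having already noted in the preamble to that lemma that a $0$-KMS weight is precisely an invariant trace. The only content of the corollary is the translation of terminology you spell out, and you have done so accurately.
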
  
  
We remark that in general $B \rtimes_\gamma \mathbb Z$ can have traces that are not invariant under the dual action $\widehat{\gamma}$ and hence are not determined by their restriction to $B$; also in cases where $B$ is UHF and $B \rtimes_\gamma \mathbb Z$ is simple. For the present purposes it is crucial that we can circumvent this issue thanks to the following lemma which is suggested by the work of Matui and Sato in \cite{MS}. In fact, most of it appears implicitly in \cite{MS}.

Recall that a $C^*$-algebra $A$ is stable when $A \otimes \mathbb K \simeq A$, where $\mathbb K$ is the $C^*$-algebra of compact operators on an infinite dimensional separable Hilbert space. Recall also that a partially ordered group $(G,G^+)$ has large denominators when the following holds: For any $a \in G^+$ and any $n \in \mathbb N$ there is an element $b \in G$ and an $m \in \mathbb N$ such that $ nb\leq a \leq mb$, cf. \cite{Ni}.

\begin{lemma}\label{28-10-20} Let $B$ be a stable AF-algebra and $\gamma \in \Aut(B)$. Assume that $K_0(B)$ has large denominators. There is an automorphism $\gamma' \in \Aut(B)$ such that
\begin{enumerate}
\item[a)] $\gamma'_* = \gamma_*$ on $K_0(B)$,
\item[b)] the restriction map 
$\mu \ \mapsto \ \mu|_B$
is a bijection from traces $\mu $ on $B \rtimes_{\gamma'} \mathbb Z$ onto the $\gamma'$-invariant traces on $B$, 
\item[c)] $B \rtimes_{\gamma'} \mathbb Z$ is $\mathcal Z$-stable; i.e $(B \rtimes_{\gamma'} \mathbb Z)\otimes \mathcal Z \simeq B \rtimes_{\gamma'} \mathbb Z$ where $\mathcal Z$ denotes the Jiang-Su algebra, \cite{JS}, and
\item[d)] $B \rtimes_{\gamma'} \mathbb Z$ is stable.
\end{enumerate} 
\end{lemma}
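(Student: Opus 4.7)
The plan is to construct $\gamma'$ in the same $K_0$-class as $\gamma$ with the Rohlin property in the sense of Matui--Sato \cite{MS}, and then to read off (b)--(d) from this single feature. The large denominator hypothesis on $K_0(B)$ is used precisely to split prescribed $K_0$-classes into $n$ equal pieces for each $n$, which is exactly what makes Rohlin towers available at every level.

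First I would invoke the classification of automorphisms of AF algebras (Elliott) to note that any $\gamma' \in \Aut(B)$ with $\gamma'_* = \gamma_*$ on $K_0(B)$ is cocycle equivalent to $\gamma$, so (a) imposes only a mild constraint and leaves room to prescribe local behaviour. I would then build $\gamma'$ by an intertwining argument through compatible Bratteli diagrams so that for every $n$, every finite $F \subset B$, and every $\varepsilon > 0$ there are pairwise orthogonal projections $\{e_i\}_{i=0}^{n-1}, \{f_j\}_{j=0}^{n}$ in $B$ with $\gamma'(e_i) \approx_\varepsilon e_{i+1}$ and $\gamma'(f_j) \approx_\varepsilon f_{j+1}$ cyclically and with $\sum_i e_i + \sum_j f_j$ acting as a unit on $F$ up to $\varepsilon$. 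Large denominators enter at each finite stage of the induction to produce projections of the required fractional $K_0$-class, after which the AF classification theorem assembles the matching $\gamma'$-action on them.

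Given such a $\gamma'$, items (b)--(d) follow by standard machinery. For (b), a Rohlin-tower averaging shows that any trace $\mu$ on $B \rtimes_{\gamma'} \Z$ vanishes on each spectral subspace $Bu^n$ with $n \ne 0$: for a tower of length $N > |n|$ one has $e_i\gamma'^{n}(e_i) \approx 0$, and the trace property converts this into $\mu(bu^n) \approx 0$; letting $N \to \infty$ gives $\mu = \mu \circ P$, whence Corollary \ref{28-10-20a} delivers the bijection. For (c), the Rohlin property combined with the comparison available on an AF algebra with large denominators is exactly the input of the Matui--Sato theorem that yields $\mathcal Z$-stability of the crossed product. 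For (d), using the stability of $B$ one identifies $B \cong B \otimes \mathbb K$ and arranges the construction so that $\gamma' = \gamma'' \otimes \id_{\mathbb K}$; then $B \rtimes_{\gamma'} \Z \cong (B_0 \rtimes_{\gamma''} \Z) \otimes \mathbb K$ is automatically stable, and the flexibility in the $K_0$-class accommodates this cosmetic adjustment without disturbing (a)--(c).

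The hard part is step two: realizing a Rohlin tower at level $n$ compatibly with those at earlier levels while keeping $\gamma'_*$ fixed on $K_0(B)$. This is the technical core and requires an intertwining of two inductive limit descriptions of $B$ in which both the AF classification theorem and the large denominator hypothesis are used essentially; once it is in place, (b)--(d) are essentially formal consequences of the Rohlin property and the structural work already assembled.
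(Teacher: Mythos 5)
Your proposal diverges from the paper's proof in a substantial way, and the divergence creates a genuine gap at the heart of the argument.

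The paper does not construct a Rohlin-type automorphism of $B$ directly. Instead, it uses the large-denominator hypothesis to show that $B$ is approximately divisible, hence $\mathcal Z$-stable, so that $B \simeq B \otimes \mathcal Z \otimes \mathcal Z$ by a $K_0$-preserving isomorphism. It then invokes Sato's result \cite{Sa} giving an automorphism $\theta$ of $\mathcal Z$ with the weak Rohlin property, and takes $\gamma'$ to be the pullback of $\id_{\mathbb K}\otimes\gamma\otimes\theta\otimes\id_{\mathcal Z}$. Property (a) comes from the K\"unneth theorem; (c) and (d) are formal consequences of the extra $\mathbb K$ and $\mathcal Z$ tensor factors; and (b) uses the weak Rohlin property of $\theta$ on the $\mathcal Z$-factor to average away the off-diagonal spectral subspaces in the crossed product. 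The large denominator hypothesis is used only to get $\mathcal Z$-stability of $B$ itself, not to carve out Rohlin towers in $B$.

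Your proposal tries instead to build the Rohlin towers directly inside $B$ by an intertwining argument through two Bratteli diagrams, using large denominators to split prescribed $K_0$-classes into $n$ pieces. You acknowledge that this intertwining is the ``technical core'' and defer it entirely. That is precisely the step the paper avoids: constructing, compatibly across levels and while fixing $\gamma'_*$ on $K_0$, an automorphism of an AF algebra with a (strong) Rohlin property is a substantial undertaking of a different order than anything else in the lemma, essentially a variant of Kishimoto--Evans-type technology. The paper's trick of pushing the Rohlin feature onto a $\mathcal Z$-tensor factor, where Sato has already done the work, replaces that construction with a two-line invocation. Without the missing construction your argument is incomplete, and the hard step is the whole content of the lemma.

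A secondary issue: even granting the Rohlin towers, your averaging argument for (b) treats $\mu$ as if it were bounded. Traces here are densely defined and lower semicontinuous, so you must first cut down by an approximate unit of projections (which exists and lies in the Pedersen ideal, so $\mu$ is finite on its range), establish the vanishing of $\mu(q b u^n q)$ for $n\ne 0$, and then pass to the limit. The paper does exactly this cutting-down, both for the tensor-with-$\mathcal Z$ version and earlier in the proof of Lemma~\ref{26-10-20}. As written, your estimate $\mu(bu^n)\approx 0$ has no meaning when $\mu$ is unbounded. Your handling of (d) by ``arranging $\gamma' = \gamma'' \otimes \id_{\mathbb K}$'' is fine in spirit and is what the paper does by building $\mathbb K$ into the tensor decomposition from the start, but it is again stated without explaining how it meshes with the (unconstructed) Rohlin data from the inductive step.
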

\begin{proof} The first step is to show that there is an isomorphism $\phi : B \to B \otimes \mathcal Z$ such that $\phi_* = {(\id_B \otimes 1_{\mathcal Z})}_*$. Since $\mathcal Z$ is strongly self-absorbing in the sense of Toms and Winter, \cite{TW}, it suffices for this to show that $B$ is $\mathcal Z$-stable. By Corollary 3.4 in \cite{TW} it is enough to show that $eBe$ is $\mathcal Z$-stable for any non-zero projection $e \in B$, and hence by Theorem 2.3 in \cite{TW} it is enough to show that $eBe$ is approximately divisible. Now $K_0(eBe)$ has large denominators since we assume that $K_0(B)$ has, and it follows therefore from Lemma 4.4 in \cite{Th1} that for any Bratteli diagram for $eBe$ the multiplicity matrices describing the embedding of a given level into the following levels will have the property that the minimum of the non-zero entries increases to infinity. This implies that $eBe$ is approximately divisible, cf. \cite{BKR}. We deduce in this way the existence of $\phi$.

For the second step we use \cite{Sa} to obtain an automorphism $\theta$ of $\mathcal Z$ with the weak Rohlin property; that is, with the property that for each $k \in \mathbb N$ there is a sequence $\{f_n\}$ in $\mathcal Z$ such that
\begin{itemize}
\item $0 \leq f_n \leq 1_{\mathcal Z}$ for all $n$,
\item $\lim_{n \to \infty} f_na -af_n = 0$ for all $a \in \mathcal Z$,
\item $\lim_{n \to \infty} \theta^j(f_n)f_n = 0$ for $j=1,2,3,\cdots, k$, and
\item $\lim_{n \to \infty} \tau\left(1_{\mathcal Z} - \sum_{j=0}^{k}\theta^j(f_n)\right) = 0$ ,
\end{itemize}
where $\tau$ is the trace state of $\mathcal Z$. From the first step, applied twice, it follows that there is an isomorphism $\phi : B \to B \otimes \mathcal Z \otimes \mathcal Z$ such that $\phi_* = (\id_B \otimes 1_{\mathcal Z} \otimes 1_{\mathcal Z})_*$. Since $B$ is stable there is also a $*$-isomorphism $\psi_0 : B \to \mathbb K \otimes B$ such that ${\psi_0}_* = (e \otimes \id_B)_*$, where $e$ is a minimal non-zero projection in $\mathbb K$. Set 
$$
\psi = \left(\psi_0 \otimes \id_{\mathcal Z} \otimes  \id_{\mathcal Z}\right) \circ \phi : B \to \mathbb K \otimes B \otimes \mathcal Z \otimes \mathcal Z \ 
$$
and
$$
\gamma' = \psi^{-1} \circ \left(\id_{\mathbb K} \otimes \gamma \otimes \theta \otimes \id_{\mathcal Z}\right) \circ \psi \ .
$$ 
It follows from the K\"unneth theorem that $\psi_* = (e\otimes \id_B \otimes 1_{\mathcal Z} \otimes 1_{\mathcal Z})_*$ and hence $\left(\id_{\mathbb K} \otimes \gamma \otimes \theta \otimes \id_{\mathcal Z}\right)_* \circ \psi_* = (e \otimes \gamma \otimes 1_{\mathcal Z} \otimes 1_{\mathcal Z})_*$. Thus $\gamma'_* = \gamma$.
%\begin{align*}
%&\gamma'_* \ = \  {\psi_*}^{-1} \circ \left({\id_{\mathbb K}}_* \otimes \gamma_* \otimes \theta_* \otimes {\id_{\mathcal Z}}_*\right) \circ {\psi_*} \\
%& = \ {\phi_*}^{-1} \circ \left(e \otimes \id_{B \otimes \mathcal Z \otimes \mathcal Z}\right)_*^{-1} \circ \left({\id_{\mathbb K}}_* \otimes \gamma_* \otimes {\id_{\mathcal Z \otimes \mathcal Z}}_*\right) \circ  \circ \left(e \otimes \id_{B \otimes \mathcal Z \otimes \mathcal Z}\right)_* \circ \phi_* \\
%& = (\id_B \otimes 1_{\mathcal Z} \otimes 1_{\mathcal Z})_*^{-1} \circ \left(\gamma_* \otimes {\id_{\mathcal Z \otimes \mathcal Z}}_*\right)  \circ        (\id_B \otimes 1_{\mathcal Z} \otimes 1_{\mathcal Z})_* \ = \ \gamma_* \ .
%\end{align*}
To complete the proof it suffices to verify that b), c) and d) hold when $B$ is replaced by $\mathbb K \otimes B \otimes \mathcal Z\otimes \mathcal Z$ and $\gamma'$ by $\id_{\mathbb K} \otimes \gamma \otimes \theta \otimes \id_{\mathcal Z}$. The properties c) and d) follow immediately because $ \mathcal Z \otimes \mathcal Z \simeq \mathcal Z$, $\mathbb K \otimes \mathbb K \simeq \mathbb K$ and
$$
(\mathbb K \otimes B \otimes \mathcal Z \otimes \mathcal Z) \rtimes_{\id_{\mathbb K} \otimes\gamma \otimes \theta \otimes \id_{\mathcal Z}} \mathbb Z \ \simeq \ \mathbb K \otimes \left((B \otimes \mathcal Z)  \rtimes_{\gamma \otimes \theta} \mathbb Z \right) \otimes \mathcal Z \ .
$$
In the final step we verify b). For this, set $B' = \mathbb K \otimes B \otimes \mathcal Z$ and $\gamma'' = \id_{\mathbb K} \otimes \gamma \otimes \id_{\mathcal Z}$. Then 
$$
( \mathbb K \otimes B \otimes \mathcal Z \otimes \mathcal Z) \rtimes_{\id_{\mathbb K} \otimes \gamma \otimes \theta \otimes \id_{\mathcal Z}} \mathbb Z \ \simeq \ (B'\otimes \mathcal Z) \rtimes_{\gamma'' \otimes \theta} \mathbb Z \ ,
$$
and it suffices to verify that b) holds when $B$ is replaced by $B' \otimes \mathcal Z$ and $\gamma'$ by $\gamma''\otimes \theta$. Let $P : (B' \otimes \mathcal Z) \rtimes_{\gamma''\otimes \theta} \mathbb Z \to B' \otimes \mathcal Z$ be the canonical conditional expectation. In view of Corollary \ref{28-10-20a} what remains is to consider a trace $\mu$ on $(B '\otimes \mathcal Z) \rtimes_{\gamma''\otimes \theta} \mathbb Z$ and show that $\mu = \mu \circ P$. For this, let $\{q_n\}$ be an approximate unit in $B'$ consisting of projections. Since $q_n \otimes 1_{\mathcal Z}$ is a projection and therefore contained in the Pedersen ideal, $\mu(q_n \otimes 1_{\mathcal Z}) < \infty$, cf. \cite{Pe}. When $a \geq 0$ in $ (B' \otimes \mathcal Z) \rtimes_{\gamma''\otimes \theta} \mathbb Z $, 
$$
\mu(a) = \lim_n \mu\left(\sqrt{a}(q_n \otimes 1_{\mathcal Z}) \sqrt{a}\right) =  \lim_n \mu\left((q_n \otimes 1_{\mathcal Z})a(q_n\otimes 1_{\mathcal Z})\right) \ ,
$$
and similarly $
\mu(P(a)) =   \lim_n \mu\left((q_n \otimes 1_{\mathcal Z})P(a)(q_n\otimes 1_{\mathcal Z})\right)$.
It suffices therefore to show that 
$$
\mu\left((q_n \otimes 1_{\mathcal Z})a(q_n\otimes 1_{\mathcal Z})\right) = \mu\left((q_n \otimes 1_{\mathcal Z})P(a)(q_n\otimes 1_{\mathcal Z})\right)
$$ 
for all $n$. For this note that $x \mapsto \mu\left((q_n \otimes 1_{\mathcal Z})x(q_n\otimes 1_{\mathcal Z})\right)$ extends to a bounded positive linear functional of norm $\mu\left(q_n \otimes 1_{\mathcal Z}\right)$ on $(B' \otimes \mathcal Z) \rtimes_{\gamma''\otimes \theta} \mathbb Z$. It suffices therefore to consider positive elements $b \in B'$, $z \in \mathcal Z$ and $k \in \mathbb Z \backslash \{0\}$, and show that 
\begin{equation}\label{23-10-20}
\mu((q_n \otimes 1_{\mathcal Z})(b\otimes z) u^k(q_n\otimes 1_{\mathcal Z})) = 0
\end{equation}
when $u$ is the canonical unitary in the multiplier algebra of $(B' \otimes \mathcal Z) \rtimes_{\gamma''\otimes \theta} \mathbb Z$ such that $\Ad u = \gamma'' \otimes \theta$ on $B' \otimes \mathcal Z$. Since the complex conjugate of $\mu((q_n \otimes 1_{\mathcal Z})(b\otimes z) u^{-k}(q_n\otimes 1_{\mathcal Z}))$ is $\mu((q_n \otimes 1_{\mathcal Z})({\gamma''}^k(b)\otimes \theta^k(z)) u^{k}(q_n\otimes 1_{\mathcal Z}))$
we may assume $k \geq 1$. Let $\epsilon > 0$. Since 
$a \mapsto \mu(q_n \otimes a)$ is a bounded trace on $\mathcal Z$ it must be a scalar multiple of the unique trace state $\tau$ of $\mathcal Z$. It follows therefore from the properties of $\theta$ that there are elements $0 \leq g_j \leq 1_{\mathcal Z}, \ j = 0,1,2, \cdots , k$, in $\mathcal Z$ such that 
\begin{align}
&\left\|g_jz-zg_j\right\| \leq \epsilon \ \text{for all} \ j \ ,\label{23-10-20a} \\
& \left\|g_ig_j \right\| \leq \epsilon \ \text{for all} \ i,j, \ i \neq j , \label{23-10-20b} \\
& \left\|\theta^k(g_j)g_j\right\| \leq \epsilon \ \text{for all } \ j, \ \text{and} \label{23-10-20c} \\
&\left|\mu(q_n \otimes (1_{\mathcal Z}- \sum_{j=0}^{k} g_j)) \right| \ \leq \ \epsilon \mu(q_n\otimes 1_{\mathcal Z}). \ \label{23-10-20d}
\end{align}
In the following, when $s$ and $t$ are complex numbers such that for all $\delta > 0$ the $\epsilon$ in \eqref{23-10-20a}-\eqref{23-10-20d} can be chosen so small that $|s-t|\leq  \delta$, we will write $s \sim t$. Set
$$
y = 1_{\mathcal Z} - \sum_{j=0}^k g_j \ .
$$
It follows from \eqref{23-10-20a} that
$$
\mu((q_n \otimes 1_{\mathcal Z})(b\otimes z y) u^k(q_n\otimes 1_{\mathcal Z})) \ \sim \ \mu((q_n \otimes 1_{\mathcal Z})(b\otimes yz ) u^k(q_n\otimes 1_{\mathcal Z}))\ ,
$$
and from the Cauchy-Schwarz inequality that
\begin{equation}\label{23-10-20e}
\begin{split}
&\left|\mu((q_n \otimes 1_{\mathcal Z})(b\otimes yz ) u^k(q_n\otimes 1_{\mathcal Z}))\right| \ =  \ \left|\mu((q_n \otimes y)(b\otimes z ) u^k(q_n\otimes 1_{\mathcal Z}))\right| \\
& \leq  \ \sqrt{  \mu(q_n \otimes y^2)}\|b\|\|z\| \sqrt{\mu(q_n\otimes 1_{\mathcal Z})} \ .
\end{split}
\end{equation}
Note that it follows from \eqref{23-10-20b} that $\left\| (\sum_{j=0}^{k} g_j)^2 - \sum_{j=0}^{k} g_j^2\right\| \leq (k+1)^2\epsilon$ and hence
$$(\sum_{j=0}^{k} g_j)^2 \leq \sum_{j=0}^{k} g_j^2 + (k+1)^2 \epsilon 1_{\mathcal Z}\ \leq \  \sum_{j=0}^{k} g_j + (k+1)^2 \epsilon 1_{\mathcal Z}\ .
$$ 
It follows that $y^2 \leq y + (k+1)^2 \epsilon 1_{\mathcal Z}$ and then from \eqref{23-10-20d} that
$$
0 \ \leq  \ \mu(q_n \otimes y^2) \ \leq  \ ((k+1)^2 +1) \mu(q_n\otimes 1_{\mathcal Z}) \epsilon \ .
$$
Combined with \eqref{23-10-20e} this shows that $\mu((q_n \otimes 1_{\mathcal Z})(b\otimes yz ) u^k(q_n\otimes 1_{\mathcal Z})) \sim 0$, so to obtain \eqref{23-10-20} it suffices to show that $\mu((q_n \otimes 1_{\mathcal Z})(b\otimes zg_j) ) u^k(q_n\otimes 1_{\mathcal Z})) \sim 0$ for each $j$. It follows from \eqref{23-10-20a} that
$$
\mu((q_n \otimes 1_{\mathcal Z} )(b\otimes zg_j) ) u^k(q_n\otimes 1_{\mathcal Z})) \ \sim \ \mu((q_n \otimes 1_{\mathcal Z})(b\otimes \sqrt{g_j} z\sqrt{g_j}) ) u^k(q_n\otimes 1_{\mathcal Z})) \ . 
$$ 
Using Proposition 5.5.2 in \cite{Pe} for the second equality we get that 
\begin{align*}
&\mu((q_n \otimes 1_{\mathcal Z})(b\otimes \sqrt{g_j} z\sqrt{g_j})  u^k(q_n\otimes 1_{\mathcal Z})) \\
&= \mu((q_n \otimes \sqrt{g_j})(b\otimes  z \sqrt{g_j} ) u^k(q_n\otimes 1_{\mathcal Z})) \\
& = \mu((q_n \otimes 1_{\mathcal Z})(b\otimes  z \sqrt{g_j} ) u^k(q_n\otimes \sqrt{g_j})) \\
& = \mu((q_n \otimes 1_{\mathcal Z})(b{\gamma''}^k(q_n)\otimes  z \sqrt{g_j} \theta^k(\sqrt{g_j}) ) u^k(q_n\otimes 1_{\mathcal Z})) \ .
\end{align*}
It follows therefore from \eqref{23-10-20c} that
$ \mu((q_n \otimes 1_{\mathcal Z})(b\otimes zg_j) ) u^k(q_n\otimes 1_{\mathcal Z})) \sim 0$ as desired.
\end{proof}

The next lemma is well-known.

\begin{lemma}\label{23-10-20g} Let $B$ be an AF algebra. There is a bijective correspondence between traces $\tau$ on $B$ and the set of non-zero positive homomorphisms $ K_0(B) \to \mathbb R$. The bijection is given by the formula $\tau_*[e] = \tau(e)$ when $e$ is a projection in $B$.
\end{lemma}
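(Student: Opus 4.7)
The plan is to exploit the inductive limit structure $B = \overline{\bigcup_n B_n}$ with each $B_n$ a finite-dimensional subalgebra, combined with Pedersen's result that every densely defined lower semicontinuous trace is finite on the Pedersen ideal $K(B)$. Since every projection $p$ satisfies $p\cdot p = p$, it lies in $K(B)$, so $\tau(p) < \infty$ for any such trace $\tau$ and any projection $p$ in $M_\infty(B)$.

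Given $\tau$, I would first verify that $\tau_*([p]) := \tau(p)$ defines a non-zero positive group homomorphism $K_0(B) \to \mathbb{R}$. Well-definedness on Murray--von Neumann equivalence classes follows from $\tau(v^*v) = \tau(vv^*)$; additivity under orthogonal sums makes the assignment a monoid homomorphism from $V(B)$ to $\mathbb{R}_{\geq 0}$, which extends uniquely to $K_0(B)$ because AF algebras have cancellation and $K_0(B) = K_0(B)^+ - K_0(B)^+$. Positivity is built in. For non-triviality, pick $a \in B^+$ with $0 < \tau(a) < \infty$; approximating $a$ in norm by a positive element $a' \in B_n$ keeps the trace positive, and since $a'$ lies in a finite-dimensional algebra, some spectral projection $p$ of $a'$ satisfies $a' \leq \|a'\|\,p$, whence $\tau(p) \geq \|a'\|^{-1}\tau(a') > 0$.

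For injectivity, the restriction of $\tau$ to each $B_n = \bigoplus_j M_{k_j}$ is a positive tracial functional, hence is determined by its values on minimal projections, which are precisely encoded in $\tau_*$. Thus $\tau$ is determined on the dense $\ast$-subalgebra $\bigcup_n B_n$, and then on all of $B^+$ by lower semicontinuity, since every positive element of $B$ can be approximated from below by positive elements of $\bigcup_n B_n$.

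For surjectivity, given a non-zero positive homomorphism $\phi : K_0(B) \to \mathbb{R}$, I would declare $\tau_n(e) = \phi([e])$ for each minimal projection $e$ of each summand of $B_n$ and extend linearly to obtain a positive trace on $B_n$. Compatibility under the inclusions $B_n \hookrightarrow B_{n+1}$ is immediate from the functoriality of $K_0$, so these assemble into a trace $\tau_0$ on the dense $\ast$-algebra $\bigcup_n B_n$. Extending $\tau_0$ to $B$ via the standard Pedersen lower semicontinuous regularization
\[
\tau(x) \;=\; \sup\bigl\{\tau_0(a) : a \in \textstyle\bigcup_n B_n,\; 0 \leq a \leq x\bigr\}, \qquad x \in B^+,
\]
produces the desired trace, and a direct computation on projections gives $\tau_* = \phi$. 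The main obstacle is precisely this last step: verifying that the regularization is genuinely a trace (additivity and the tracial identity on elements outside $\bigcup_n B_n$), is lower semicontinuous, and is densely defined. Density of definition reduces to showing $\tau$ is finite on every projection of $B$, which holds because in an AF algebra every projection is Murray--von Neumann equivalent to a projection in some $B_n$ on which $\tau_0$ is already finite. These verifications are standard but somewhat technical and can be extracted from the general machinery in Pedersen's book.
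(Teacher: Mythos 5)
There is a genuine gap, and it is the same false assertion that undermines both your injectivity argument and the regularization formula you use for surjectivity: in general a positive element of $B$ \emph{cannot} be approximated from below by positive elements of $\bigcup_n B_n$, and the supremum in your formula can return the wrong value. Take $B = \mathbb K$ with $B_n = M_n(\mathbb C)$ sitting in the upper-left corner, and let $x = vv^*$ be the rank-one projection onto $\mathbb C v$ for a unit vector $v$ with all coordinates nonzero in the standard basis. If $0 \leq a \leq x$ then $a$ annihilates $v^\perp$, so its range is contained in $\mathbb C v$; but the range of any $a \in M_n$ lies in $\operatorname{span}\{e_1,\dots,e_n\}$, which meets $\mathbb C v$ only in $\{0\}$. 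Thus the only competitor in your supremum is $a=0$, and the regularized functional assigns $\tau(x) = 0$, whereas the correct value is $\phi([x]) = \phi(1) > 0$. In particular the resulting functional is not a trace at all, since $x$ is Murray--von Neumann equivalent to $e_{11}$ on which $\tau_0$ is strictly positive. The root of the difficulty is that $\bigcup_n B_n$, although a dense $*$-subalgebra, is \emph{not} a dense hereditary ideal of $B$, so Pedersen's lower semicontinuous regularization theory cannot be applied to $\tau_0$ as it stands; your injectivity argument quotes the same approximation claim and therefore has the same hole.

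The standard repair is to pass to the unital corners rather than to the finite-dimensional layers. Set $q_n = 1_{B_n}$, an increasing approximate unit of projections. Each $q_n B q_n$ is a unital AF algebra whose projections span a dense subspace, so the restriction of $\phi$ to $K_0(q_n B q_n)$ determines a \emph{bounded} tracial functional $\tau^{(n)}$ on $q_n B q_n$ of norm $\phi([q_n]) < \infty$, and compatibility of these functionals under the inclusions $q_n B q_n \subseteq q_m B q_m$, $n \leq m$, is immediate. Using the trace identity of $\tau^{(m)}$ inside $q_m B q_m$ together with $q_m q_n = q_n$, one checks that $n \mapsto \tau^{(n)}(q_n x q_n)$ is nondecreasing for every $x \in B^+$, and one defines
$$
\tau(x) \;=\; \sup_n \tau^{(n)}(q_n x q_n) \ .
$$
This $\tau$ is additive, positive, finite on every projection and hence densely defined, lower semicontinuous, tracial, and satisfies $\tau_* = \phi$. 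The same corner device gives a clean injectivity proof: lower semicontinuity and $\tau\big(x^{1/2}q_n x^{1/2}\big) = \tau(q_n x q_n)$ show that $\tau$ is determined by its bounded restrictions to the corners $q_n B q_n$, and these are determined by their values on projections, i.e.\ by $\tau_*$.
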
 
% \begin{proof} Left to the reader.
 %\end{proof}

\section{Proof of Theorem \ref{20-12-20c} when $K$ is compact}\label{compact}

The case where $K$ is compact is sufficiently different from the case where this is not the case, that we divide the proof accordingly. However, the general setup which we now describe will be the same in the two cases.

\subsection{Setting the stage}\label{stage}

Let $A$ be an infinite dimensional simple unital AF algebra. The tracial state space $T(A)$ of $A$ is a compact metrizable Choquet simplex which we denote by $\Delta$. We shall handle $\Delta$ as an abstract simplex and when an element $x \in \Delta$ is considered as a trace on $A$ we denote it by $\tr_x$. The $K_0$-group $K_0(A)$ is a simple non-cyclic dimension group and we denote by $K_0(A)^+$ the semi-group of positive elements of $K_0(A)$. The unit $1$ in $A$ represents an element $[1] \in K_0(A)$ which is an order unit in $K_0(A)$. To simplify notation we set
$$
(K_0(A),K_0(A)^+,[1]) \ = \ (H,H^+, u) \ .
$$
Each element $x \in \Delta$ defines in a canonical way a homomorphism ${\tr_x}_* : H \to \mathbb R$, and since $A$ is AF the state space 
$$
 \left\{\phi \in \Hom(H,\mathbb R) : \ \phi(H^+) \subseteq [0,\infty) , \ \phi(u) = 1 \right\}
$$
of $(H,H^+,u)$ is affinely homeomorphic to $\Delta$ via the map $\Delta \ni x \mapsto {\tr_x}_*$. Let $\Aff(\Delta)$ denote the space of real-valued continuous affine functions on $\Delta$. There is then a homomorphism
$$
\theta : H \to \Aff(\Delta)
$$
defined such that $\theta(h)(x) = {\tr_x}_*(h)$. By Theorem 4.11 in \cite{GH},
\begin{itemize}
\item $\theta(H)$ is norm-dense in $\Aff(\Delta)$, and
\item $H^+ \ = \ \left\{h \in H: \ \theta(h)(x) > 0 \ \ \forall x \in \Delta \right\} \cup \{0\}$ .
\end{itemize}
 Set
$$
G \ = \ \oplus_{\mathbb Z} H \ ,
$$  
and define an automorphism $\rho$ of $G$ such that
\begin{equation}\label{rho}
\rho\left((g_n)_{n \in \mathbb Z}\right) \ = \left(g_{n+1}\right)_{n \in \mathbb Z} \  .
\end{equation}
The main difference between the two cases, compact and not compact, is in the definition of the ordering in $G$, and although many arguments are the same in the two cases, and will only be given once, we make a clear distinction between the two cases in order to increase the readability.

\subsection{The first construction}\label{first}

We assume now that the set $K$ from Theorem \ref{20-12-20c} is compact and set $L = \left\{e^{\beta} : \ \beta \in K\right\}$. Let $A(\Delta \times L)$ denote the real vector space of functions $f \in C_{\mathbb R}(\Delta \times L)$ for which the map $\Delta \ni x \mapsto f(x,t)$ is affine for all $t \in L$; clearly a norm-closed subspace of $C_{\mathbb R}(\Delta \times L)$. Define 
$\Sigma : G \to A(\Delta \times L)$
such that
$$
\Sigma\left(g\right)(x,t) \ = \ \sum_{m \in \mathbb Z} \theta(g_m)(x)t^{m}  \ .
$$
Let $F$ be a non-empty closed face in $\Delta$ and set
$$
G^+ \ = \ \left\{ g \in G : \ \Sigma(g)(x,t) \ > \ 0 \ \ \forall (x,t) \in (F \times L) \cup (\Delta \times \{1\}) \right\} \cup \{0\} \  ;
$$
a semi-group in $G$ which turns $G$ into a partially ordered abelian group. We aim to show that $G$ is a simple dimension group.

For $(k,l) \in \mathbb Z^2$ let $g_{k,l} \in C_{\mathbb R}(L)$ be the function 
$$
g_{k,l}(t) \ = \ t^k-t^l \ .
$$

\begin{lemma}\label{15-12-20xx} $\Span \{g_{k,l} : \ (k,l) \in \mathbb Z^2 \}$ is dense in $\left\{ h \in C_{\mathbb R}(L): \ h(1) = 0 \right\}$.
\end{lemma}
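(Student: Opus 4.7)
The plan is to identify $\Span\{g_{k,l}\}$ with the Laurent polynomials that vanish at $1$, and then invoke Stone--Weierstrass.

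First I would check the algebraic identity
$$
\Span\{g_{k,l} : (k,l) \in \mathbb Z^2\} \ = \ \left\{p \in \mathbb R[t,t^{-1}] : \ p(1) = 0\right\}.
$$
The inclusion ``$\subseteq$'' is immediate since each $g_{k,l}(1)=0$. Conversely, if $p(t) = \sum_k a_k t^k$ is a Laurent polynomial with $\sum_k a_k = 0$, then fixing any integer $l_0$ and using $\sum_k a_k t^{l_0} = 0$ gives
$$
p(t) \ = \ \sum_k a_k\bigl(t^k - t^{l_0}\bigr) \ = \ \sum_k a_k\, g_{k,l_0}(t),
$$
so $p$ lies in the span.

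Next, since $0 \in K$ we have $1 = e^0 \in L$, so ``$h(1) = 0$'' is meaningful; moreover $L \subseteq (0,\infty)$ is compact, so the unital subalgebra of $C_{\mathbb R}(L)$ generated by the coordinate function $t$ separates points and by the Stone--Weierstrass theorem is uniformly dense in $C_{\mathbb R}(L)$. Hence given $h \in C_{\mathbb R}(L)$ with $h(1) = 0$ and $\epsilon > 0$, pick an ordinary polynomial $p$ with $\|p - h\|_\infty < \epsilon/2$. Then $|p(1)| = |p(1) - h(1)| < \epsilon/2$, and the Laurent polynomial $q(t) := p(t) - p(1)$ satisfies $q(1) = 0$ and $\|q - h\|_\infty < \epsilon$. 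By the first step, $q \in \Span\{g_{k,l}\}$, which gives the claimed density.

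There is no real obstacle here; the only thing to verify is that $L$ actually lives in $(0,\infty)$ and contains $1$, which is automatic from $L = \exp(K)$ and $0 \in K$. The argument is a standard Stone--Weierstrass exercise combined with the obvious algebraic description of the span.
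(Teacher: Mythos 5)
Your proof is correct and takes a cleaner route than the paper. The paper's argument is indirect: it observes $tg_{k,l}=g_{k+1,l+1}$, deduces that the span contains every $P(t)(t-t^{-1})$ with $P$ an ordinary polynomial, concludes by Weierstrass that the closure contains $C_{\mathbb R}(L)\cdot(t-t^{-1})$, and then handles a general $h$ with $h(1)=0$ by first replacing it with an $h_1$ vanishing in a neighborhood of $1$ so that $h_1/(t-t^{-1})$ makes sense as a continuous function. You instead pin down the span exactly: it equals the set of Laurent polynomial functions vanishing at $1$, via the identity $p=\sum_k a_k g_{k,l_0}$ whenever $\sum_k a_k=0$. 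This is a sharper structural fact than what the paper extracts (note that not every $g_{k,l}$ is even divisible by $t-t^{-1}$, so the paper's intermediate set $C_{\mathbb R}(L)(t-t^{-1})$ is genuinely a detour), and it lets you finish with a one-line Stone--Weierstrass argument plus the constant shift $q=p-p(1)$, with no need for the near-$1$ cutoff. Both arguments are correct; yours buys simplicity and an exact description of the span, whereas the paper's only identifies the closure. One cosmetic remark: your $q=p-p(1)$ is in fact an ordinary polynomial, not merely a Laurent polynomial, though calling it Laurent is harmless since the point is only that $q(1)=0$ and $q$ lies in $\Span\{g_{k,l}\}$.
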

\begin{proof} Note that $tg_{k,l}(t) = g_{k+1,l+1}(t)$. It follows that $\Span \{g_{k,l} : \ (k,l) \in \mathbb Z^2 \}$ contains all functions of the form $P(t)\left(t - t^{-1}\right)$ where $P$ is a polynomial. It follows therefore from Weierstrass' theorem that the closure of $\Span \{g_{k,l} : \ (k,l) \in \mathbb Z^2 \}$ contains all functions in $C_{\mathbb R}(L)(t -t^{-1})$. Let $ h \in C_{\mathbb R}(L)$ satisfy that $ h(1) = 0$, and let $\epsilon > 0$. There is a function $h_1 \in C_{\mathbb R}(L)$ such that $h_1(t) = 0$ for all $t$ in a neighborhood of $1$ and $\sup_{t \in L}|h(t)-h_1(t)| \leq \epsilon$. Since 
$$
h_1(t) = \frac{h_1(t)}{t -t^{-1}} (t-t^{-1})
$$ 
it follows first that $h_1$ is in the closure of $\Span \{g_{k,l} : \ (k,l) \in \mathbb Z^2 \}$, and then that so is $h$.
\end{proof}

When $h \in H$ and $k,l \in \mathbb Z, \ k \neq l$, we denote in the following by $[[h]]^{k,l}$ the element of $G$ such that
$$
\left( [[h]]^{k,l}\right)_i \ = \ \begin{cases} h, & \ i = k \\ -h , & \ i= l\\ 0, & \ i \notin \{k,l\} \ , \end{cases} 
$$
and by $[[h]]\in G$ the element with $[[h]]_0 = h$ and $[[h]]_i = 0$ when $i \neq 0$.

\begin{lemma}\label{16-12-20xx} $\Sigma(G)$ is dense in $A(\Delta \times L)$.
\end{lemma}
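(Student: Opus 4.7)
The plan is to approximate $\Sigma(G)$ in two stages: first show that the algebraic tensor product $\Aff(\Delta) \otimes C_{\mathbb R}(L)$ is dense in $A(\Delta \times L)$, then show that $\Sigma(G)$ is dense in this tensor product, using Lemma \ref{15-12-20xx} as the key input. For the first stage, given $f \in A(\Delta \times L)$ and $\epsilon > 0$, uniform continuity of $f$ on the compact space $\Delta \times L$ furnishes a continuous partition of unity $\{\phi_i\}_{i=1}^n$ on $L$ and points $t_i$ in the support of $\phi_i$ so that $(x,t) \mapsto \sum_i f(x,t_i)\phi_i(t)$ approximates $f$ within $\epsilon$. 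This approximant lies in $\Aff(\Delta) \otimes C_{\mathbb R}(L)$ since each slice $f(\cdot, t_i)$ is affine in $x$.

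For the second stage, consider an elementary tensor $(x,t) \mapsto a(x) h(t)$ with $a \in \Aff(\Delta)$ and $h \in C_{\mathbb R}(L)$, and decompose
\[
a(x) h(t) \ = \ h(1) a(x) + a(x)\bigl(h(t) - h(1)\bigr).
\]
Using the density of $\theta(H)$ in $\Aff(\Delta)$ from Subsection \ref{stage}, pick $k \in H$ with $\theta(k) \approx h(1)a$; then $\Sigma([[k]])(x,t) = \theta(k)(x)$ is constant in $t$ and approximates the first summand. Applying Lemma \ref{15-12-20xx} to the continuous function $h - h(1)$, which vanishes at $1$, yields an approximation $h - h(1) \approx \sum_j c_j g_{k_j, l_j}$ with $c_j \in \mathbb R$ and $k_j \neq l_j$. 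Approximating each $c_j a \in \Aff(\Delta)$ by $\theta(h_j)$ for suitable $h_j \in H$ then produces
\[
\Sigma\Bigl(\sum_j [[h_j]]^{k_j, l_j}\Bigr)(x,t) \ = \ \sum_j \theta(h_j)(x)\bigl(t^{k_j} - t^{l_j}\bigr)
\]
as the required approximation to the second summand. The sum of the two approximations lies in $\Sigma(G)$ and is close to $a \otimes h$.

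Finite sums of elementary tensors are handled by adding the individual approximations together. I do not foresee a serious obstacle: the content of the lemma reduces to the two already-available facts, namely the density of $\theta(H)$ in $\Aff(\Delta)$ coming from Theorem 4.11 of \cite{GH}, and the spanning statement of Lemma \ref{15-12-20xx}. The only point that requires attention is the natural split of each $h \in C_{\mathbb R}(L)$ into its value at $1$ and a remainder that vanishes at $1$; this split is precisely what matches the hypothesis $h(1)=0$ in Lemma \ref{15-12-20xx} and explains why that preparatory lemma was proved immediately before.
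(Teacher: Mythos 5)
Your argument is correct and follows essentially the same route as the paper: both proofs first reduce to elementary tensors $a\otimes h$ via a partition of unity on $L$, then approximate such a tensor in $\overline{\Sigma(G)}$ by splitting $h$ into its value at $1$ plus a remainder vanishing at $1$, handling the constant part via density of $\theta(H)$ in $\Aff(\Delta)$ and the remainder via Lemma \ref{15-12-20xx}. The paper compresses the second step into a single sentence, but the decomposition you spell out is precisely what is implicit there.
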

\begin{proof} Note that $\Sigma\left([[h]]^{k,l}\right)(x,t) \ = \ \theta(h)(x)g_{k,l}(t)$ and $\Sigma([[h]])(x,t) = \theta(h)(x)$. Hence $\Sigma(G)$ contains all functions of the form 
$$
\theta(h)(x)(t^k-t^l) \ ,
$$ 
where $k,l \in \mathbb Z$, as well as all functions of the form $\theta(h)(x)$ when $h \in H$. Since $\theta(H)$ is dense in $\Aff(\Delta)$ it follows from Lemma \ref{15-12-20xx} that the closure $\overline{\Sigma(G)}$ of $\Sigma(G)$ contains all functions of the form $(x,t) \mapsto a(x)f(t)$ where $a \in \Aff(\Delta)$ and $f \in C_{\mathbb R}(L)$. A wellknown partition of unity argument shows then that $A(\Delta \times L) = \overline{\Sigma(G)}$.
%But $\mathbb R \Aff(\Delta) \subseteq \Aff(\Delta)$ and hence $\overline{s(G)}$ contains all functions of the form $(x,t) \mapsto A(x)f(t)$ where $A \in \Aff(\Delta)$ and $f \in C_{\mathbb R}(L)$. Let $h \in A(\Delta \times L)$. There is a finite cover $U_i, i \in I$, of $L$ by open sets and points $t_i \in U_i$ such that $\left|h(x,t) - h(x,t_i)\right| \leq \epsilon$ for all $x \in \Delta$ and all $t \in U_i$. Let $\{\varphi_i\}$ be a partition of unity in $C_{\mathbb R}(L)$ subordinate to $\{U_i\}_{i \in I}$. Then 
%$$
%\left| h(x,t) - \sum_{i \in I}h(x,t_i)\varphi_i(t)\right| \leq \epsilon
%$$
%for all $(x,t) \in \Delta \times L$. Since $(x,t) \mapsto h(x,t_i)\varphi_i(t)$ is in $\overline{s(G)}$ for all $i$ it follows that so is $h$.
\end{proof}

\begin{lemma}\label{21-12-20} Let $a \in \Aff(F)$. For each $x_0 \in \Delta \backslash F$ there is a function $b \in \Aff(\Delta)$ such that $b|_F = a$ and $b(x_0) > a(y)$ for all $y \in F$.
\end{lemma}
\begin{proof} Choose $u,v \in \mathbb R$ such that $u \leq a(y) \leq v$ for all $y \in F$. By the Hahn-Banach separation theorem there is a $c \in \Aff (\Delta)$ such that $c(y) \leq 0$ for all $y \in F$ and $c(x_0)> 0$. Multiplying by a positive number we can arrange that $c(x_0) >  v -u $. Set $C  = \sup_{x \in \Delta} c(x)$. Then $c(y) \leq a(y) - u \leq v-u + C$ for all $y \in F$ and $c(x) \leq v-u+C$ for all $x \in \Delta$. It follows from Edwards separation theorem, Theorem 3 in \cite{Ed} or Corollary 7.7 in \cite{AE}, that there is a $b' \in \Aff(\Delta)$ such that $b'|_F = a-u$ and $c \leq b' \leq v-u +C$ on $\Delta$. Set $b = b' +u$. Then $b|_F =a$ and $b(x_0) \geq c(x_0) + u > v$.
\end{proof}

\begin{lemma}\label{18-12-20} $G$ is a simple dimension group.
\end{lemma}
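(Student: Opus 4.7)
My plan is to verify that $(G,G^+)$ satisfies the three defining properties of a simple countable dimension group: unperforation, the Riesz interpolation property, and the order-unit property for every non-zero positive element. The Effros--Handelman--Shen theorem will then identify $G$ as a dimension group, and the last property upgrades this to simplicity. Write $X = (F\times L) \cup (\Delta \times \{1\})$, a compact subset of $\Delta \times L$. Unperforation is immediate: if $ng \in G^+$ with $n \geq 1$ and $ng \neq 0$, then $\Sigma(g) = n^{-1}\Sigma(ng) > 0$ on $X$, so $g \in G^+$. For the order-unit property, given $g \in G^+ \setminus \{0\}$, continuity of $\Sigma(g)$ and its strict positivity on the compact set $X$ provide a uniform lower bound $\Sigma(g) \geq \delta > 0$; since $\Sigma(h)$ is bounded on $X$ for any $h \in G$, the inequality $N\Sigma(g) \pm \Sigma(h) > 0$ holds on $X$ for all sufficiently large $N$, so $Ng \pm h \in G^+$ and $g$ is an order unit.

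The heart of the proof is Riesz interpolation. Given $a_1, a_2 \leq b_1, b_2$ in $G$, I need $c$ with $a_i \leq c \leq b_j$ for all $i,j$. I would first dispatch the degenerate cases: if some $a_i = b_j$, then $c = a_i = b_j$ works, since the remaining two inequalities follow by transitivity. Otherwise all four differences $b_j - a_i$ are non-zero, and compactness of $X$ yields some $\epsilon > 0$ with $\Sigma(b_j - a_i) \geq \epsilon$ on $X$ for all $i,j$. The strategy is then to produce $f \in A(\Delta \times L)$ with
\[
\Sigma(a_i) + \epsilon/3 \ \leq \ f \ \leq \ \Sigma(b_j) - \epsilon/3 \quad \text{on } X \text{ for all } i,j,
\]
and then to apply the density of $\Sigma(G)$ in $A(\Delta \times L)$ from Lemma~\ref{16-12-20xx} to pick $c \in G$ with $\|\Sigma(c) - f\|_\infty < \epsilon/3$; the resulting strict inequalities $\Sigma(a_i) < \Sigma(c) < \Sigma(b_j)$ on $X$ give $a_i \leq c \leq b_j$ as required.

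To construct $f$ I would combine Edwards' separation theorem with a continuous selection. Edwards' theorem applied to the Choquet simplex $\Delta$ at $t = 1$ yields $f_1 \in \Aff(\Delta)$ satisfying $\Sigma(a_i)(\cdot, 1) + \epsilon/2 \leq f_1 \leq \Sigma(b_j)(\cdot, 1) - \epsilon/2$ on $\Delta$. For each $t \in L$, Edwards' theorem on the Choquet simplex $F$, combined with the surjectivity of the restriction $\Aff(\Delta) \to \Aff(F)$ supplied by Lemma~\ref{21-12-20}, produces a non-empty convex closed subset $\Phi(t) \subseteq \Aff(\Delta)$ consisting of those $h$ whose restriction to $F$ lies in the $\epsilon/3$-box determined by $\Sigma(a_i)(\cdot, t)|_F$ and $\Sigma(b_j)(\cdot, t)|_F$. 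The multifunction $\Phi$ is lower semicontinuous in $t$, so Michael's selection theorem yields a continuous section $t \mapsto g_t$. Finally, set $f(x,t) = \chi(t) f_1(x) + (1-\chi(t)) g_t(x)$ where $\chi \colon L \to [0,1]$ is continuous with $\chi(1) = 1$ and support contained in a neighbourhood $U$ of $1$ chosen small enough that the continuity of $\Sigma(a_i)(\cdot, t)|_F$ and $\Sigma(b_j)(\cdot, t)|_F$ in $t$ keeps $f_1|_F$ inside the $\epsilon/3$-box at $t$ for $t \in U$.

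The main obstacle I anticipate is the continuous selection and the gluing near $t = 1$: verifying lower semicontinuity of $\Phi$ (delicate near $t = 1$, where the shape of the bounds varies in $t$) and controlling the partition-of-unity interpolation uniformly on $U$ so that the convex combination $\chi f_1 + (1-\chi)g_t$ stays in the $\epsilon/3$-box on $F \times U$. The explicit extensions from Lemma~\ref{21-12-20} should allow one to build $g_t$ directly from Edwards' selections on $F$, which may streamline the construction and avoid the need for Michael's theorem in full generality.
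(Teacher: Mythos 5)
Your overall strategy coincides with the paper's: dispose of the degenerate case, produce a function $f \in A(\Delta \times L)$ lying strictly between the $\Sigma(a_i)$ and the $\Sigma(b_j)$ with a definite margin on the compact set $(F\times L)\cup(\Delta\times\{1\})$, and then invoke the density statement Lemma~\ref{16-12-20xx} to replace $f$ by some $\Sigma(c)$; the order-unit and unperforation checks are as in the paper. Where you diverge is in the mechanism used to build $f$. You set up an $\Aff(\Delta)$-valued multifunction $\Phi(t)$ of admissible extensions and invoke Michael's selection theorem, then glue in the $t=1$ construction with a bump function. The paper instead exploits the compactness of $L$ more directly: it covers $L$ by finitely many open sets; on each, one affine function (obtained from the Riesz interpolation property of $\Aff(F)$ in the strict order, or of $\Aff(\Delta)$ for the piece containing $t=1$, and then extended from $F$ to $\Delta$ via Lemma~\ref{21-12-20}) serves uniformly over the piece; these are patched with a finite partition of unity. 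That finite-cover argument avoids the issue you yourself flag: lower semicontinuity of $\Phi$ is not automatic when $\Phi(t)$ is cut out by closed constraints at the exact margin $\epsilon/3$, since a boundary element of $\Phi(t_0)$ can leave $\Phi(t)$ for $t$ arbitrarily close to $t_0$; you would need to pass to open constraints, or argue that each box has interior in $\Aff(\Delta)$ so that a selection can be pushed inward. Your closing remark --- that explicit extensions via Lemma~\ref{21-12-20} should let one avoid Michael's theorem --- is precisely the paper's route; with that simplification the two arguments coincide.
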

\begin{proof} It is easy to see that $G$ is unperforated and that $G^+ \cap (-G^+) = \{0\}$. Since $(F \times L) \cup (\Delta \times \{1\})$ is compact it is also clear that every non-zero element of $G^+$ is an order unit for $G$, so what remains is to show that $G$ has the Riesz interpolation property. Let $c^i,d^i, \ i \in \{1,2\}$, be elements of $G$ such that $c^i \leq d^j$ for all $i,j \in \{1,2\}$. If $c^{i'} = d^{j'}$ for some $i',j'$, set $z = c^{i'}$. Then $c^i \leq z \leq d^j$ for all $i,j$. Assume instead that $c^i < d^j$ for all $i,j$. Then $\Sigma(c^i)(x,t)   < \Sigma(d^j)(x,t)$ for all $i,j$ and all $(x,t) \in (F\times L) \cup (\Delta \times \{1\})$. Define $c,d \in C_{\mathbb R}(\Delta \times L)$ such that
$$
c(x,t) \ = \ \max \{ \Sigma(c^1)(x,t),\Sigma(c^2)(x,t) \}
$$
and
$$
d(x,t) \ = \   \min \{ \Sigma(d^1)(x,t),\Sigma(d^2)(x,t) \} \ .
$$
Let $\delta > 0$. If $\delta$ is small enough
$$
c(x,t) + \delta \ < \ d(x,t) - \delta
$$
for all $(x,t) \in (F\times L) \cup (\Delta \times \{1\})$. Fix $t \in L \backslash\{1\}$. As a closed face in the simplex $\Delta$ the set $F$ is itself a Choquet simplex and hence the space $\Aff(F)$ has the Riesz interpolation property for the usual order, as well as for the strict order, cf. Lemma 3.1 in \cite{EHS}. It follow therefore that there is a function $a_t \in \Aff(F)$ such that 
$$
c(y,t) + \delta < a_t(y) < d(y,t) - \delta
$$ 
for all $y \in F$. When $t = 1$ it follows in the same way that there is a function $a_1 \in \Aff(\Delta)$ such that
$$
c(x,1) + \delta \ < \ a_1(x) \ <  \ d(x,1) - \delta
$$
for all $x \in \Delta$. We can then construct a finite cover $V_i, i = 1,2,\cdots, N$, of $L$ by open sets and elements $t_i \in V_i$ with $t_1 =1$ such that
$$
\left| c(x,t) - c(x,t_i)\right| \leq \frac{\delta}{2}
$$
and
$$
\left| d(x,t) - d(x,t_i)\right| \leq \frac{\delta}{2}
$$
for all $t \in V_i$, all $x \in \Delta$ and all $i$. We arrange, as we can, that $1 \notin V_j , \ j \geq 2$. For $i \in \{2,3,\cdots , N\}$ we use Lemma \ref{21-12-20} to get $b_i\in \Aff(\Delta)$ such that $b_i|_F = a_{t_i}$, and we set $b_1 = a_1$. Let $\{\varphi_i\}_{i=1}^N$ be a partition of unity on $L$ which is subordinate to $\{V_i\}_{i=1}^N$. Define 
$a \in A(\Delta \times L)$ such that
$$
a(x,t) \ = \ \sum_{i=1}^N b_i(x)\varphi_i(t) \ .
$$ 
For $y \in F$,
\begin{align*}
&a(y,t) \ = \ \sum_{i=1}^N a_{t_i}(y)\varphi_i(t) \ \leq \ \sum_{i=1}^N (d(y,t_i ) - \delta)\varphi_i(t) \ \\
&\leq \ \sum_{i=1}^N (d(y,t) - \frac{\delta}{2}) \varphi_i(t) \ = \ d(y,t) - \frac{\delta}{2} \ ,
\end{align*}
and similarly $c(y,t) + \frac{\delta}{2} \ \leq \ a(y,t)$.
For $x \in \Delta$ we find that
\begin{align*}
&c(x,1) + \delta \ < \ a_1(x) \ = \ a(x,1) \ < \ d(x,1) - \delta \ .
\end{align*}
It follows that
$$
\Sigma(c^i)(x,t) < a(x,t) < \Sigma(d^j)(x,t)
$$ 
for all $i,j$ and all $(x,t) \in (F\times L) \cup (\Delta \times \{1\})$. Let $\epsilon > 0$. By Lemma \ref{16-12-20xx} there is a $g \in G$ such that
$$
\sup_{(x,t) \in \Delta \times L} \left|\Sigma(g)(x,t) \ - \ a(x,t)\right| \ \leq \ \epsilon \ .
$$
If $\epsilon$ is small enough it follows that $c^i \leq g \leq d^j$ in $G$ for all $i,j \in \{1,2\}$.
\end{proof}

Note that $\rho(G^+) = G^+$ where $\rho$ is the automorphism $\rho$ of $G$ defined in \eqref{rho}. It follows that $\rho$ is an automorphism of $(G,G^+)$ and then from Lemma \ref{18-12-20} and \cite{EHS} that there is an AF algebra $B$ whose $K_0$-group and dimension range is isomorphic to $(G ,G^+)$. Furthermore, it follows from \cite{E1} that $B$ is simple and stable, and that there is an automorphism $\gamma$ of $B$ such that $\gamma_* = \rho$ under the identification $K_0(B) = G$. It was shown by Nistor, \cite{Ni}, that $(G,G^+)$ has large denominators  and we can therefore use Lemma \ref{28-10-20} to choose $\gamma$ such that it has the following additional properties:
\begin{poem}\mbox{}\\[-\baselineskip]
  \begin{enumerate}\label{listrefx}
    \item The restriction map $\mu \ \mapsto \ \mu|_{B}$ is a bijection from the traces $\mu$ on $   B \rtimes_{\gamma} \mathbb Z$ onto the $\gamma$-invariant traces on $B$. \\
    \item $B \rtimes_{\gamma} \mathbb Z$ is stable. \\   
    \item $B \rtimes_{\gamma} \mathbb Z$ is $\mathcal Z$-stable.
    \end{enumerate}
\end{poem}

Set $C = B \rtimes_\gamma \mathbb Z$. No power of $\gamma$ is inner since $\gamma_*^k \neq \id_G$ for $k\neq 0$ and it follows therefore from \cite{Ki1} that $C$ is simple. The Pimsner-Voiculescu exact sequence shows that $K_1(C) = 0$ since $\id_G - \gamma_*$ is injective and that
$$
K_0(C) \ = \ \coker (\id_G-\gamma_*) \ = \ G/\left(\id_G - \gamma_*\right)(G) \ .
$$
Under this identification the map $K_0(B) \to K_0(C)$ induced by the inclusion $B \subseteq C$ is the quotient map $q : G \to  G/\left(\id_G - \gamma_*\right)(G)$. Hence $G^+/\left(\id_G - \gamma_*\right)(G) \subseteq K_0(C)^+$.

\begin{lemma}\label{27-10-20dxx} Let $d \in G$. Then $\sum_{m\in \mathbb Z} d_m = 0$ if and only if $d = (\id_G - \gamma_*)\left(g\right)$ for some $g \in G$.
\end{lemma}
\begin{proof}  It obvious that $\sum_{m\in \mathbb Z} d_m = 0$ when $d = (\id_G - \gamma_*)\left(g\right)$ for some $g \in G$. For the converse, choose $L\in \mathbb N$ so big that $x_n =0$ when $|n| \geq L$. Set $y_n = 0$ when $n > L$ and
$$
y_L = x_L , \ y_{L-1} = x_{L-1} + y_L, \ y_{L-2} = x_{L-2} +y_{L-1} , \ \text{etc} .
$$ 
Then 
$$
y_{-L} = x_{-L} + \sum_{i=1}^{2L} x_{-L+i} = \sum_{i=1}^{2L} x_{-L+i}  = 0 \ ,
$$
and hence $y_k = 0$ when $k < -L$. It follows that $(y_n)_{n \in \mathbb Z} \in G$ and $(\id_G -\gamma_*)\left((y_n)_{n\in \mathbb Z}\right) = (x_n)_{n \in \mathbb Z}$.
\end{proof}

It follows from Lemma \ref{27-10-20dxx} that we can define an injective homomorphism $S :  G/\left(\id_G - \gamma_*\right)(G) \to H$ such that 
$$
S\left(q(g)\right) \ = \ \sum_{m \in \mathbb Z} g_m \ \ \ \ \forall g \in G \ .
$$
$S$ is surjective since $S(q([[h]])) = h$ when $h \in H$, and hence an isomorphism with inverse $S^{-1}$ given by $S^{-1}(h) = q([[h]])$. Let $p \in B$ be a projection such that $[p] = [[u]]$ in $G$.

\begin{lemma}\label{18-12-20aaa} For $(y,t) \in F \times L$ and $x \in \Delta$ there are traces $\tau_{y,t}$ and $\tau_x$ on $B$ such that $\tau_{y,t}(p) = \tau_x(p) = 1$, and
$$
{\tau_{y,t}}_* (g) \ = \ \sum_{m \in \mathbb Z} \theta(g_m)(y)t^{m} 
$$
and
$$
{\tau_x}_*(g)  = \ \sum_{m \in \mathbb Z} \theta(g_m)(x)
$$
for all $g \in G$.
\end{lemma}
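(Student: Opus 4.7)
The plan is a direct application of Lemma~\ref{23-10-20g}: since $B$ is AF, traces on $B$ correspond bijectively to non-zero positive homomorphisms $K_0(B) = G \to \mathbb R$ via the formula $\tau_*[e] = \tau(e)$ on projections. So it suffices to produce the right positive homomorphisms on $G$.

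For $(y,t) \in F \times L$ define $\phi_{y,t} : G \to \mathbb R$ by
$$
\phi_{y,t}(g) \ = \ \Sigma(g)(y,t) \ = \ \sum_{m \in \mathbb Z} \theta(g_m)(y)\, t^m,
$$
and for $x \in \Delta$ define $\phi_x(g) = \Sigma(g)(x,1) = \sum_{m} \theta(g_m)(x)$. These are finite sums since each $g \in G = \oplus_{\mathbb Z} H$ has only finitely many non-zero components, and both maps are clearly group homomorphisms because $\theta$ and $\Sigma$ are.

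Positivity is immediate from the very definition of $G^+$: if $g \in G^+ \setminus \{0\}$ then $\Sigma(g)(z,s) > 0$ for all $(z,s) \in (F \times L) \cup (\Delta \times \{1\})$, which includes $(y,t)$ and $(x,1)$; and $\Sigma(0) = 0$. Hence $\phi_{y,t}$ and $\phi_x$ are positive, and they are non-zero since
$$
\phi_{y,t}([[u]]) \ = \ \theta(u)(y)\cdot t^0 \ = \ \tr_y(1_A) \ = \ 1, \qquad \phi_x([[u]]) \ = \ \theta(u)(x) \ = \ 1.
$$

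Invoking Lemma~\ref{23-10-20g} we obtain traces $\tau_{y,t}, \tau_x$ on $B$ with $(\tau_{y,t})_* = \phi_{y,t}$ and $(\tau_x)_* = \phi_x$. The normalization is then automatic: since $[p] = [[u]]$ in $G$,
$$
\tau_{y,t}(p) \ = \ (\tau_{y,t})_*([[u]]) \ = \ \phi_{y,t}([[u]]) \ = \ 1,
$$
and likewise $\tau_x(p) = 1$. There is no real obstacle here; the lemma is a bookkeeping consequence of how $G^+$ was cooked up in Section~\ref{first} precisely so that evaluation at points of $(F \times L) \cup (\Delta \times \{1\})$ yields positive homomorphisms.
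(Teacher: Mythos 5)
Your proof is correct and takes exactly the same route as the paper, which simply cites Lemma~\ref{23-10-20g}; you have filled in the routine verifications (positivity from the definition of $G^+$, non-vanishing and normalization via $\phi([[u]]) = \theta(u)(\cdot) = 1$) that the paper leaves implicit.
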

\begin{proof} This follows from Lemma \ref{23-10-20g}.
\end{proof}

\begin{lemma}\label{18-12-20b} Let $\phi : G \to \mathbb R$ be a positive homomorphism such that $\phi([[u]])= 1$.  Assume that $\phi \circ \gamma_* = s^{-1} \phi$ for some $s > 0$. Then $s \in L$ and when $s \neq 1$ it follows that $\phi = {\tau_{y,s}}_*$ for some $y \in F$, and when $s =1$ it follows that $\phi = {\tau_x}_*$ for some $x \in \Delta$.
\end{lemma}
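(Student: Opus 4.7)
The plan is to use the intertwining $\phi \circ \gamma_* = s^{-1}\phi$ to reduce $\phi$ to a positive functional on $H$, identify that functional with a point $y \in \Delta$ via the AF state-space picture, and then test $\phi$ against carefully chosen elements of $G^+$ to force $s \in L$ and $y \in F$.

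For $k \in \mathbb Z$ and $h \in H$, the element $\rho^{-k}[[h]] \in G$ has $h$ in position $k$ and zeros elsewhere, so $\Sigma(\rho^{-k}[[h]])(x,t) = \theta(h)(x)t^k$. Iterating $\phi \circ \rho = s^{-1}\phi$ yields $\phi(\rho^{-k}[[h]]) = s^k\psi(h)$, where $\psi(h) := \phi([[h]])$. For nonzero $h \in H^+$ the simplicity of $H$ gives $\theta(h) > 0$ on all of $\Delta$, so $[[h]] \in G^+$ and $\psi(h) \geq 0$; combined with $\psi(u) = \phi([[u]]) = 1$, $\psi$ is a positive $u$-normalized homomorphism on $H$. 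By Lemma \ref{23-10-20g} together with the affine homeomorphism between $\Delta$ and the state space of $(H,H^+,u)$, there is a unique $y \in \Delta$ with $\psi = \theta(\cdot)(y)$, and hence
\[
\phi(g) \;=\; \sum_{m \in \mathbb Z} s^m\,\theta(g_m)(y) \qquad (g \in G).
\]

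To see that $s \in L$, suppose otherwise; since $1 \in L$ this forces $s \neq 1$. By Stone--Weierstrass, Laurent polynomials in $t$ are dense in $C_{\mathbb R}(L \cup \{s\})$, so choose $P(t) = \sum_{k \in S} c_k t^k$ with $P > 0$ on $L$ and $P(s) < 0$. Using density of $\theta(H)$ in $\Aff(\Delta)$, approximate each constant $c_k$ by some $\theta(h_k)$ in sup norm and set $g = \sum_{k \in S} \rho^{-k}[[h_k]]$. Then $\Sigma(g)$ uniformly approximates $P$ on $\Delta \times L$; since compactness of $L$ gives $P \geq \epsilon_0 > 0$ on $L$, a small enough approximation error forces $\Sigma(g) > 0$ on $\Delta \times L \supseteq (F \times L) \cup (\Delta \times \{1\})$, placing $g \in G^+$, yet $\phi(g)$ is close to $P(s) < 0$---a contradiction.

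Assume now $s \in L \setminus \{1\}$ and suppose for contradiction $y \notin F$. By Lemma \ref{21-12-20} applied with $a \equiv 0$, pick $c \in \Aff(\Delta)$ with $c|_F = 0$ and $c(y) > 0$; choose $k \in \{\pm 1\}$ with $s^k > 1$ and set $D = c(y)(s^k - 1) > 0$. The function $f(x,t) := D/2 - c(x)(t^k - 1)$ equals $D/2$ on both $F \times L$ and $\Delta \times \{1\}$, yet $f(y,s) = -D/2$. Approximating $D/2$ and $c$ by $\theta(h_a)$ and $\theta(h_c)$ within sup-norm error $\delta$ and setting $g = [[h_a]] + [[h_c]] - \rho^{-k}[[h_c]]$, estimates using $|\theta(h_c)(y')| < \delta$ for $y' \in F$ and $M := \max_{t \in L}|t^k - 1|$ give $\Sigma(g) > 0$ on $(F \times L) \cup (\Delta \times \{1\})$ once $\delta(1+M) < D/2$, while $\phi(g)$ is close to $-D/2 < 0$---a second contradiction. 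Lemma \ref{18-12-20aaa} then identifies $\phi$ with ${\tau_{y,s}}_*$ when $s \neq 1$ and with ${\tau_y}_*$ when $s = 1$. The main technical point throughout is coordinating the approximation error: one must make $\|\theta(h_\ast) - (\text{target})\|_\infty$ small enough to preserve the strict positivity on the compact set $(F \times L) \cup (\Delta \times \{1\})$ defining $G^+$ while still securing a strictly negative evaluation at $(y,s)$; compactness of $L$ provides the uniform bounds that make this balance possible.
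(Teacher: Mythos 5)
Your proof is correct, and it takes a genuinely different route from the paper's. The paper first factors $\phi$ through $\Sigma$ to obtain a continuous linear functional $\phi'$ on the ambient Banach space $A(\Delta \times L)$ (relying on Lemma \ref{16-12-20xx} for density of $\Sigma(G)$), then uses the Riesz representation theorem on the measures $\mu_a$ and the intertwining with the multiplication operator $T(\psi)(x,t)=t^{-1}\psi(x,t)$ to locate $s$ and $x_0$. You instead exploit the coordinate structure of $G = \oplus_{\mathbb Z}H$ directly: iterating $\phi\circ\rho = s^{-1}\phi$ collapses $\phi$ onto the degree-zero summand, where the state space identification of $(H,H^+,u)$ with $\Delta$ (as set up in Section \ref{stage}) gives the explicit formula $\phi(g)=\sum_m s^m\theta(g_m)(y)$ \emph{before} knowing $s\in L$; the remaining constraints $s\in L$ and $y\in F$ are then forced by testing $\phi$ against hand-built elements of $G^+$. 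Your approach is more elementary in that it entirely sidesteps the Riesz representation theorem, the density of $\Sigma(G)$ in $A(\Delta\times L)$, and the separate verification that $\lambda$ is a normalized state on $\Aff(\Delta)$, replacing them by Stone--Weierstrass on the compact set $L\cup\{s\}$ and the basic density of $\theta(H)$ in $\Aff(\Delta)$. The paper's machinery has the advantage of being reused (in adapted form) in the unbounded case of Lemma \ref{23-04-21}, where the summand structure alone is less convenient; but for the present compact case your argument is shorter and arguably more transparent. One small remark on the $s\in L$ step: it is worth stating explicitly that since $L$ is compact and $s\notin L$, the point $s$ is isolated in $L\cup\{s\}$, so the $\{+1 \text{ on } L,\ -1 \text{ at } s\}$ target function is indeed continuous on $L\cup\{s\}$ before you invoke Stone--Weierstrass. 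With that noted, the error-budgeting in both contradiction constructions is sound.
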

\begin{proof} We claim that there is continuous linear map $\phi' : A(\Delta \times L) \to \mathbb R$ such that $\phi = \phi' \circ \Sigma$. Since $\theta(H)$ is dense in $\Aff(\Delta)$ there is a $c \in H$ such that $0 < \theta(c)(x) < N^{-1}$ for all $x \in \Delta$. Then $0 \leq N[[c]] \leq [[u]]$ in $G$ and hence $0 \leq \phi([[c]]) \leq \frac{1}{N}\phi\left([[u]]\right) = \frac{1}{N}$. Assume $g \in G$ and that $\Sigma(g) = 0$. Then $\pm g + [[c]]\in G^+$ and hence $-\frac{1}{N} \leq \phi(g) \leq \frac{1}{N}$. Letting $N \to \infty$ we conclude that $\phi(g) =0$, and it follows that there is a homomorphism $\phi' : \Sigma(G) \to \mathbb R$ such that $\phi' \circ \Sigma = \phi$. Let $h \in \Sigma(G)$; say $h = \Sigma(g)$, and let $k,l \in \mathbb N$ be natural numbers such that $|h(x,t)| < \frac{k}{l}$ for all $(x,t) \in \Delta \times L$. Then $k[[u]] \pm l g \in G^+$ and hence
$$
0  \ \leq  \ \phi(k[[u]] \pm l g) \ = \ k \pm l \phi'(h) \ .
$$
It follows that $\left|\phi'(h)\right| \leq \frac{k}{l}$, proving that $\phi'$ is Lipshitz continuous. Since $\Sigma(G)$ is dense in $A(\Sigma \times L)$ by Lemma \ref{16-12-20xx} it follows that $\phi'$ extends by continuity to a continuous linear map $\phi' : A(\Delta \times L) \to \mathbb R$, proving the claim. Let $T : A(\Delta \times L) \to A(\Delta \times L)$ denote the operator
$$
T(\psi)(x,t) = t^{-1}\psi(x,t) \ .
$$ 
Since $\Sigma \circ \gamma_*(g)(x,t) \ = \ t^{-1} \Sigma(g)(x,t)$ for all $g \in G$ and all $(x,t) \in \Delta \times L$, we find that
$$
s^{-1} \phi'( \Sigma(g)) \ = \ s^{-1}\phi(g) \ = \ \phi(\gamma_*(g)) \ = \ \phi' \circ \Sigma \circ \gamma_*(g)) \ = \ \phi'\left(T( \Sigma(g))\right) \ .
$$ 
It follows therefore from Lemma \ref{16-12-20xx} that 
\begin{equation}\label{18-20-12d}
s^{-1}\phi' = \phi' \circ T \ .
\end{equation}
When $a \in \Aff(\Delta)$ and $f \in C_{\mathbb R}(L)$ we denote by $a \otimes f$ the function $\Delta \times L \ni (x,t) \mapsto a(x)f(t)$. Assume $a(x) \geq 0$ for all $x \in \Delta$ and that $f(t) \geq 0$ for all $t \in L$. It follows then from Lemma \ref{16-12-20xx} and the definition of $G^+$ that $a \otimes f$ can be approximated by elements from $\Sigma(G^+)$, implying that $\phi'(a\otimes f) \geq 0$. There is therefore a bounded Borel measure $\mu_a$ on $L$ such that
$$
\phi'(a\otimes f) \ = \ \int_L f \ \mathrm{d}\mu_a \ \ \forall f \in C_{\mathbb R}(L) \ .
$$
It follows from \eqref{18-20-12d} that 
$$
 \int_L s^{-1} f(t) \ \mathrm{d}\mu_a(t) \ = \  \int_L t^{-1}f(t) \ \mathrm{d}\mu_a(t) 
$$
for all $f \in C_{\mathbb R}(L)$, and hence that 
$\mu_a(L \backslash \{s\}) = 0$. Since $\phi \neq 0$, not all $\mu_a$ can be zero and we conclude therefore that $s \in L$ and that
\begin{equation}\label{21-12-20d}
\phi'(a \otimes f) \ = \ \lambda(a)f(s) 
\end{equation}
for some $\lambda(a) \geq 0$ and all $f \in C_{\mathbb R}(L)$. Every element of $\Aff(\Delta)$ is the difference between two positive elements and it follows therefore that for all $a\in \Aff(\Delta)$ there is a real number $\lambda(a)$ such that \eqref{21-12-20d} holds for all $f \in C_{\mathbb R}(L)$. The resulting map $a \mapsto \lambda(a)$ is clearly linear and positive, and $\lambda(1) = 1$ since $1 = \phi([[u]]) = \phi'(1)$. This implies that there is an $x_0 \in \Delta$ such that $\lambda(a) = a(x_0)$ for all $a \in \Aff(\Delta)$. The elements of $\left\{a \otimes f : \ a \in \Aff(\Delta) , \ f \in C_{\mathbb R}(L) \right\}$ span a dense set in $A(\Delta \times L)$ and we find therefore that
\begin{equation}\label{18-12-20e}
\phi'(h) \ = \ h(x_0,s) \ \ \forall h \in A(\Delta \times L) \ .
\end{equation}
To see that $x_0 \in F$ when $s \neq 1$, assume for a contradiction that $x_0 \notin F$. By Lemma \ref{21-12-20} we can find $b \in \Aff(\Delta)$ such that $b|_F = 0$ and $b(x_0) > 0$, and since $s \neq 1$ we can find $f \in C_{\mathbb R}(L)$ such that $f(1) = 0$ and $f(s) = 1$. The function $b \otimes f$ vanishes on $(F \times L) \cup (\Delta \times \{1\})$. By Lemma \ref{16-12-20xx} and the definition of $G^+$ this implies that $\phi'(b \otimes f) = 0$, which contradicts \eqref{18-12-20e} since $b(x_0)f(s) > 0$. Hence $x_0 \in F$ when $s \neq 1$. It follows that
$$
\phi(g) \ = \ \phi'(\Sigma(g)) \ = \ \sum_{m \in \mathbb Z}\theta(g_m)(x_0)s^{m} \ ,
 $$
and that $x_0 \in F$ when $s \neq 1$.
\end{proof}

\subsection{The Elliott invariants of $pCp$ and $A$ are isomorphic}

\begin{lemma}\label{09-10-20axx} Set
$$
G^{++} = \{0\} \cup \left\{ g \in G : \ \sum_{m \in \mathbb Z} \theta(g_m)(x)\ > \ 0 \ \ \forall x \in \Delta \right\} \ .
$$
Then $K_0(C)^+ =  {G^{++}}/\left(\id_G - \gamma_*\right)(G)$ and 
$S$ takes $K_0(C)^+$ onto $H^+$.
\end{lemma}

\begin{proof} Let $g = (g_m)_{m\in \mathbb Z} \in G^{++} \backslash \{0\}$ and set $g^+ = [[\sum_{m \in \mathbb Z} g_m]] \in G$. Then $g^+ - g  \in (\id_G - \gamma_*)(G)$ by Lemma \ref{27-10-20dxx}, and since
$$
\sum_{m \in \mathbb Z} \theta(g^+_m)(x)t^{m} = \sum_{m \in \mathbb Z} \theta(g_m)(x) \geq \epsilon  \ ,
$$
for all $(x,t) \in \Delta \times L$, where $\epsilon = \min_{x \in \Delta} \sum_{m \in \mathbb Z} \theta(g_m)(x) > 0$, it follows that $g^+ \in G^+$. This shows that
$$
{G^{++}}/\left(\id_G - \gamma_*\right)(G) \subseteq {G^{+}}/\left(\id_G - \gamma_*\right)(G) \subseteq K_0(C)^+ \ .
$$
 Let $z \in K_0(C)^+\backslash \{0\} \subseteq G/\left(\id_G - \gamma_*\right)(G)$ and choose $g \in G$ such that $q(g) = z$. Let $x \in \Delta$. The trace $\tau_{x}$ from Lemma \ref{18-12-20b} is $\gamma$-invariant and there is therefore a trace $\tau_x'$ on $C$ such that $\tau_x'|_{B} = \tau_{x}$. Then ${\tau_x'}_*(z) > 0$ since $z > 0 $ in $K_0(C)$ and $C$ is simple. Hence
$$
0 <{\tau_x'}_*(z) = {\tau_{x}}_*(g) = \ \sum_{m \in \mathbb Z} \theta(g_m)(x) \ .
$$
It follows that $g\in G^{++}$ and $z \in {G^{++}}/\left(\id_G - \gamma_*\right)(G)$.  We conclude that 
$K_0(C)^+ =  {G^{++}}/\left(\id_G - \gamma_*\right)(G)$ and $S(K_0(C)^+) \subseteq H^+$. When $h \in H^+ \backslash \{0\}$, set $z = [[h]]$. Then $z \in G^{++}$, $q(z) \in K_0(C)^+$ and $S(q(z)) = h$. 
\end{proof}

\begin{lemma}\label{02-12-20bx} Let $p \in B$ be a projection representing $[[u]] \in G= K_0(B)$. The Elliott invariants of $A$ and $p\left( B \rtimes_\gamma \mathbb Z\right)p$ are isomorphic.
\end{lemma}
\begin{proof} Since $K_1(A) = K_1(pCp) = 0$ it suffices to supplement the isomorphism $S : (K_0(C),K_0(C)^+,[[u]]) \to (H,H^+,u)$ of partially ordered groups with order unit with an affine homeomorphism $T : T(pCp) \to T(A)$ such that
\begin{equation}\label{20-12-20}
T(\tau)_*(S(z)) = \tau_*(z) \ \ \ \forall (\tau,z) \in T(pCp) \times K_0(C) \ .
\end{equation}
%It follows from Lemma \ref{09-10-20axx} that $S$ is an isomorphism of ordered groups $S: K_0(pCp) = K_0(C) \to H$. Note that $S([p]) = u = [1] \in K_0(A)$. 
Let $\tau \in T(pCp)$. By Proposition 4.7 in \cite{CP} there is a unique trace $\tau'$ on $C$ such that $\tau'|_{pCp} = \tau$. Then $\tau'|_B$ is a $\gamma$-invariant trace on $B$ such that $\tau'(p) = 1$. It follows from Lemma \ref{23-04-21} that there is an $x \in \Delta$ such that ${(\tau'|_B)}_* = {\tau_x}_*$ on $K_0(B)$. 
 Since $x$ is uniquely determined by $\tau$ we can define $T : T(pCp) \to T(A)$ such that $T(\tau) = \tr_x$. Let $e \in K_0(A), \ 0 \leq e \leq [1]$. Then $0 \leq [[e]] \leq [[u]]$ in $G$ and hence $[[e]]$ is represented in $K_0(B)$ by a projection $e'$ from $pBp \subseteq pCp$. We find that
 $$
 {\tr_x}_*(e) = \theta(e)(x) = {\tau_x}_* ([[e]]) = {(\tau'|_B)}_*([[e]]) = \tau(e') \ ;
 $$ 
 an equality which shows that $T$ is affine and continuous. To see that $T$ is surjective let $x \in \Delta$. The trace $\tau_x$ from Lemma \ref{18-12-20aaa} is $\gamma$-invariant and defines therefore a trace $\tau'$ on $C$ such that $\tau'|_B = \tau_x$. Then $\tau'(p) = {\tau_x}_*([[u]]) = 1$ and hence $\tau'|_{pCp} \in  T(pCp)$ and $T\left(\tau'|_{pCp}\right) = \tr_x$. To see that $T$ is also injective consider to traces $\tau_i, i = 1,2$, in $T(pCp)$ and assume that $T(\tau_1) = T(\tau_2)$. Then ${(\tau'_1|_B)}_* = {(\tau'_2|_B)}_*$ on $G$ and hence $\tau'_1|_B = \tau'_2|_B$ by Lemma \ref{23-10-20g}. It follows from the first of the additional properties \ref{listrefx} that $\tau'_1 = \tau'_2$ and hence also that $\tau_1 = {\tau'_1}|_{pCp} = {\tau'_2}|_{pCp} = \tau_2$. We conclude that $T$ is an affine homeomorphism. Consider $\tau \in T(pCp)$ and $e \in K_0(A), \ 0 \leq e \leq 1$. Then $T(\tau) = \tr_x$ for some $x \in \Delta$ and, as observed above, $[[e]] \in G$ is represented by a projection $e'\in pBp$ such that ${\tr_x}_*(e) = \tau(e')$. Hence  
\begin{align*}
& T(\tau)_*\left( S(S^{-1}(e))\right)  = {\tr_x}_*(e) = \tau(e') = \tau_*(q([[e]]) =\tau_* \left(S^{-1}(e)\right) \ .
\end{align*}
Since the elements of $\left\{ S^{-1}(e) : \ e \in K_0(A), \ 0 \leq e \leq [1]\right\}$ generate $K_0(pCp)$ as a group, we conclude that \eqref{20-12-20} holds.
\end{proof}

\subsection{Conclusion}

\begin{lemma}\label{20-12-20a} $A$ is $*$-isomorphic to $p(B \rtimes_\gamma \mathbb Z)p$.
\end{lemma}
\begin{proof} By Lemma \ref{02-12-20bx} it remains only to show that $A$ and $pCp$ belong to a class of simple $C^*$-algebras for which the Elliott invariant is complete. For this we appeal to Corollary D of \cite{CETWW} and we must therefore check that both algebras are separable, unital, nuclear, $\mathcal Z$-stable and satisfy the UCT. Only $\mathcal Z$-stability is not well-known. For $A$ this follows from Theorem A and Corollary C in \cite{CETWW} and for $pCp$ it follows from the third of the Additional properties \ref{listrefx} combined with Corollary 3.2 in \cite{TW}. 
\end{proof}

\begin{remark}\label{14-12-20gx} Assume that $A$ is UHF. Then the algebras $pCp$ and $A$ occurring in the last proof only have one trace state and there is an alternative way to deduce the isomorphism $A \simeq pCp$. The path through the litterature takes more space to explain, but presents presumably a shorter argument: By Corollary 6.2 in \cite{MS} it suffices to show that both algebras are unital, separable, simple, infinite dimensional, nuclear, quasi-diagonal, satisfy the UCT and have strict comparison. Many of these properties are well-known for both algebras. What remains is to explain why they are quasi-diagonal and have strict comparison. Since $pCp$ and $A$ are both exact, simple and unital it follows from Corollary 4.6 in \cite{Ro} that $\mathcal Z$-stability implies strict comparison and hence it suffices to argue that both algebras are quasi-diagonal and $\mathcal Z$-absorbing. Concerning quasi-diagonality it is well-known that AF algebras have this property and that it is inherited by subalgebras, so a straightforward application of \cite{Br} shows that both algebras are quasi-diagonal. $C$ is $\mathcal Z$-absorbing by the third of the additional properties \ref{listrefx} and hence so is $pCp$ by Corollary 3.2 in \cite{TW} since $pCp$ is stably isomorphic $C$ by \cite{B}. Finally, since $A$ is approximately divisible it is also $\mathcal Z$-absorbing by Theorem 2.3 in \cite{TW}.
\end{remark}

\begin{remark}\label{28-04-21a} We haven't used the second of the Additional properties \ref{listrefx}. It was added because stability of $B \rtimes_{\gamma} \mathbb Z$ combined with Lemma \ref{20-12-20a} and \cite{B} implies that $B \rtimes_{\gamma} \mathbb Z \simeq A \otimes \mathbb K$; a fact which is nice to know, but it is not needed for the proof of Theorem \ref{20-12-20c}. \footnote{It has been pointed out to me that stability of $B \rtimes_{\gamma} \mathbb Z$ also follows from \cite{HR}.} 
\end{remark}

The dual action on $B \rtimes_\gamma \mathbb Z$ defines by restriction a $2\pi$-periodic flow on $p(B \rtimes_\gamma \mathbb Z)p$ which we denote by $\widehat{\gamma}$.

\begin{lemma}\label{02-12-20d} For $\beta \in \mathbb R$ there is a $\beta$-KMS state for $\widehat{\gamma}$ on $p(B \rtimes_\gamma \mathbb Z)p$ if and only if $\beta \in K$. For $\beta \in K \backslash \{0\}$ the simplex of $\beta$-KMS states for $\widehat{\gamma}$ is affinely homeomorphic to $F$ and for $\beta =0$ it consists of all trace states on $p(B \rtimes_\gamma \mathbb Z)p$ and hence is affinely homeomorphic to $\Delta$. 
\end{lemma}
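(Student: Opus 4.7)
The plan is to parametrize the $\beta$-KMS states on $pCp$ by pulling them up to $\beta$-KMS weights on $C$, restricting them to $B$, and then using the $K_0$-level analysis already encoded in Lemma \ref{18-12-20b}.

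First I would note that since $\widehat{\gamma}$ acts trivially on $B$, the projection $p \in B$ is $\widehat{\gamma}$-fixed, so $\widehat{\gamma}$ restricts to a $2\pi$-periodic flow on $pCp$. The key preliminary step is to establish a bijection between $\beta$-KMS states on $pCp$ for $\widehat{\gamma}|_{pCp}$ and $\beta$-KMS weights $\omega$ on $C$ for $\widehat{\gamma}$ satisfying $\omega(p) = 1$, via $\omega \mapsto \omega|_{pCp}$. That the restriction lands in the correct set is immediate from the KMS condition \eqref{27-10-20c} and $\omega(p) = 1$. For surjectivity one invokes the standard extension of a KMS state on a full corner of a simple $C^*$-algebra to a KMS weight on the whole algebra: in the case $\beta = 0$ this is Proposition 4.7 of \cite{CP} (as already used in Lemma \ref{02-12-20bx}), and the analogous statement for $\beta \neq 0$ follows from the analysis of corners and KMS weights in \cite{LN} and \cite{Th2}.

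Next I would compose with Lemma \ref{26-10-20}, which identifies $\beta$-KMS weights on $C$ with traces $\tau$ on $B$ satisfying $\tau \circ \gamma = e^{-\beta}\tau$; the auxiliary condition $\omega(p) = 1$ translates into $\tau(p) = 1$. Lemma \ref{23-10-20g} then realizes any such trace as a positive homomorphism $\phi : G \to \mathbb R$ with $\phi([[u]]) = 1$, and the conditions become $\phi \circ \gamma_* = e^{-\beta}\phi$, that is, $\phi \circ \gamma_* = s^{-1}\phi$ with $s = e^\beta$ (using $\gamma_* = \rho$). At this point Lemma \ref{18-12-20b} supplies exactly the two conclusions we need: such $\phi$ exists if and only if $s \in L$, i.e.\ $\beta \in K$; and for $\beta \in K \backslash \{0\}$ the set of such $\phi$ is precisely $\{(\tau_{y,s})_* : y \in F\}$, while for $\beta = 0$ it is $\{(\tau_x)_* : x \in \Delta\}$. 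Existence of each of these $\phi$'s is verified by a direct computation from Lemma \ref{18-12-20aaa}: the map $(\tau_{y,s})_* \circ \gamma_*$ shifts indices by one and contributes an extra factor $s^{-1}$, and analogously for $(\tau_x)_*$ when $s=1$.

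Finally I would verify that the composite bijection $F \to S^{\widehat{\gamma}}_\beta(pCp)$ (respectively $\Delta \to S^{\widehat{\gamma}}_0(pCp)$) is an affine homeomorphism. Affinity is clear at every stage: the map $y \mapsto \tau_{y,s}$ is affine on $K_0$-data and hence on projections of $B$ (and by density on $B$ itself), the map $\tau \mapsto \tau \circ P$ of Lemma \ref{26-10-20} is linear, and restriction to $pCp$ is linear. Continuity follows by checking pointwise convergence on the dense $*$-subalgebra generated by $B$ and the canonical unitary $u$, using the uniform bound $\|\omega|_{pCp}\| = 1$. Both $F$ (resp.\ $\Delta$) and the weakly$^*$-compact $\beta$-KMS state simplex are compact Hausdorff, so a continuous bijection between them is automatically a homeomorphism. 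In the case $\beta = 0$ one may alternatively just cite Lemma \ref{02-12-20bx}, once it is observed that every trace state on $pCp$ is $\widehat{\gamma}$-invariant because it extends (uniquely, by the first of the Additional properties \ref{listrefx}) to a $\widehat{\gamma}$-invariant trace on $C$. The main obstacle is the extension result in the first step; the rest is a routine transcription through the machinery already built up in Sections \ref{stage} and \ref{first}.
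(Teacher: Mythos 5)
Your proposal is correct and follows essentially the same route as the paper: use the extension result from \cite{LN} and \cite{Th2} to lift $\beta$-KMS states on $pCp$ to $\beta$-KMS weights on $C$, feed these through Lemma \ref{26-10-20} to obtain traces $\tau$ on $B$ with $\tau\circ\gamma = e^{-\beta}\tau$ and $\tau(p)=1$, and then classify these via the $K_0$-level analysis (Lemma \ref{18-12-20b}, which the paper cites in its unbounded incarnation as Lemma \ref{23-04-21}) together with Lemma \ref{23-10-20g}. Your alternative handling of the $\beta=0$ case via Lemma \ref{02-12-20bx} and the first Additional property is a slightly more explicit route to the statement that every trace state on $pCp$ is $\widehat{\gamma}$-invariant, which the paper leaves more implicit.
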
 
\begin{proof} Let $\beta \in K \backslash \{0\}$ and $y \in F$. It follows from Lemma \ref{26-10-20} that we can define a $\beta$-KMS state on $p(B \times_\gamma \mathbb Z)p$ by $\tau_{y,e^{\beta}}\circ P|_{p(B \rtimes_{\gamma} \mathbb Z)p}$.  The resulting map, from $F$ to the simplex of $\beta$-KMS states for $\widehat{\gamma}$ on $p(B \times_\gamma \mathbb Z)p$,
is clearly continuous, affine and injective. To see that it is surjective, let $\omega$ be a $\beta$-KMS state for $\widehat{\gamma}$. From Theorem 2.4 in \cite{Th2}, or more precisely from the part of that theorem which follows from Remark 3.3 in \cite{LN}, combined with Lemma \ref{26-10-20}, it follows that there is trace $\tau$ on $B$ such that $\tau \circ \gamma = e^{-\beta} \tau$, $\tau(p) = 1$ and $\omega = \tau \circ P|_{p(B\rtimes_\gamma \mathbb Z)p}$. It follows from Lemma \ref{23-04-21} that $\tau_* = {\tau_{y,e^{\beta}}}_*$ on $G$ for some $y \in F$ and then from Lemma \ref{23-10-20g} that $\tau = \tau_{y,e^{\beta}}$. We conclude that $\omega$ is the image of $y \in F$ under the map we consider. This shows that the map is an affine homeomorphism. The same argument, using the traces $\tau_x$ from Lemma \ref{18-12-20aaa}, shows that the simplex of $0$-KMS states for $\widehat{\gamma}$ is affinely homeomorphic to $\Delta$.

It remains to show that there are no $\beta$-KMS states for $\widehat{\gamma}$ unless $\beta \in K$. The argument for this is almost identical to one we have given above: Assume that there is $\beta$-KMS state for $\widehat{\gamma}$. It follow from Remark 3.3 in \cite{LN} that this $\beta$-KMS state is the restriction to $p(B \rtimes_\gamma \mathbb Z)p$ of a $\beta$-KMS weight for the dual action on $B \rtimes_\gamma \mathbb Z$ and then from Lemma \ref{26-10-20} that there is a trace $\tau$ on $B$ such that $\tau \circ \gamma = e^{-\beta} \tau$ and $\tau(p)=1$. Then $\tau_* : G \to \mathbb R$ is a positive homomorphism such that $\tau_*\circ \gamma_* = e^{-\beta}\tau_*$ and $\tau_*([[u]]) =1$. It follows from Lemma \ref{23-04-21} that $\beta \in K$.
\end{proof}

Now Theorem \ref{20-12-20c}, with $K$ compact, follows from Lemma \ref{02-12-20d} and Lemma \ref{20-12-20a}.

\section{Proof of Theorem \ref{20-12-20c} when $K$ is unbounded}

Let $K$ be the closed lower bounded set of real numbers from Theorem \ref{20-12-20c} and set $L = \left\{e^{\beta} : \ \beta \in K\right\}$; a closed subset of $]0,\infty[$ bounded away from $0$. We have already covered the case where $K$ is also bounded above, so here we assume that $K$ and also $L$ is unbounded.

\subsection{The second construction}
Let $P_{\Delta}(t)$ denote the vector space of Laurent polynomials in $t$ with coefficients from $\Aff(\Delta)$. Thus $P_{\Delta}(t)$ consists of the functions $f : \Delta \times ]0,\infty[ \to \mathbb R$ of the form
\begin{equation}\label{22-04-21}
f(x,t) = \sum_{n=-\infty}^\infty a_n(x)t^n \ ,
\end{equation}
where $a_n \in \Aff(\Delta)$ for all $n$ and only finitely many $a_n$'s are non-zero. Given $f \in P_\Delta(t)$ as in \eqref{22-04-21} we set
$$
\mathbb L(f) = \max \left\{ n \in \mathbb Z: \ a_n \neq 0 \right\} \ 
$$ 
when $f \neq 0$, and let $\mathbb L(0) = -\infty$. We will refer to $\mathbb L(f)$ as the degree of $f$ and the element $a_{\mathbb L(f)} \in \Aff(\Delta)$ as the leading coefficient of $f$.

Let $F$ be a closed non-empty face in $\Delta$. For $f \in P_\Delta(t)$ we write $0 \prec f$ when there is an $\epsilon >0$ such that
$$
t^{-\mathbb L(f)}f(x,t) \geq \epsilon \  \ \ \ \forall (x,t) \in (F \times L) \cup (\Delta \times \{1\}) \ .
$$
Thus $0 \prec f$ if and only if $f$ is strictly positive on $(F \times L) \cup (\Delta \times \{1\})$ and the leading coefficient of $f$ is strictly positive on $\Delta$. Given $f,g \in P_\Delta(t)$, we write $f \prec g$ when $0\prec g-f$. As in Section \ref{first} we set $G = \oplus_{\mathbb Z} H$ and define a homomorphism $\Sigma : G \to P_\Delta(t)$ such that
$$
\Sigma(g)(x,t) = \sum_{m \in \mathbb Z} \theta(g_m)(x)t^m \ .
$$
Set
$$
G^+ = \left\{ g \in G: \ 0 \prec \Sigma(g) \right\} \cup \{0\} \ .
$$
We aim now to show that $(G,G^+)$ is a Riesz group, and hence also a dimension group in the sense of \cite{EHS}. This means that we must prove that 
\begin{itemize}
\item $G^+ + G^+ \subseteq G^+$,
\item $G^+ \cap (-G^+) = \{0\}$,
\item $ g\in G, \ n \in \mathbb N \backslash \{0\}, \ ng \in G^+ \ \Rightarrow \ g \in G^+$, 
\item $G^+ - G^+ = G$, and
\item $G$ has the Riesz interpolation property.
\end{itemize}
The first three items are easily checked. To show that $G^+ - G^+ = G$, let $g \in G$. Choose $l \in \mathbb N$ bigger than any $n \in \mathbb N$ for which $\theta(g_n)$ is non-zero and set $l =0$ if $\theta(g_n) = 0$ for all $n$. Choose $N \in \mathbb N$ such that $Nt^{l}  > \sum_{n \in \mathbb Z} \left|\theta(g_n)(x)\right|t^n$ for all $n \in \mathbb N$ and $(x,t) \in \Delta \times L$. Define $v \in G$ such that
$$
v_n = \begin{cases} 0, \ n \neq l \ ,\\ Nu , \ n = l \ . \end{cases} 
$$
Then $v, v-g \in G^+$ and $ g = v-(v-g)$. 

It remains to show that $G$ has the Riesz interpolation property. The next subsection is devoted to that.

\subsubsection{$(G,G^+)$ is a Riesz group}

For $g \in G$, set
$$
\mathbb L(g) = \max \{n \in \mathbb Z: g_n \neq 0 \}
$$
when $g \neq 0$, and $\mathbb L(0) = -\infty$. Then 
$\mathbb L(\Sigma(g)) \leq \mathbb L(g)$.

\begin{lemma}\label{22-04-21x} Let $a  \in \Aff(\Delta)$ and let $f: L \to\mathbb R$ be a continuous function such that $\lim_{t \to \infty} f(t) = 0$. Let $\epsilon > 0$ and $N  \in \mathbb N$. There is a $g \in G$ such that
$\mathbb L(g) \leq -N$ and
$$
\left| \Sigma(g)(x,t) - f(t)a(x)\right| \leq  \epsilon \ \ \forall (x,t) \in \Delta \times L \ .
$$
\end{lemma}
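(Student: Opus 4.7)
The plan is to pass to the variable $s = t^{-1}$: since $L$ is bounded away from $0$ (say $L \subseteq [\delta_0,\infty)$ with $\delta_0 > 0$) and unbounded, the set $L^{-1} := \{t^{-1} : t \in L\}$ together with $0$ is a compact subset of $[0,\delta_0^{-1}]$ having $0$ as an accumulation point. The hypothesis $\lim_{t \to \infty} f(t) = 0$ means that $\tilde f(s) := f(1/s)$ extends continuously to $L^{-1} \cup \{0\}$ with $\tilde f(0) = 0$, and in these coordinates the conclusion becomes the assertion that $\tilde f(s) a(x)$ can be approximated on $\Delta \times (L^{-1}\cup\{0\})$ by a sum $\sum_{k=0}^{D} \theta(h_k)(x)\, s^{N+k}$ with $h_k \in H$.

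The main obstacle is that any such sum necessarily vanishes to order $N$ at $s = 0$, whereas $\tilde f$ need only vanish continuously there. I would sidestep this with a cut-off. Assuming $a \neq 0$ (else take $g = 0$), pick a continuous function $f_1 : L \to \mathbb R$ of compact support with $\|f - f_1\|_\infty \leq \epsilon/(2\|a\|_\infty)$; this is possible because $f$ vanishes at infinity, so one can multiply $f$ by a continuous cut-off $\eta$ that is $1$ on $[\delta_0,T_1]$ and $0$ on $[T,\infty)$ for suitably chosen $T_1 < T$. Then $f_2 := f - f_1$ contributes at most $\epsilon/2$ to the final error uniformly on $\Delta \times L$, so it suffices to approximate $f_1(t)a(x)$ within $\epsilon/2$ by an admissible $\Sigma(g)$.

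Now $\tilde f_1(s) := f_1(1/s)$ vanishes identically on $[0,1/T] \cap (L^{-1}\cup\{0\})$, so the function $\phi(s) := s^{-N}\tilde f_1(s)$ (extended by $0$ at $s = 0$) is continuous on $L^{-1}\cup\{0\}$. By the Stone--Weierstrass theorem applied to this compact space, there is an ordinary polynomial $p(s) = \sum_{k=0}^{D} c_k s^k$ with $\sup_{s \in L^{-1}\cup\{0\}}|p(s)-\phi(s)|$ as small as desired. Multiplying through by $s^N$ and by $a(x)$ yields $\sum_{k=0}^{D} c_k a(x)\, s^{N+k}$, and since $s^N \leq \delta_0^{-N}$ on $L^{-1}$, this approximates $f_1(1/s)a(x)$ uniformly on $\Delta \times L^{-1}$ with error controlled by the Stone--Weierstrass error times $\|a\|_\infty \delta_0^{-N}$.

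Finally, I would use the density of $\theta(H)$ in $\Aff(\Delta)$ (Theorem 4.11 of \cite{GH}, recorded in Section \ref{stage}) to pick $h_k \in H$ with $\|\theta(h_k) - c_k a\|_\infty$ small enough that, taking into account the factor $|t^{-N-k}| \leq \delta_0^{-N-k}$ on $L$, the resulting error from replacing the coefficients $c_k a$ by $\theta(h_k)$ is at most $\epsilon/4$. Setting $g \in G$ by $g_{-N-k} = h_k$ for $k = 0,\dots,D$ and $g_m = 0$ otherwise yields $\mathbb L(g) \leq -N$, and a triangle inequality combining the three sources of error (the cut-off $f_2$, the Stone--Weierstrass approximation of $\phi$, and the density approximation of the coefficients $c_k a$) produces the desired bound $|\Sigma(g)(x,t) - f(t)a(x)| \leq \epsilon$ on $\Delta \times L$.
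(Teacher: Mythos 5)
Your proposal is correct and takes essentially the same approach as the paper: approximate $f$ uniformly on $L$ by a Laurent polynomial in $t$ whose top degree is at most $-N$, and then replace each coefficient $c_k\,a$ by $\theta(h_k)$ using the density of $\theta(H)$ in $\Aff(\Delta)$, with all the $t$-powers bounded by a constant on $L$. The only small difference is in how you justify the Laurent-polynomial approximation: the paper invokes Stone--Weierstrass directly for the (non-unital) subalgebra of Laurent polynomials of degree $\leq -N$ inside $C_0(L)$, whereas you first truncate $f$ to compact support and then apply the classical compact-interval Stone--Weierstrass to $s^{-N}\tilde f_1(s)$ in the variable $s=t^{-1}$; both are fine.
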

\begin{proof} By the Stone-Weierstrass theorem there is a real Laurent polynomial $Q(t) = \sum_{n= -M}^{-N} c_nt^{n}$ of degree at most $-N$ such that 
$$
\left|a(x)\right|\left|Q(t) -f(t)\right| \leq \frac{\epsilon}{2}
$$ 
for all $x \in \Delta$ and all $t \in L$. Since $\theta(H)$ is dense in $\Aff(\Delta)$ there are elements $h_n \in H$ such that
$$
\left|\theta(h_n)(x) - c_na(x) \right|t^{n} \leq \frac{\epsilon}{2(M-N)}
$$
for all $(x,t) \in \Delta \times L$ and all $n \in \{-M,-M+1,\cdots , -N\}$. Define $g \in G$ such that
$$
g_n = \begin{cases} 0 \ , \ & n \leq -M-1 \ \text{or} \ n \geq -N+1 \ ,\\ h_{n} \ , \ & \ -M \leq n \leq -N\ . \end{cases}
$$
Then $\mathbb L(g) \leq -N$ and
\begin{align*}
& \left| \Sigma(g)(x,t) - f(t)a(x)\right| \\
& \leq \left| \sum_{n=-M}^{-N} \theta(h_n)(x)t^{n} - \sum_{n=-M}^{-N} c_na(x)t^{n}\right| + \left| Q(t)a(x) - f(t)a(x)\right| \leq \epsilon \ .   
\end{align*}
\end{proof}

\begin{lemma}\label{05-05-21a} Let $g  \in G$ and let $F \subseteq \mathbb Z$ be a finite set. Assume that $g_n = 0, \ |n| \geq J\geq 1$. Let $N \in \mathbb N$ such that $J-N -d < -J$ for all $d \in F \cup \{0\}$. Let $q$ be a real-valued Laurent polynomial of degree at most $-N$. For any $\epsilon > 0$ there is an element $g' \in G$ such that $g'_n = g_n$ when $n\geq  -J$, and 
$$
\left|t^{-d}(1+q(t))\Sigma(g)(x,t) - t^{-d}\Sigma(g')(x,t) \right| \leq \epsilon
$$
for all $d \in F$ and all $(x,t) \in \Delta \times L$. 
\end{lemma}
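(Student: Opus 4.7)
The plan is to write $q(t)\Sigma(g)(x,t)$ as an explicit Laurent polynomial in $t$ with coefficients in $\Aff(\Delta)$, observe from the hypothesis that all of its powers of $t$ lie strictly below $-J$, approximate each of the coefficients by an element of $\theta(H)$, and then check that the error is controlled uniformly in $t \in L$ because, after multiplying by $t^{-d}$, every exponent that appears is still strictly negative (so bounded on $L$).

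Concretely, write $q(t) = \sum_{m=-M}^{-N} c_m t^m$ and
$\Sigma(g)(x,t) = \sum_{|n| < J} \theta(g_n)(x)t^n$.
Expanding the product gives
\[
q(t)\Sigma(g)(x,t) \ = \ \sum_{k=-M-J+1}^{J-N-1} \tilde b_k(x) t^k,
\]
where $\tilde b_k \in \Aff(\Delta)$. Because $J - N - d < -J$ for every $d \in F \cup \{0\}$ (integers), the highest exponent satisfies $k \leq J-N-1 \leq -J-2$, so in particular $k \leq -J-1$. Using that $\theta(H)$ is norm-dense in $\Aff(\Delta)$, pick for each such $k$ an element $h_k \in H$ with $\sup_{x \in \Delta}|\theta(h_k)(x) - \tilde b_k(x)| \leq \delta$ for a small $\delta > 0$ to be fixed below, and define $h \in G$ by letting $h_k$ be these chosen elements and $h_n = 0$ otherwise. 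Then $h_n = 0$ for $n \geq -J$, so $g' := g + h \in G$ satisfies $g'_n = g_n$ for $n \geq -J$.

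Now estimate: since $(1+q(t))\Sigma(g)(x,t) - \Sigma(g')(x,t) = q(t)\Sigma(g)(x,t) - \Sigma(h)(x,t)$, for each $d \in F$ we obtain
\[
t^{-d}\bigl[(1+q(t))\Sigma(g)(x,t) - \Sigma(g')(x,t)\bigr] \ = \ \sum_{k=-M-J+1}^{J-N-1}\bigl(\tilde b_k(x) - \theta(h_k)(x)\bigr) t^{k-d}.
\]
The hypothesis gives $k - d \leq (J-N-1) - d < -J - 1$ for every $k$ in this range and every $d \in F$, so all exponents $k-d$ are strictly negative. Since $L$ is a closed subset of $(0,\infty)$ bounded away from $0$ by some $t_0 > 0$, the function $t \mapsto t^{k-d}$ is bounded on $L$ by $t_0^{k-d}$ for each such $k,d$. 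Hence the sum is bounded in absolute value by $\delta \cdot C$, where
\[
C \ = \ \sum_{k=-M-J+1}^{J-N-1} \sum_{d \in F} t_0^{k-d}
\]
is a finite constant depending only on $g$, $q$, $F$ and $L$. Choosing $\delta$ so small that $\delta C \leq \epsilon$ completes the argument.

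I do not expect a significant obstacle here; the essential content is the degree accounting showing that every exponent $k-d$ appearing after multiplication by $t^{-d}$ remains strictly negative, which is exactly what the hypothesis $J-N-d < -J$ is engineered to give. The only point requiring care is the uniformity of the estimate on the unbounded set $L$, and this is handled by the observation that negative powers of $t$ are bounded on $L$ because $L$ is bounded away from $0$.
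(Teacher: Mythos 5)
Your proof is correct and follows essentially the same route as the paper: expand $q(t)\Sigma(g)(x,t)$ as a Laurent polynomial with $\Aff(\Delta)$-coefficients, observe that the hypothesis $J-N-d<-J$ forces every exponent (even after multiplication by $t^{-d}$) to be strictly negative and strictly below $-J$, then approximate each coefficient using the density of $\theta(H)$ in $\Aff(\Delta)$ and control the error uniformly on $L$ because negative powers of $t$ are bounded on a set bounded away from zero. The only cosmetic difference is that you define $g'=g+h$ while the paper sets $g'_n=h_n$ on the support of the product, but these coincide since $g_n=0$ there.
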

\begin{proof} By assumption we can write
$$
q(t)\Sigma(g)(x,t) = \sum_{n= B}^A a_n(x) t^n \ ,
$$
where $a_n \in \Aff(\Delta)$ for all $n$ and $B < A \leq J-N$. Since $\theta(H)$ is dense in $\Aff(\Delta)$ and since $J-N -d < -J < 0$ we can find $h_n \in H, \ B \leq n \leq A$, such that
$$
\left| t^{-d} q(t)  \sum_{n= B}^A a_n(x) t^n -  t^{-d}\sum_{n= B}^A \theta(h_n)(x) t^n\right| \leq \epsilon
$$
for all $d \in F$ and all $(x,t) \in \Delta \times L$. Define $g' \in G$ such that $g'_n = h_n, \ B \leq n \leq A$, and $g'_n = g_n$ for $n \notin [B,A]$. Then
\begin{align*}
&\left|t^{-d}(1+q(t))\Sigma(g)(x,t) - t^{-d} \Sigma(g')(x,t)\right| \\
&= \left| t^{-d} q(t)  \sum_{n= B}^A a_n(x) t^n -  t^{-d}\sum_{n= B}^A \theta(h_n)(x) t^n\right| \leq \epsilon \ 
\end{align*}
for all $d \in F$ and all $(x,t) \in \Delta \times L$. 
\end{proof}

\begin{lemma}\label{06-05-21a} Let $F : \Delta \times L \to \mathbb R$ be a continuous function such that $\Delta \ni x \mapsto F(x,t)$ is affine for all $t \in L$ and $\lim_{t \to \infty} F(x,t) = 0$ uniformly in $x$. Let $J \in \mathbb N$. For every $\delta > 0$ there is a $g \in G$ such that $\mathbb L(g) \leq  -J$, and
$$
\left|\Sigma(g)(x,t) - F(x,t)\right| \leq \delta
$$
for all $(x,t) \in \Delta \times L$.
\end{lemma}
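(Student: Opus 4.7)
The plan is to reduce to Lemma \ref{22-04-21x}: first approximate $F$ uniformly on $\Delta \times L$ by a finite sum $\sum_{j=1}^M a_j(x) f_j(t)$ with $a_j \in \Aff(\Delta)$ and $f_j \in C(L)$ satisfying $\lim_{t\to\infty} f_j(t)=0$, then apply Lemma \ref{22-04-21x} term-by-term with degree bound $J$ and error budget $\delta/(2M)$, and finally add the resulting elements of $G$. Since $\mathbb L$ behaves subadditively, the sum will still satisfy $\mathbb L(g) \le -J$, and the total error will be controlled by the triangle inequality.

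To produce the separable approximation, I would first use the uniform decay hypothesis to pick $T>0$ with $|F(x,t)| \le \delta/4$ for all $x \in \Delta$ and all $t \in L$ with $t \ge T$. The set $K_0 := L \cap [\inf L, T]$ is compact, and $F$ is uniformly continuous on $\Delta \times (L \cap [\inf L, T+1])$. I would cover $K_0$ by finitely many relatively open sets $V_j \subseteq L \cap [\inf L, T+1]$ chosen so small that the oscillation of $F$ on $\Delta \times V_j$ is at most $\delta/4$ and whose closures lie strictly inside $L$ (so that continuous extensions by zero exist). Pick $t_j \in V_j$ and a partition of unity $\{\psi_j\}$ subordinate to $\{V_j\}$, extended by zero to all of $L$; then each $\psi_j$ is continuous and compactly supported in $L$ (in particular $\lim_{t\to\infty}\psi_j(t)=0$), with $\sum_j \psi_j = 1$ on $K_0$ and $\sum_j \psi_j \le 1$ everywhere. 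Setting $a_j(x) := F(x,t_j)$, which lies in $\Aff(\Delta)$ because $F$ is affine in $x$, a short case analysis (splitting $t \in K_0$ from $t \in L \setminus K_0$ and using $|F(x,t)| \le \delta/4$ for $t \ge T$) yields
$$
\Bigl|F(x,t) - \sum_{j=1}^M a_j(x)\psi_j(t)\Bigr| \le \delta/2 \qquad \forall (x,t) \in \Delta \times L \ .
$$

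With this in hand, Lemma \ref{22-04-21x} applied to each pair $(a_j, \psi_j)$ with tolerance $\delta/(2M)$ and the given $J$ produces $g^{(j)} \in G$ with $\mathbb L(g^{(j)}) \le -J$ and $|\Sigma(g^{(j)})(x,t) - a_j(x)\psi_j(t)| \le \delta/(2M)$. Taking $g = \sum_j g^{(j)}$ we get $\mathbb L(g) \le -J$, and the triangle inequality gives $|\Sigma(g)(x,t) - F(x,t)| \le \delta$ on $\Delta \times L$.

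I don't expect a serious obstacle. The only mildly delicate point is the separable approximation step, where one must simultaneously arrange that the $t$-factors vanish at infinity (so that Lemma \ref{22-04-21x} can be applied) and that the approximation remains uniformly good on the unbounded tail of $L$; both are handled by the cutoff at $T$ together with the vanishing of the $\psi_j$'s outside a compact set.
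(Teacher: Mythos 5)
Your proof is correct and follows essentially the same route as the paper's: cut off at a radius $T$ where $F$ is small, approximate $F$ on the compact part of $L$ by a finite sum $\sum_j a_j(x)\psi_j(t)$ with $\psi_j$ a compactly supported partition of unity, and finish by applying Lemma~\ref{22-04-21x} termwise with a suitably small error budget. The only (minor, legitimate) variation is that you feed $a_j = F(\cdot,t_j) \in \Aff(\Delta)$ directly into Lemma~\ref{22-04-21x}, whereas the paper first replaces $F(\cdot,t_j)$ by a nearby $\theta(h_j) \in \theta(H)$ — a step that is unnecessary since Lemma~\ref{22-04-21x} already accepts arbitrary elements of $\Aff(\Delta)$.
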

\begin{proof} Let $\kappa > 0$. There is then an $R > 0$ such that $\left|F(x,t)\right| \leq  \kappa$ for all $x \in \Delta$ and all $t \geq R$. For each $t' \in L \cap \left]0, R\right]$, there is an element $h_{t'} \in H$ such that
$$
\left|F(x,t) - \theta(h_{t'})(x)\right| < \kappa \ 
$$
for all $x \in \Delta$ and all $t$ in an open neighborhood of $t'$. By compactness of $L \cap \left]0, R\right]$, we can therefore find open sets $U_i$ in $\mathbb R$ and elements $h_i \in H, i = 1,2, \cdots , n$, such that
$$
\left|F(x,t) - \theta(h_i)(x)\right| \leq \kappa
$$
for all $x \in \Delta \times (L\cap U_i), \ i =1,2,\cdots , n$, and $L \cap \left]0, R\right] \subseteq \bigcup_{i=1}^n U_i$. Let $\varphi_i : \mathbb R \to \mathbb R , \ i = 1,2,\cdots, n$, be continuous non-negative compactly supported functions such that $\supp \varphi_i  \subseteq U_i$ for all $i$, $\sum_{i=1}^n \varphi_i \leq 1$ and $\sum_{i=1}^n \varphi_i(x) = 1$ for all $x \in  L \cap \left]0, R\right]$. By Lemma \ref{22-04-21x} there are elements $g_i \in G$ such that $\mathbb L(g_i) \leq -J$ and
$$
\left| \Sigma(g_i)(x,t) - \varphi_i(t)\theta(h_i)(x)\right| \leq \frac{\kappa}{n} 
$$
for all $(x,t) \in \Delta \times L$. Set $g = \sum_{i=1}^n g_i$ and note that $\mathbb L(g) \leq -J$. Furthermore,
\begin{align*}
&\left| \Sigma(g)(x,t) - F(x,t)\right| \\
& \leq \left| (1- \sum_{i=1}^n\varphi_i(t)) F(x,t) \right| + \sum_{i=1}^n \left|\Sigma(g_i)(x,t) - \varphi_i(t)F(x,t)\right| \\
& \leq \kappa + \sum_{i=1}^n \left(\left|\Sigma(g_i)(x,t) - \varphi_i(t)\theta(h_i)(x)\right| + \varphi_i(t) \left|\theta(h_i)(x) - F(x,t)\right|\right) \\
& \leq 3 \kappa
\end{align*} 
for all $(x,t) \in \Delta \times L$.
\end{proof}

\begin{lemma}\label{21-04-21} $(G,G^+)$ is a dimension group.
\end{lemma}
\begin{proof}
 As observed above it remains only to establish the Riesz interpolation property. Let $u^i,v^j \in G, \ i,j \in \{1,2\}$, satisfy that $u^i \leq v^j$ in $G$ for all $i,j$. We must find $g' \in G$ such that $u^i \leq g' \leq v^j$ for all $i,j$. If $u^{i'} = v^{j'}$ for some $i',j' \in \{1,2\}$, set $g' = v^{j'}$. Then $u^i \leq g' \leq v^j$ for all $i,j$ and we are done. Assume therefore that $\Sigma(u^i) \prec \Sigma(v^j)$ for all $i,j \in \{1,2\}$. 

\begin{obs}\label{05-05-21} There is a $g \in G$, an $R > 1$ and an $\epsilon > 0$, such that 
$$
t^{-l_i}(\Sigma(g)(x,t) - \Sigma(u^i)(x,t)) > \epsilon 
$$
for all $(x,t) \in F \times (L \cap [R,\infty))$, $i=1,2$, where 
$$
l_i = \mathbb L( \Sigma(g) - \Sigma(u^i)) \ ,
$$ 
and such that
$$
t^{-l'_j}(\Sigma(v^j)(x,t) - \Sigma(g)(x,t)) > \epsilon 
$$
for all $(x,t) \in F \times (L \cap [R,\infty))$, $j = 1,2$, where 
$$
l'_j = \mathbb L( \Sigma(v^j) - \Sigma(g)) \ .
$$ 
\end{obs}

\begin{proof} Choose $N \in \mathbb N$ such that $u^i_k = v^j_k = 0$ for all $i,j \in \{1,2\}$ when $|k|\geq N$. Define $\overline{u}^i \in \oplus_{k=0}^\infty \theta(H)$ such that
$$
\overline{u}^i_k = \theta(u^i_{N-k}) \ ,
$$
and $\overline{v}^j \in \oplus_{k=0}^\infty \theta(H)$ such that
$$
\overline{v}^j_k = \theta(v^j_{N-k}) \ .
$$
We consider here $\theta(H)$ is a partially ordered group in the order inherited from the strict order of $\Aff(F)$; i.e. $\theta(H)^+$ consists of $0$ and the elements $f \in \theta(H)$ such that $f(x) > 0$ for all $x \in F$. It is well-known that $\theta(H)$ then has the Riesz interpolation property, cf. Lemma 3.1 and Lemma 3.2 in \cite{EHS}. The direct sum $\oplus_{k=0}^\infty \theta(H)$, in turn, is given the corresponding lexicographic order $<_{lex}$, where $(h_k)_{k=0}^{\infty} <_{lex} (h'_k)_{k=0}^\infty$ means that $h_k = h'_k$ for all $k$, or $h_{k_0} < h'_{k_0}$ in $\theta(H)$ where $k_0 = \min \{ k : \ h_k \neq h'_k \}$. Since $ \Sigma(u^i) \prec \Sigma(v^j)$ for all $i,j \in \{1,2\}$ we have that
$$
\overline{u}^i <_{lex} \overline{v}^j
$$
for all $i,j \in \{1,2\}$. By Theorem 3.10 in \cite{E2} the group $\oplus_{k=0}^\infty \theta(H)$ has the Riesz interpolation property in the  lexicographic order\footnote{Theorem 3.10 in \cite{E2} is very general and the argument concerning Riesz groups uses the decomposition property. However, a major step in the proof of Lemma 3.2 in \cite{BEK} gives the arguments directly for the interpolation property, albeit in the colexicographic order.} and there is therefore an element $h'' \in \oplus_{k=0}^\infty \theta(H)$ such that $\overline{u}^i <_{lex} h'' <_{lex} \overline{v}^j$ for all $i,j \in \{1,2\}$. If $h'' \notin \{\overline{u}^1,\overline{u}^2,\overline{v}^1,\overline{v}^2\}$ we define $g \in G$ such that $g_k = 0$ when $k > N$ and such that $\theta(g_k) = h''_{N-k}$ when $k \leq N$, and $g$ will then have the desired properties because the leading coefficients of $\Sigma(g)-\Sigma(u^i)$ and $\Sigma(v^j) -\Sigma(g), \ i,j =1,2$, will all be strictly positive. If $h'' = \overline{u}^{1}$ it follows that $\overline{u}^{2} <_{lex}\overline{u}^1$ which implies that $\Sigma(u^2) = \Sigma(u^1)$ or that there is an $R' > 1$ and a $\delta > 0$ such that
$$
t^{-l}(\Sigma(u^1)(x,t) - \Sigma(u^2)(x,t))  \geq \delta > 0
$$
for all $(x,t) \in F \times [R',\infty)$ where $l = \mathbb L\left(\Sigma(u^1) - \Sigma(u^2)\right)$. In both these cases we choose, as we can, an element $d \in H$ such that
$$
0 < \theta(d)(x)t^{-N-1} < \frac{1}{2}(\Sigma(v^j)(x,t) - \Sigma(u^1)(x,t))
$$
for all $(x,t) \in (F \times L)\cup (\Delta \times \{1\})$, $j = 1,2$, and define $g \in G$ such that 
$$
g_k = \begin{cases} u^1_k , & \ -N \leq k \leq N, \\ d , & \ k = -N -1, \\ 0, & \ k\notin [-N-1,N]  \  . \end{cases} \
$$ 
Then $g$ will have the desired property.
The cases where $h'' \in\{\overline{u}^2,\overline{v}^1,\overline{v}^2\}$ are handled in a similar way.
\end{proof}

We fix now $g$, $R$ and $\epsilon$ as in Observation \ref{05-05-21} and choose $J \in \mathbb N$ such that $g'_n = 0$ when $|n| > J$ and $g' \in \left\{g,v^1,v^2,u^1,u^2\right\}$. And we choose $J' \in \mathbb N$ such that
$$
J' > 2J-d
$$
for all $d \in \{0, l_1,l_2,l'_1,l'_2\}$.

\begin{obs}\label{06-05-21} There is an element $g^{(1)} \in G$ and an $\epsilon_1 > 0$ such that $\mathbb L(g^{(1)}) \leq -J'$ and
$$
t^{-d}\Sigma(u^i)(x,t) + \epsilon_1 < t^{-d}\Sigma(g^{(1)})(x,t) < t^{-d} \Sigma(v^j)(x,t) -\epsilon_1
$$
for all $(x,t) \in (F \times L \cap ]0,R+1]) \cup (\Delta \times \{1\})$ and all $d \in \{l_1,l_2,l'_1,l'_2\}$.
\end{obs}
\begin{proof} Since $\Sigma(u^i)(x,t)  < \Sigma(v^j)(x,t)$ for all $(x,t)$ in the compact set $(F \times L \cap ]0,R+1]) \cup (\Delta \times \{1\})$ there is a $\delta > 0$ such that
$$
t^{-d}\Sigma(u^i)(x,t) + \delta <  t^{-d} \Sigma(v^j)(x,t) - \delta 
$$
for all $(x,t) \in (F \times L \cap ]0,R+1]) \cup (\Delta \times \{1\})$ and all $d \in \{l_1,l_2,l'_1,l'_2\}$. By using the Riesz interpolation property of $\Aff(F)$ and $\Aff(\Delta)$ as is in the proof of Lemma \ref{18-12-20} we construct a continuous function 
$$
a: \Delta \times (L \cap  ]0,R+1]) \to \mathbb R
$$ 
such that $x \mapsto a(x,t)$ is affine for all $t \in  L \cap  ]0,R+1]$ and 
$$
t^{-d}\Sigma(u^i)(x,t) + \delta < t^{-d} a(x,t) < t^{-d} \Sigma(v^j)(x,t) - \delta 
$$
for all $(x,t) \in (F \times L \cap ]0,R+1]) \cup (\Delta \times \{1\})$ and all $d \in \{l_1,l_2,l'_1,l'_2\}$. Let $r$ be the largest element in $L \cap  ]0,R+1]$ and extend $a$ to $\Delta \times L$ such that
$$
a(x,t) = \begin{cases} (r+1 - t)a(x,r), \ & t \in [r,r+1] \ ,\\ 0, \ & t \geq r+1 \ . \end{cases}
$$
It follows from Lemma \ref{06-05-21a} that there is an element $g^{(1)} \in G$ such that $\mathbb L(g^{(1)}) \leq -J'$ and
$$
\left|t^{-d} \Sigma(g^{(1)})(x,t) - t^{-d}a(x,t)\right| \leq \frac{\delta}{2}
$$
for all $(x,t)\in (F \times L \cap ]0,R+1]) \cup (\Delta \times \{1\})$ and all $d \in \{l_1,l_2,l'_1,l'_2\}$. Set $\epsilon_1 = \frac{\delta}{2}$.
\end{proof}

Let $\psi :]0,\infty[ \to [0,1]$ be the continuous function such that $\psi(t) = 1, t\leq R$, $\psi(t) = 0, \ t \geq R+1$, and $\psi$ is linear on $[R,R+1]$. Choose $\kappa > 0$ such that
$$
4 \kappa < \min \{\epsilon,\epsilon_1\} \ 
$$
where $\epsilon >0$ comes from Observation \ref{05-05-21} and $\epsilon_1 >0$ from Observation \ref{06-05-21}.
Find a real-valued Laurent polynomial $q$ of degree no more than $-J'$ such that
\begin{equation}\label{06-05-21g}
\left|t^{-d}\psi(t)\Sigma(g^{(1)})(x,t) -t^{-d}q(t)\Sigma(g^{(1)})(x,t)\right| \leq \kappa
\end{equation}
for all $(x,t) \in \Delta \times L$ and all $d\in \{l_1,l_2,l'_1,l'_2\}$. This is possible because $\psi|_L \in C_0(L)$, so that $\psi|_L$ can be approximated by Laurent polynomials of degree as most $-J'$, and
$$
\sup_{(x,t)\in \Delta \times L} \left|t^{-d}\Sigma(g^{(1)})(x,t)\right| < \infty \ 
$$
since $\mathbb L(g^{(1)}) - d \leq 0$. Since 
$$
\sup_{(x,t) \in \Delta \times L}  t^{-l_i}\left(\Sigma(g)(x,t) - \Sigma(u^i)(x,t)\right) < \infty 
$$
and
$$
\sup_{(x,t) \in \Delta \times L}  t^{-l'_j}\left(\Sigma(v^j)(x,t) -\Sigma(g)(x,t)\right) < \infty 
$$
for all $i,j$, we can also arrange that 
\begin{equation}\label{07-05-21b}
\left| (q(t) - \psi(t))t^{-l_i}\left(\Sigma(g)(x,t) - \Sigma(u^i)(x,t)\right)\right|  < \kappa
\end{equation}
and
\begin{equation}\label{07-05-21c}
\left| (q(t) - \psi(t))t^{-l'_j}\left(\Sigma(v^j)(x,t) -\Sigma(g)(x,t)\right)\right|  < \kappa
\end{equation}
for all $(x,t) \in \Delta \times L$ and all $i,j$.

We apply now Lemma \ref{05-05-21a} to get an element $g^{(2)} \in G$ such that $g^{(2)}_n = g_n$ for all $-J \leq n \leq J$, $g^{(2)}_n = 0, \ n\geq J+1$, and
\begin{equation}\label{06-05-21d}
\left|t^{-d}(1-q(t))\Sigma(g)(x,t) - t^{-d}\Sigma(g^{(2)})(x,t)\right| \leq \kappa
\end{equation}
for all $(x,t) \in \Delta \times L$, and all $d \in \{0,l_1,l_2,l'_1,l'_2\}$. 

\begin{obs}\label{07-05-21} There is an element $g^{(3)} \in G$ such that $\mathbb L(g^{(3)}) < -J$ and
\begin{equation}\label{06-05-21e}
\left|t^{-d}q(t)\Sigma(g^{(1)})(x,t) - t^{-d}\Sigma(g^{(3)})(x,t)\right| \leq \kappa
\end{equation}
for all $(x,t) \in \Delta \times L$ and all $d \in \{l_1,l_2,l'_1,l'_2\}$.
\end{obs}
\begin{proof} Write 
$$
q(t)\Sigma(g^{(1)})(x,t) = \sum_{-M}^{-2J'}a_n(x)t^n
$$ 
where $M > 2J'$. Let $\kappa' > 0$. Since $\theta(H)$ is dense in $\Aff(\Delta)$ we can find $g^{(3)}\in G$ such that $g^{(3)}_n = 0$ when $n \notin [-M,-2J']$ and
$$
\left|\theta(g^{(3)}_n)(x) - a_n(x)\right| \leq \kappa'
$$
for all $x \in \Delta$ and all $-M \leq n \leq -2J'$. Then
\begin{align*}
&\left|t^{-d}q(t)\Sigma(g^{(1)})(x,t) - t^{-d}\Sigma(g^{(3)})(x,t)\right|\\
& \leq \kappa' (M-2J')\sup\left\{ t^n : \ -M-d\leq n \leq - 2J' -d,\ t \in L \right\}
\end{align*}
for all $(x,t) \in \Delta \times L$  and all $d \in \{l_1,l_2,l'_1,l'_2\}$. Since $-2J'-d < 0$,
$$
\sup\left\{ t^n : \ -M-d\leq n \leq - 2J' -d,\ t \in L \right\}
$$
is finite for all $d$ and we can therefore arrange that \eqref{06-05-21e} holds.
\end{proof}

%\begin{color}{red}
 Set $g^{(4)} = g^{(3)}+ g^{(2)}$. Since $g^{(4)}_n = g_n$ for all $-J \leq n \leq J$ and $g^{(4)}_n= 0, \ n \geq J+1$, we find that
$$
\mathbb L(\Sigma(g^{(4)}) -\Sigma(u^i)) = \mathbb L(\Sigma(g) -\Sigma(u^i)) = l_i
$$
and
$$
\mathbb L(\Sigma(v^j) - \Sigma(g^{(4)})) = \mathbb L(\Sigma(v^j) - \Sigma(g)) = l'_j
$$
for all $i,j$. Furthermore, for all $(x,t) \in \Delta \times L$, we find by using \eqref{06-05-21e}, \eqref{06-05-21d}, \eqref{06-05-21g} and \eqref{07-05-21b}, that

\begin{align*}
& t^{-l_i}( \Sigma(g^{(4)})(x,t) -\Sigma(u^i)(x,t)) \\
& = t^{-l_i}\left( \Sigma(g^{(3)})(x,t)  + \Sigma(g^{(2)})(x,t) -\Sigma(u^i)(x,t)\right)\\
& \geq t^{-l_i}(q(t)\Sigma(g^{(1)})(x,t) + (1-q(t))\Sigma(g)(x,t) - \Sigma(u^i)(x,t))  - 2\kappa  \\
& = t^{-l_i}q(t)(\Sigma(g^{(1)})(x,t) - \Sigma(u^i)(x,t)) \\
& \ \ \ \ \ \ \ \ \ \ \ \ \ \ \ \ \ \ + (1-q(t))\left(\Sigma(g)(x,t) - \Sigma(u^i)(x,t)\right) - 2\kappa \\
& \geq  t^{-l_i}\psi(t)(\Sigma(g^{(1)})(x,t) - \Sigma(u^i)(x,t)) \\
& \ \ \ \ \ \ \ \ \ \ \ \ \ \ \ \ \ \ + (1-q(t)) t^{-l_i}\left(\Sigma(g)(x,t) - \Sigma(u^i)(x,t)\right) - 3\kappa \  \\
& \geq  \psi(t)t^{-l_i}(\Sigma(g^{(1)})(x,t) - \Sigma(u^i)(x,t)) \\
& \ \ \ \ \ \ \ \ \ \ \ \ \ \ \ \ \ \ + (1-\psi(t)) t^{-l_i}\left(\Sigma(g)(x,t) - \Sigma(u^i)(x,t)\right) - 4\kappa \
\end{align*}
for all $(x,t) \in \Delta \times L$ and $i =1,2$. Now the properties of $g$ and $g^{(1)}$ stipulated in Observation \ref{05-05-21} and Observation \ref{06-05-21}, and the definition of $\psi$, imply that
\begin{align*}
& \psi(t)t^{-l_i}(\Sigma(g^{(1)})(x,t) - \Sigma(u^i)(x,t)) \\
& \ \ \ \ \ \ \ \ \ \ \ \ \ \ \ \ \ \ + (1-\psi(t)) t^{-l_i}\left(\Sigma(g)(x,t) - \Sigma(u^i)(x,t)\right) - 4\kappa \\
& \geq \min \{\epsilon,\epsilon_1\} - 4 \kappa \ 
\end{align*}
for all $(x,t) \in (F \times L) \cup (\Delta \times \{1\})$ and $i =1,2$. Essentially identical estimates show that 
$$
t^{-l'_j}\left( \Sigma(v^j)(x,t) - \Sigma(g^{(4)})(x,t)\right) \geq  \min \{\epsilon,\epsilon_1\} - 4 \kappa 
$$
for all $(x,t) \in (F \times L) \cup (\Delta \times \{1\})$ and $j =1,2$, and since $ \min \{\epsilon,\epsilon_1\} - 4 \kappa > 0$ we conclude that $u^i \leq g^{(4)} \leq v^j$ in $G$ for all $i,j$.
\end{proof}

Recall that an ideal $J$ in $(G,G^+)$ is a subgroup $J$ of $G$ such that $J = J \cap G^+ - J\cap G^+$ and $0 \leq h \leq g \in J \Rightarrow h \in J$. We shall not need to identify all ideals in $(G,G^+)$; it suffices here to establish the following:

\begin{lemma}\label{22-04-21d} Let $\rho$ be the automorphism \eqref{rho} and let $J$ be a non-zero ideal in $(G,G^+)$ such that $\rho(J) =J$. It follows that $J = G$.
\end{lemma}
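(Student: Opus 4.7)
My plan is to first reduce to the case where $\mathbb{L}(\Sigma(g)) = 0$, then to dominate $[[u]]$ by a positive integer multiple of $g$, and finally to dominate every $[[h]]$ with $h \in H^+$ by a multiple of $[[u]]$. The $\rho$-invariance of $J$ will then give the conclusion.

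Pick any $g \in J \cap G^+ \setminus \{0\}$ and set $l = \mathbb{L}(\Sigma(g))$. Since $\Sigma(\rho^l(g))(x,t) = t^{-l}\Sigma(g)(x,t)$ and $t>0$, the element $\rho^l(g)$ lies in $J$ (by $\rho$-invariance) and in $G^+$ (the same $\epsilon > 0$ witnessing $0 \prec \Sigma(g)$ also witnesses $0 \prec \Sigma(\rho^l(g))$), and its image under $\Sigma$ has leading degree $0$. Replacing $g$ by $\rho^l(g)$ I may therefore assume $\Sigma(g)(x,t) = a(x) + \sum_{m<0} a_m(x) t^m$ with $a = \theta(g_0) \in \Aff(\Delta)$. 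Letting $t \to \infty$ inside the unbounded set $L$ in the inequality $\Sigma(g)(y,t) \geq \epsilon$ valid for $y \in F$, I obtain $a(y) \geq \epsilon$ for all $y \in F$.

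Next I claim that $Mg - [[u]] \in G^+$ for sufficiently large $M \in \mathbb{N}$. The Laurent polynomial $M\Sigma(g) - 1$ has degree-zero coefficient $Ma - 1$, which is a non-zero element of $\Aff(\Delta)$ for $M$ large, so its leading degree is $0$; verifying $0 \prec M\Sigma(g) - 1$ thus reduces to producing a uniform lower bound $M\Sigma(g)(x,t) - 1 \geq \epsilon'$ on $(F \times L) \cup (\Delta \times \{1\})$. On the compact pieces $\Delta \times \{1\}$ and $F \times (L \cap \,]0,T])$ the strictly positive continuous function $\Sigma(g)$ is bounded below by a positive constant. On the non-compact tail $F \times (L \cap [T,\infty))$ the error term $\sum_{m<0} a_m(y) t^m$ tends to zero uniformly in $y \in F$ as $t \to \infty$ (since $g$ has only finitely many non-zero coefficients), so $\Sigma(g)(y,t) \geq a(y)/2 \geq \epsilon/2$ once $T$ is large. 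Choosing $M$ large enough delivers $Mg - [[u]] \in G^+$, hence $0 \leq [[u]] \leq Mg \in J$, and the hereditary property of $J$ gives $[[u]] \in J$.

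Finally, for any $h \in H^+$, simplicity of the dimension group $H$ forces $\theta(h)$ to be strictly positive on the compact space $\Delta$, hence bounded above. For $n \in \mathbb{N}$ larger than $\max_\Delta \theta(h)$ the function $\Sigma(n[[u]] - [[h]])(x,t) = n - \theta(h)(x)$ is constant in $t$ and strictly positive on $\Delta$, so $n[[u]] - [[h]] \in G^+$; combined with $[[u]] \in J$ this yields $[[h]] \in J$ by the hereditary property. Writing any $h \in H$ as $h_1 - h_2$ with $h_i \in H^+$ gives $[[h]] \in J$ for every $h \in H$, and $\rho$-invariance then gives $\rho^n([[h]]) \in J$ for all $n \in \mathbb{Z}$. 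These elements generate $G = \oplus_{\mathbb{Z}} H$ as an abelian group, so $J = G$. I expect the main obstacle to be the non-compactness of $F \times L$, which necessitates the initial normalization to leading degree zero together with the asymptotic analysis $\Sigma(g)(\cdot,t) \to a$ uniformly on $F$ as $t \to \infty$.
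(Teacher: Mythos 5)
Your proof is correct and follows essentially the same route as the paper's: pick a non-zero $g \in J \cap G^+$, dominate a positive element supported on a single $\mathbb Z$-coordinate by an integer multiple $Ng$, invoke the hereditary property of $J$ to place that coordinate element in $J$, and then use $\rho$-invariance to capture all shifts and hence generate $G$. The paper's version is marginally more direct — it places $h^{(i')}$ with $i' = \mathbb L(\Sigma(g))$ into $J$ in one step by estimating $Nt^{-i'}\Sigma(g)(x,t) - \theta(h)(x) \geq 1$, rather than first normalizing $g$ via $\rho^{l}$ and then routing through $[[u]]$ — but these are cosmetic variants of the same argument.
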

\begin{proof} Let $0 \neq g \in J \cap G^+$. For $i \in \mathbb Z$ and $h \in H$ define $h^{(i)} \in G$ such that $(h^{(i)})_i = h$ and $(h^{(i)})_j = 0, \ j \neq i $. Let $h \in H^+ \backslash \{0 \}$.  Then $h^{(i)} \in G^+$ for all $i \in \mathbb Z$. Set $i' = \mathbb L(\Sigma(g))$. Since 
$$
t^{-i'} \Sigma(g)(x,t) \geq \epsilon > 0
$$
on $(F \times {L}) \cup (\Delta \times \{1\})$ for some $\epsilon > 0$, there is an $N \in \mathbb N$ such that
$$
Nt^{-i'} \Sigma(g)(x,t) - \theta(h)(x) \geq 1
$$
for all $(x,t) \in (F \times {L}) \cup (\Delta \times \{1\})$, which implies that 
$$
0 \leq h^{(i')} \leq Ng 
$$
in $G^+$ and hence that $h^{(i')} \in J$. Since $\rho(J) = J$ and $\rho({h}^{(i)}) = {h}^{(i-1)}$ for all $i$, it follows that ${h}^{(i)} \in J$ for all $i\in \mathbb Z$. The elements $h^{(i)}, h \in H^+, i \in \mathbb Z$, generate $G$ and we conclude therefore that $J = G$.
\end{proof}

\begin{lemma}\label{08-05-21} $(G,G^+)$ has large denominators.
\end{lemma}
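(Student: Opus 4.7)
The plan is to exploit density of $\theta(H)$ in $\Aff(\Delta)$ to approximate ``$a/(n+1)$'' componentwise inside $G$. Given $a\in G^+\setminus\{0\}$ and $n\in \mathbb N$, I would write $\Sigma(a)(x,t)=\sum_{k=-N}^\ell \theta(a_k)(x)t^k$ with $\ell=\mathbb L(a)$ and $\theta(a_\ell)$ strictly positive on all of $\Delta$, then use the density to pick $b_k\in H$ with
$$\sup_{x\in\Delta}\bigl|(n+1)\theta(b_k)(x)-\theta(a_k)(x)\bigr|\le \delta$$
for a small $\delta>0$ to be fixed at the end. The candidate element $b\in G$ is the one with components $(b_k)_{-N\le k\le\ell}$ and zero elsewhere.

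The crux is to verify $a-nb\in G^+$ for $\delta$ small enough. Writing
$$\theta(a_\ell-nb_\ell)=\tfrac{1}{n+1}\theta(a_\ell)+\tfrac{n}{n+1}\bigl(\theta(a_\ell)-\theta((n+1)b_\ell)\bigr),$$
one sees that the leading coefficient of $\Sigma(a-nb)$ stays strictly positive on $\Delta$ as soon as $\delta<\tfrac{1}{n}\min_\Delta\theta(a_\ell)$. For strict positivity of $\Sigma(a-nb)$ itself on $(F\times L)\cup(\Delta\times\{1\})$ I would split at a threshold $T$: on the piece $t\le T$, which is compact since $L$ is bounded away from $0$, a uniform estimate transfers the positivity of $\tfrac{1}{n+1}\Sigma(a)$ (provided by $a\in G^+$) to $\Sigma(a-nb)$ provided $\delta$ is taken small depending on $T$; on the piece $t\ge T$, the leading term $t^\ell\theta(a_\ell-nb_\ell)$ dominates the $O(t^{\ell-1})$ tail once $T$ is large, so positivity is automatic from the leading-coefficient bound just established.

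For the upper bound $a\le mb$, note that $\theta(b_\ell)$ is close to $\tfrac{1}{n+1}\theta(a_\ell)$ and hence bounded below by a positive constant on $\Delta$; therefore for $m$ sufficiently large the leading coefficient $m\theta(b_\ell)-\theta(a_\ell)$ of $\Sigma(mb-a)$ is strictly positive on $\Delta$, and the same splitting argument yields strict positivity of $\Sigma(mb-a)$ on $(F\times L)\cup(\Delta\times\{1\})$: on $t\le T$ because $\Sigma(a)$ is bounded there while $m\Sigma(b)$ has a linear-in-$m$ positive lower bound, and on $t\ge T$ by leading-term dominance.

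The main obstacle is the non-compactness of $F\times L$, but the leading-coefficient clause built into the definition of $G^+$ is designed precisely to handle the behavior at $t=\infty$: once one cuts at a finite $T$, the problem reduces to positivity on a compact region together with an asymptotic comparison, both of which are routine.
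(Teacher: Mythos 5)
Your construction differs from the paper's. You build $b$ by approximating $a/(n+1)$ coefficient by coefficient, producing a genuine Laurent polynomial, which forces you to track the full $t$-dependence of $t^{-\ell}\Sigma(a-nb)$ and $t^{-\ell}\Sigma(mb-a)$ on the noncompact set $(F\times L)\cup(\Delta\times\{1\})$ via a threshold split. The paper instead takes $b = h^{(\ell)}$, a monomial supported at the single index $\ell = \mathbb L(\Sigma(a))$, with $h\in H$ chosen so that $0 < \theta(h)(x) < \epsilon/n$ for all $x\in\Delta$. Then $t^{-\ell}\Sigma(h^{(\ell)})(x,t) = \theta(h)(x)$ is independent of $t$, so both inequalities $n h^{(\ell)}\leq a\leq m h^{(\ell)}$ reduce to comparing two $t$-independent quantities, and the asymptotic analysis at $t\to\infty$ disappears entirely. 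Your route is a natural first idea but requires more bookkeeping precisely because your $t^{-\ell}\Sigma(b)$ is not constant in $t$.

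There is also a real gap. You require $\delta < \tfrac{1}{n}\min_\Delta\theta(a_\ell)$ and, in the second half, that $\theta(b_\ell)$ be bounded below by a positive constant on $\Delta$; both rest on the assertion that $\theta(a_\ell)$ is strictly positive on all of $\Delta$. But $a\in G^+$ does not yield this. The defining inequality $t^{-\ell}\Sigma(a)(x,t)\geq\epsilon$ is imposed only on $(F\times L)\cup(\Delta\times\{1\})$, and letting $t\to\infty$ along $L$ gives $\theta(a_\ell)(y)\geq\epsilon$ for $y\in F$ only; at a point of $\Delta\setminus F$ the leading coefficient may vanish or be negative, in which case $\min_\Delta\theta(a_\ell)\leq 0$ and no admissible $\delta$ exists. (The parenthetical remark after the definition of $\prec$, asserting that the leading coefficient is strictly positive ``on $\Delta$'', is imprecise in exactly this respect; both the $\epsilon$-inequality and the Riesz-interpolation argument that follows work with positivity of leading coefficients on $F$, via the strict order on $\Aff(F)$.) Your argument is repaired by replacing $\Delta$ by $F$ wherever leading-coefficient positivity is used: this is legitimate because the noncompact part of $(F\times L)\cup(\Delta\times\{1\})$ lies entirely over $F$, so leading-term dominance for $t\geq T$ is only needed there, where $\theta(a_\ell)\geq\epsilon>0$. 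With that change your route goes through, but the monomial choice $b=h^{(\ell)}$ remains the shorter path.
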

\begin{proof} We adopt the notation from the proof of Lemma \ref{22-04-21d}. Let $g \in G^+\backslash \{0\}$ and $n \in \mathbb N$ be given. Set $l = \mathbb L(\Sigma(g))$. There is an $\epsilon > 0$ such that $t^{-l}\Sigma(g)(x,t) \geq \epsilon$ for all $(x,t) \in (F \times L) \cup (\Delta \times \{1\})$. Since $\theta(H)$ is dense in $\Aff(\Delta)$, there is an element $h \in H$ such that $0<\theta(h)(x) < \frac{\epsilon}{n}$ for all $x \in \Delta$. Then $0 \leq n h^{(l)} \leq g$ in $G$. On the other hand, there is an $m \in \mathbb N$ so big that
$$
t^{-l}\Sigma(g)(x,t) + 1 \leq m \theta(h)(x)
$$
for all $(x,t) \in \Delta \times L$ and then $g \leq mh^{(l)}$.  
\end{proof}

\subsubsection{Completing the proof when $K$ is unbounded}

From here the arguments are very similar, and many identical to those from the compact case.

It follows from Lemma \ref{21-04-21} and \cite{EHS} that there is an AF algebra $B$ whose $K_0$-group and dimension range is isomorphic to $(G ,G^+)$ and from \cite{E1} we conclude that $B$ is stable and that there is an automorphism $\gamma$ of $B$ such that $\gamma_* = \rho$ under the identification $K_0(B) = G$. By Lemma \ref{08-05-21} and Lemma \ref{28-10-20} we can choose $\gamma$ such that it has the same additional properties as listed in \ref{listrefx}. Set $C = B \rtimes_\gamma \mathbb Z$. No power of $\gamma$ is inner since $\gamma_*^k \neq \id_G$ for $k\neq 0$ and it follows from Lemma \ref{22-04-21d} that $B$ is $\gamma$-simple, and then from \cite{Ki1} that $C$ is simple. 

The traces $\tau_{y,t}$ and $\tau_x$ on $B$ can be defined by the same formulas as in Lemma \ref{18-12-20aaa}, and the following is an exact copy of Lemma \ref{18-12-20b}, but the different assumptions on $K$ and the different orderings of $G$ necessitate some slight changes to the proof.

\begin{lemma}\label{23-04-21} Let $\phi : G \to \mathbb R$ be a positive homomorphism such that $\phi([[u]])= 1$.  Assume that $\phi \circ \gamma_* = s^{-1} \phi$ for some $s \in ] 0,\infty[$. Then $s \in L$ and when $s \neq 1$ it follows that $\phi = {\tau_{y,s}}_*$ for some $y \in F$, and when $s =1$ it follows that $\phi = {\tau_x}_*$ for some $x \in \Delta$.
\end{lemma}
\begin{proof}
 In the following we denote by $C_0(\Delta \times L)$ and $C_0(L)$ the set of continuous real-valued functions vanishing at infinity on $\Delta \times L$ and $L$, respectively, and we denote by $\mathcal A(\Delta \times L)$ the elements $f$ of $C_0(\Delta \times L)$ with the property that $x \mapsto f(x,t)$ is affine for all $t \in L$; a closed subspace of $C_0(\Delta \times L)$. Set
$$
G' = \left\{ g\in G: \ \mathbb L(g) \leq -1 \right\} = \left\{ g \in G : \ g_n = 0 \ \forall n \geq 0 \right\} \ ;
$$
a subgroup of $G$. Then $\Sigma(G') \subseteq \mathcal A(\Delta \times L)$, and we claim that there is continuous linear map $\phi' : \mathcal A(\Delta \times {L}) \to \mathbb R$ such that $\phi = \phi' \circ \Sigma$ on $G'$. For this and later purposes we establish the following

\begin{obs}\label{07-05-21h} Let $h \in H$ and $r \in \mathbb R$ be such that $\theta(h)(x) < r$ for all $x \in \Delta$. It follows that $\phi([[h]]) < r$.
\end{obs} 
To prove this let $k\in \mathbb Z$ and $l \in \mathbb N$ be such that $\theta(h)(x) < \frac{k}{l} \leq r$ for all $x \in \Delta$. Then $l[[h]] \leq k [[u]]$ in $G$ and hence $l\phi([[h]]) \leq k$, from which the conclusion follows.

Assume $g \in G$ and that $\Sigma(g) = 0$. Let $N \in \mathbb N$. Since $\theta(H)$ is dense in $\Aff(\Delta)$ there is a $c \in H$ such that $0 < \theta(c)(x) < N^{-1}$ for all $x \in \Delta$ and Observation \ref{07-05-21h} implies that $0 \leq \theta([[c]]) \leq N^{-1}$. Since $\mathbb L(\Sigma(\pm g + [[c]])) = 0$ and $\Sigma(\pm g + [[c]])(x,t) \geq \epsilon > 0$, where $\epsilon = \min_{y \in \Delta} \theta(h)(y)$, it follows that $\pm g+[[c]]\in G^+$. We find in this way that $-\frac{1}{N} \leq \phi(g) \leq \frac{1}{N}$. Letting $N \to \infty$ we conclude that $\phi(g) =0$, and it follows that there is a homomorphism $\phi' : \Sigma(G') \to \mathbb R$ such that $\phi' \circ \Sigma = \phi$. Let $f \in \Sigma(G')$; say $f = \Sigma(g)$, where $g\in G'$, and let $k,l \in \mathbb N$ be natural numbers such that $|f(x,t)| < \frac{k}{l}$ for all $(x,t) \in \Delta \times {L}$. Since $\mathbb L\left(\Sigma(k[[u]] \pm lg)\right) = 0$ this implies that $k[[u]] \pm l g \in G^+$ and hence
$$
0  \ \leq  \ \phi(k[[u]] \pm l g) \ = \ k \pm l \phi'(f) \ .
$$
It follows that $\left|\phi'(f)\right| \leq \frac{k}{l}$, proving that $\phi'$ is Lipshitz continuous on $\Sigma(G')$. Since $\Sigma(G')$ is dense in $\mathcal A(\Delta \times {L})$ by Lemma \ref{06-05-21a} it follows that $\phi'$ extends by continuity to a linear map $\phi' : \mathcal A(\Delta \times {L}) \to \mathbb R$ such that $\phi'\circ \Sigma(g) = \phi(g)$ for $g \in G'$.

Let $T : \mathcal A(\Delta \times {L}) \to  \mathcal A(\Delta \times {L})$ be the operator
$$
T(\psi)(x,t) = t^{-1}\psi(x,t)  \ . 
$$ 
The equality 
\begin{equation}\label{18-20-12dd}
s^{-1}\phi' = \phi' \circ T \ 
\end{equation}
is established in the same way as in the proof of Lemma \ref{18-12-20b}. When $a \in \Aff(\Delta)$ and $f \in C_0({L})$ we denote by $a \otimes f$ the function $\Delta \times {L} \ni (x,s) \mapsto a(x)f(s)$. Assume $a(x)f(t) \geq 0$ for all $(x,t) \in (F \times L) \cup (\Delta \times \{1\})$. We claim that 
\begin{equation}\label{07-05-21e}
\phi'(a \otimes f) \geq 0 \ .
\end{equation} 
To establish this, let $\epsilon > 0$. We use Lemma \ref{22-04-21x} to get an element $g \in G'$ such
\begin{equation}\label{07-05-21g}
\left|a(x)f(t) - \Sigma(g)(x,t) \right| \leq \frac{\epsilon}{2}
\end{equation}
for all $(x,t) \in \Delta \times L$. Using the density of $\theta(H)$ in $\Aff(\Delta)$ we choose $h \in H^+$ such that $\epsilon \leq \Sigma(h)(x) < 2 \epsilon$ for all $x \in \Delta$. Then $\mathbb L(g+[[h]]) = 0$ and
$$
\Sigma(g+ [[h]])(x,t) \geq \frac{\epsilon}{2}
$$
for all $(x,t) \in (F \times L) \cup (\Delta \times \{1\})$, implying that $g+[[h]] \in G^+$ and hence that $\phi(g+[[h]]) = \phi'(\Sigma(g)) + \phi([[h]]) \geq 0$. It follows from Observation \ref{07-05-21h} that $\phi([[h]]) \leq 2\epsilon$, and we infer that $ \phi'(\Sigma(g)) \geq -2\epsilon$. Combined with \eqref{07-05-21g} it follows that $\phi'(a \otimes f) \geq -3\epsilon$, proving \eqref{07-05-21e}. In the same way as in the proof of Lemma \ref{18-12-20b} we deduce from \eqref{18-20-12dd} and \eqref{07-05-21e} that $s \in L$ and that there is a real number $\lambda(a)$ for each $a \in \Aff (\Delta)$ such that 
\begin{equation}\label{21-12-20dx}
\phi'(a \otimes f) \ = \ \lambda(a)f(s) 
\end{equation}
for all $f \in C_0(L)$. The resulting map $a \mapsto \lambda(a)$ is clearly linear and positive, and $\lambda(1) > 0$ since $\phi' \neq 0$. Then $a \mapsto \lambda(1)^{-1}\lambda(a)$ is a state of $\Aff(\Delta)$ and there is therefore an $x_0 \in \Delta$ such that $\lambda(a) = \lambda(1)a(x_0)$ for all $a \in \Aff(\Delta)$. Since the elements of $\left\{a \otimes f : \ a \in \Aff(\Delta) , \ f \in C_{0}({L}) \right\}$ span a dense set in $\mathcal A(\Delta \times {L})$ we find that
\begin{equation}\label{18-12-20ee}
\phi'(h) \ = \ \lambda(1)h(x_0,s) \ \ \ \ \forall h \in \mathcal A(\Delta \times {L}) \ .
\end{equation}
Note that $\gamma_*([[u]]) \in G'$ and $\Sigma(\gamma_*([[u]]))(x,t) = t^{-1}$. Using \eqref{18-12-20ee} and the assumptions on $\phi$ we find that
$$
\lambda(1)s^{-1} = \phi'(\Sigma(\gamma_*([[u]]))) = \phi(\gamma_*([[u]])) = s^{-1}\phi([[u]]) = s^{-1} \ ,
$$
implying that $\lambda(1) = 1$. Thus
$$
\phi (g) = \phi'(\Sigma(g)) = \sum_{m \in \mathbb Z} \theta(g_m)(x_0)s^m  
$$
when $g \in G'$. When $g \in G$ is a general element there is an an $n \in \mathbb N \cup \{0\}$ such that $g' = \gamma_*^n(g) \in G'$
and then
$$ 
\phi(g) = s^n \phi(g') =s^n\sum_{m \in \mathbb Z}\theta(g_{m+n})(x_0)s^{m} = \sum_{m \in \mathbb Z}\theta(g_m)(x_0)s^{m} \ .
$$
It remains only to show that $x_0 \in F$ when $s \neq 1$; which follows from Lemma \ref{21-12-20} in the same way as in the proof of Lemma \ref{18-12-20b}.
\end{proof}

 The rest of the proof of Theorem \ref{20-12-20c} for the unbounded case is identical to the proof from Section \ref{compact} for the compact case.

\section{Proof of Corollary \ref{15-12-20d} and Corollary \ref{27-10-20a}}\label{xxx}

We will combine Corollary \ref{20-12-20d} with the methods used in Appendix 12 of \cite{Th3} and Section 6 of \cite{Th4}. Given the UHF algebra $U$ in Corollary \ref{15-12-20d} we write
$$
U \simeq U_1 \otimes U_2 \ ,
$$
where $U_1$ and $U_2$ are both (infinite dimensional) UHF-algebras. Given the collection $\mathbb I$ of intervals and simplexes $S_I, I \in \mathbb I$, in Corollary \ref{15-12-20d} we get from Corollary 5.7 in \cite{Th4} a generalized gauge action $\alpha^1$ on $U_1$ with the following properties:
\begin{itemize}
\item For each $I \in \mathbb I$ and each $\beta \in I\backslash \{0\}$ there is a closed face $F_I$ in $S^{\alpha^1}_{\beta}$ which is strongly affinely isomorphic to $S_I$. 
\item For each $\beta \neq 0$ and each $\beta$-KMS state $\omega \in S^{\alpha^1}_{\beta}$ there is a unique norm-convergent decomposition
$$
\omega = \sum_{I \in \mathbb I_{\beta}} \omega_I \ 
$$
where $\omega_I \in \mathbb R^+F_I$. 
\end{itemize}
That $\alpha^1$ is a generalized gauge action means that there is a Bratteli diagram $\Br$ and a map $F : \Br_{Ar} \to \mathbb R$ defined on the set $\Br_{Ar}$ of arrows in $\Br$ which define $\alpha^1$ in the following way. Let $\mathcal P_n$ denote the set of paths of length $n$ in $\Br$ starting at the top vertex. Extend the map $F$ to $\mathcal P_n$ such that
$$
F(\mu) = \sum_{i=1}^n F(a_i) \ ,
$$
when $\mu = a_1a_2\cdots a_n$ is made up of the arrows $a_i$. The Bratteli diagram $\Br$ is chosen such that  there are finite dimensional $C^*$-subalgebras $\mathbb F_n$ in $U_1$ spanned by a set of matrix units $E^n_{\mu,\mu'}, \ \mu,\mu' \in \mathcal P_n$, such that $\mathbb F_n \subseteq \mathbb F_{n+1}$ for all $n$ and
$$
U_1 = \overline{\bigcup_n \mathbb F_n} \ . \
$$
The flow $\alpha^1$ is defined such that
$$
\alpha^1_t\left( E^n_{\mu,\mu'}\right) = e^{i (F(\mu)-F(\mu'))t} E^n_{\mu,\mu'} \ .
$$
See \cite{Th4}. In order to ensure that $\alpha^1$ is $2\pi$-periodic it is necessary to arrange that $F$ only takes integer values. That this is possible follows by inspection of the proof in \cite{Th4}; in fact, the only step in the proof where this is not automatic is in the proof of Lemma 5.3 in \cite{Th4}, where some real numbers $t_k$ are chosen. The only crucial property of these numbers is that they must be sufficiently big and they may therefore be chosen to be natural numbers. The resulting potential $F$ will then be integer-valued. Let $\alpha^2$ be a flow on $U_2$ with the properties specified for $\alpha$ in Corollary \ref{20-12-20d} and set
$$
\alpha_t = \alpha^1_t \otimes \alpha^2_t \ ,
$$
which we will argue defines a flow on $U_1 \otimes U_2$ with the properties stated in Corollary \ref{15-12-20d}. Since a $\beta$-KMS state for $\alpha$ will restrict to a $\beta$-KMS state for $\alpha^2$ on the tensor factor $U_2$, it follows from  Corollary \ref{20-12-20d} that there are no $\beta$-KMS states for $\alpha$ unless $\beta \in K$. Let $\beta \in K$ and let $\omega_\beta$ be the unique $\beta$-KMS state for $\alpha^2$. It remains only to show that the map $\omega \ \mapsto \ \omega \otimes \omega_\beta$ is an affine homeomorphism from $S^{\alpha^1}_\beta$ onto $S^{\alpha}_\beta$. Note that only the surjectivity is not obvious, and only when $\beta \neq 0$. Consider therefore a $\beta$-KMS state $\psi \in S^{\alpha}_\beta$, $\beta \neq 0$. Let $x,y \in U_1, \ b \in  U_2$ such that $x$ is $\alpha^1$-analytic. Then $x \otimes 1$ is $\alpha$-analytic and $\alpha_{i \beta}(x \otimes 1) = \alpha^1_{i\beta} (x) \otimes 1$. Hence
\begin{align*}
&\psi(xy \otimes b) = \psi((x \otimes 1)(y \otimes b)) = \psi((y\otimes b) \alpha_{i \beta}(x\otimes 1)) \\
& = \psi((y\otimes b) (\alpha^1_{i \beta}(x) \otimes 1)) = \psi(y\alpha^1_{i \beta}(x) \otimes b) \ .
\end{align*}
Thus, if $b \geq 0$ and $b \neq 0$, the map $U \ni x \mapsto \psi(x \otimes b)$ is a $\beta$-KMS functional for $\alpha^1$, and hence
\begin{align*}
&\psi(E^n_{\mu, \mu'} \otimes b) = \psi( \alpha^1_{i\beta}(E^n_{\mu, \mu'})\otimes b)) =  e^{-\beta (F(\mu) - F(\mu'))} \psi(E^n_{\mu, \mu'} \otimes b) \ .
\end{align*}
Since $\beta \neq 0$ it follows that $\psi(E^n_{\mu,\mu'} \otimes b) \ = \  0$ when $F(\mu) \neq F( \mu')$, and hence that $\psi$ factorises through the map $Q \otimes \id_{U_2}$, where $Q : U_1 \to U_1^{\alpha^1}$ is the conditional expectation onto the fixed point algebra $U_1^{\alpha^1}$ of $\alpha^1$. Let $a \geq 0$ in $U_1$. Then
$$
U_2 \ni b \ \mapsto \ \psi(a \otimes b) = \psi(Q(a) \otimes b)
$$
is a $\beta$-KMS functional for $\alpha^2$ since $\psi$ is a $\beta$-KMS state for $\alpha$ and $Q(a)$ is fixed by $\alpha^1$. The uniqueness of $\omega_\beta$ implies therefore that
$$
\psi(a \otimes b) = \lambda(a)\omega_{\beta}(b)
$$
for some $\lambda(a) \in \mathbb R$ and all $b \in U_2$. It is then straightforward to show that $\lambda(a)$ can be defined for all $a \in U_1$, resulting in a $\beta$-KMS state $\lambda$ for $\alpha^1$. This completes the proof of Corollary \ref{15-12-20d}. 

 Corollary \ref{27-10-20a} follows by choosing the family $\mathbb I$ of intervals in Corollary \ref{15-12-20d} in an appropriate way, e.g. as the collection of all bounded intervals with rational endpoint, and also the Choquet simplexes $S_I$ in an appropriate way. See Section 6 in \cite{Th4}.

\end{document}